\numberwithin{equation}{section}
\numberwithin{figure}{section}
\newtheoremstyle{thm-style-oskari}
{7pt}      % Space above
{7pt}      % Space below
{\itshape} % Body font
{}         % Indent amount (empty = no indent, \parindent = para indent)
{\scshape} % Thm head font
{.}        % Punctuation after thm head
{.5em}     % Space after thm head: " " = normal interword space; 
\theoremstyle{thm-style-oskari}
    \newtheorem{theorem}{Theorem}[section]
    \newtheorem{proposition}[theorem]{Proposition}
    \newtheorem{corollary}[theorem]{Corollary}
    \newtheorem{lemma}[theorem]{Lemma}
    \newtheorem{definition}[theorem]{Definition}% howto make rm-style text inside definitions
    \newtheorem{example}[theorem]{Example}
    \newtheorem{convention}[theorem]{Convention}
    \newtheorem{remark}[theorem]{Remark}
\newenvironment{Proof}[1][Proof]{\begin{proof}[\sc{#1}]}{\end{proof}}
\newcommand{\bels}[2] {
        \begin{equation} \label{#1} \begin{split} 
                #2 
        \end{split} \end{equation}
        }
\newcommand{\bes}[1]{
        \begin{equation*}  \begin{split} 
                #1 
        \end{split} \end{equation*}
        }
\definecolor{olivegreen}{rgb}{0,0.6,0.1}
\newcommand{\bs}[1]{\boldsymbol{\mathrm{#1}}} %bold
\newcommand{\bbm}{\mathbbm} %blackboard bold
\renewcommand{\rm}{\mathrm} %upright
\renewcommand{\cal}{\mathcal} 
\newcommand{\scr}{\mathscr} 
\renewcommand{\frak}{\mathfrak} 
\newcommand{\ol}[1]{\overline{#1} \!\,} %overline
\newcommand{\wh}{\widehat}
\newcommand{\wt}{\widetilde}
\newcommand{\eps}{\epsilon}
\newcommand{\ord} {\mathcal{O}}
\renewcommand{\d}{\partial}
\renewcommand{\P}{\mathbb{P}}
\newcommand{\E}{\mathbb{E}}
\newcommand{\R}{\mathbb{R}}
\newcommand{\C}{\mathbb{C}}
\newcommand{\N}{\mathbb{N}}
\newcommand{\D}{\mathbb{D}}
\newcommand{\ee}{\mathrm{e}} %\newcommand{\me}{\mathrm{e}}
\newcommand{\ii}{\mathrm{i}} %\newcommand{\mi}{\mathrm{i}}
\newcommand{\dd}{\mathrm{d}}
\newcommand{\p}[1]{({#1})}
\newcommand{\pb}[1]{\bigl({#1}\bigr)}
\newcommand{\pB}[1]{\Bigl({#1}\Bigr)}
\newcommand{\pbb}[1]{\biggl({#1}\biggr)}
\newcommand{\cB}[1]{\Bigl\{{#1}\Bigr\}}
\newcommand{\abs}[1]{\lvert #1 \rvert}
\newcommand{\absb}[1]{\big\lvert #1 \big\rvert}
\newcommand{\norm}[1]{\lVert #1 \rVert}
\newcommand{\normb}[1]{\big\lVert #1 \big\rVert}
\newcommand{\avg}[1]{\langle #1 \rangle}
\newcommand{\scalar}[2]{\langle{#1} \mspace{2mu}, {#2}\rangle}
\DeclareMathOperator{\diag}{diag}
\DeclareMathOperator{\tr}{Tr}
\DeclareMathOperator{\supp}{supp}
\DeclareMathOperator{\re}{Re}
\DeclareMathOperator{\im}{Im}
\DeclareMathOperator{\dist} {dist}                
\DeclareMathOperator*{\spec}{Spec}						%Spectrum
\newcommand{\1} {\mspace{1 mu}}
\newcommand{\2} {\mspace{2 mu}}
\newcommand{\msp}[1] {\mspace{#1 mu}}
\newcommand{\mtwo}[2]
{
\left(
\begin{array}{cc}
#1 
\\
#2
\end{array}
\right)
}
\newcommand{\mfour}[4]
{
\left(
\begin{array}{cccc}
#1 
\\
#2
\\
#3
\\
#4
\end{array}
\right)
}
\begin{document}
\title{Randomly coupled differential equations with elliptic correlations}
\author{
\begin{minipage}{0.3\textwidth}
\begin{center}
L\'aszl\'o Erd{\H o}s\footnotemark[1]  \\
\footnotesize {IST Austria}\\
{\text{lerdos@ist.ac.at}}
\end{center}
\end{minipage}
\begin{minipage}{0.3\textwidth}
 \begin{center}
Torben Kr\"uger\footnotemark[2]  \\
\footnotesize 
{FAU Erlangen-N\"urnberg \& University of Copenhagen}\\
{\text{torben.krueger@fau.de}}
\end{center}
\end{minipage}
\begin{minipage}{0.3\textwidth}
 \begin{center}
David Renfrew\footnotemark[3] \\
\footnotesize 
{Binghamton University}\\   
{\text{renfrew@math.binghamton.edu}}  
\end{center}
\end{minipage}
}

\footnotetext[1]{Partially supported by ERC Advanced Grant RANMAT No.\ 338804}
\footnotetext[2]{Partially supported by VILLUM FONDEN research grant no. 29369}
\footnotetext[3]{Supported by Austrian Science Fund (FWF): M2080-N35}

\date{\today}
%\date{} % empty argument removes the date!

\maketitle
\thispagestyle{empty} %%% no page numbers!!!

\begin{abstract} 
We consider the long time asymptotic behavior of a large system of  $N$ linear differential equations
with random coefficients. We allow for general elliptic  correlation structures   among the coefficients,
thus we substantially generalize our  previous work \cite{EKR} that was  restricted to the 
independent case. In particular, we  analyze a recent model in the theory of neural networks  \cite{correlationsMarti}
that specifically focused on the effect  of the  distributional asymmetry in the  random connectivity matrix $X$. We rigorously  prove and 
 slightly correct the explicit formula from \cite{MC} on the time decay as a function of the asymmetry parameter.
 Our main tool is an asymptotically precise formula for the normalized trace of  $f(X) g(X^*)$,
 in the large $N$ limit,  where $f$ and $g$ are analytic functions. 
\end{abstract}

\noindent
{\bf Keywords:}  Non-Hermitian random matrix, time evolution of neural networks, partially symmetric correlation \\
{\bf AMS Subject Classification (2010):} \texttt{60B20}, \texttt{15B52}.

\section{Introduction}
A basic model in theoretical neuroscience \cite{SCS} to describe the {evolution} of a network of $N$ fully 
connected neurons with activation variables $u=(u_i)_{i=1}^N\in \C^N$ is
the system of linear differential equations
\begin{equation}\label{ode}
   \partial_t u_t = -u_t+ gXu_t,
\end{equation}
where $X\in \C^{N\times N}$ is the connectivity matrix and $g>0$ is a coupling parameter.
The model \eqref{ode} already assumes that the input-output transfer function has been linearized;
a common mathematical simplification of the original nonlinear model, made to study its stability properties  
\cite{VS, correlationsMarti, PhysRevLett.97.188104}.  The matrix $X$ is random and in the simplest model it is drawn 
from the {\it Ginibre ensemble}, i.e. $x_{ij}$ are independent, identically distributed (i.i.d.)  centered
complex Gaussian random variables with the convenient normalization $\E |x_{ij}|^2=\frac{1}{N}$.
This normalization keeps the spectrum bounded uniformly in $N$.
Recent experimental data,  however, indicate that in reality reciprocal connections are overrepresented \cite{overrep1,overrep2}, i.e.  $x_{ij}$ and $x_{ji}$ cannot be modeled by  independent
variables. A natural way to incorporate correlations is to keep independence among the pairs
$(x_{ij}, x_{ji})$ for different index pairs $\{i,j\}$, but assume that 
\begin{equation}\label{asym}
   \E x_{ij}x_{ji} = \varrho\2 \E |x_{ij}|^2 = \frac{\varrho}{N}
\end{equation}
for every $i\ne j$. The {\it correlation coefficient} $\varrho$ is a complex parameter of the model; $\varrho=0$ corresponding to the {\it fully asymmetric} case, while $\varrho=1$ is the {\it fully symmetric} case.
For example, the Gaussian unitary ensemble (GUE), where $X=X^*$ is Hermitian, is fully symmetric, while
the Ginibre ensemble is fully asymmetric.  The intermediate case $0<|\varrho|<1$ is called 
the  {\it elliptic  ensemble}. 

Depending on the coupling $g$,  the solution to \eqref{ode} typically grows or decays exponentially
for large times. However, there is a critical value of $g$ 
where the solution has a power law decay. Critical tuning has been the main focus for this model in
 the neuroscience literature, see e.g. \cite{HENNEQUIN20141394, nonnormal, lim2013balanced, MacNeil}
as this case exhibits complex patterns.
The decay  exponent characteristically depends on
the symmetry properties of $X$. In fact, in  \cite{ CM, MC} Chalker and Mehlig showed that the expectation of the squared $\ell^2$-norm of the solution, $\|u(t) \|_2^2$ decays as $t^{-3/2}$ in
the $\varrho=1$ fully symmetric case, while a much slower decay of $t^{-1/2}$ occurs in the fully asymmetric and {\it partially symmetric}  (or elliptic)  cases, $|\varrho|<1$.
Their analysis was mathematically not rigorous, as they used uncontrolled Feynman diagrammatic perturbation theory.  The rigorous proof
in the  two extreme
$\varrho=0$ and $\varrho=1$ cases were given in \cite{EKR}.  Motivated by the recent more detailed but still non-rigorous  analysis of the partially symmetric cases
\cite{correlationsMarti}, in the current article we give the complete  mathematical proof of all remaining intermediate cases. We also use this
opportunity to correct an error in the final formula \cite[Eq. (116)]{MC}, see \eqref{eq:correct}.

Following the original insights of \cite{MC, CM}, we consider the quantity $\frac{1}{N}\tr f(X) g(X^*)$, where $f$ and $g$ are analytic
functions outside of the spectrum of $X$. For the solution to \eqref{ode} we will later choose $f(x) = g(x)= e^{t(gx-1)}$.
 By the {\it circular}   \cite{Girko1984,tao2010,bordenave2012} and {\it elliptic laws} \cite{Girkoelliptic,nguyen2015elliptic}  it
is well known that the spectrum of $X$ becomes approximately deterministic in the large $N$ limit. 

With our approach, we also treat models of {\it elliptic-type} that are 
much more general  than the i.i.d. case with asymmetric correlation \eqref{asym}, studied in \cite{MC, CM}.  Namely, we  allow the matrix element pairs $(x_{ij}, x_{ji})$ 
 to have different joint distributions  for different index pairs  $(i,j)$. 
In particular,  our methods are not restricted to the Gaussian case, the distribution of $x_{ij}$ can be arbitrary (with some finite moment conditions).
 Finally, we compute  $\frac{1}{N}\tr f(X) g(X^*)$ with high probability and not just its expectation, as done in  \cite{ CM, MC,correlationsMarti}.

Informally, our main theorem (Theorem~\ref{thm:mainthmgen})  states that, as $N \to \infty$ 
\begin{equation}
\label{contint}
\frac{1}{N}\tr f(X)g(X^*) \to  \left( \frac{1}{2\ii \pi} \right)^2 \oint_{\gamma} d \zeta_1  \oint_{\ol \gamma} d \ol{\zeta}_2 f(\zeta_1) g({\ol{\zeta_2}}) K(\zeta_1,\zeta_2), 
\end{equation}
where $K(\zeta_1,\zeta_2)$ is a deterministic function that depends on the covariance structure of $X$. The contour $\gamma$ is outside of the spectrum, which we characterize. This formula immediately
follows from  contour integration  once we show that the trace of the  product of resolvents $\frac{1}{N}\tr (X-\zeta_1)^{-1} (X^*-\ol{ \zeta}_2)^{-1}$
 converges to  $K(\zeta_1,\zeta_2)$.  Using Girko's Hermitization trick \cite{Girko1984} and a linearization, we reduce this problem to understanding the derivative
of a single resolvent of a larger Hermitian matrix with a specific block structure.
 Spectral analysis of large Hermitian matrices has been thoroughly developed in the last years; we use
 the most recent results  on the optimal {\it local laws} outside of the pseudospectrum
  as well as on the corresponding Dyson equation  \cite{AjankiCorrelated, EKS, AEKN}; see Section \ref{sec:Hermitian random matrices and the matrix Dyson equation} for more details on the Dyson equation and optimal local laws.  
  In particular, the function $K(\zeta_1, \zeta_2)$ can be expressed in terms
  of the solution to the  {\it extraspectral Dyson equation} (see \eqref{definition of b}) that provides a deterministic approximation to the resolvent $(X-\zeta)^{-1}$ when $\zeta$ is away from the  spectrum of $X$.
In some cases, e.g. in the case of identical variances, this solution can be computed explicitly, thus recovering
all regimes studied in \cite{MC, CM}.

 For general non-Hermitian matrices, product of resolvents involves the {\it overlap} of right and left eigenvectors of $X$,
 a basic concept in the works of Chalker and Mehlig. However, overlaps are poorly understood beyond
 the Gaussian case, \cite{BourgadeDubachoverlap, Fyodorov2018, WaltersShannon}. Our method is more robust, as it uses 
 Hermitized 
 resolvents to circumvent this problem.
We use a similar approach to \cite{EKR}, where random matrices with independent entries were studied. 
A major new obstacle in the analysis lies in the singularities of kernel $K(\zeta_1, \zeta_2)$, which give the leading contribution to the double contour integral
  \eqref{contint}.  In the fully independent setup, i.e. $\varrho= 0$ of \cite{EKR} we have the explicit formula $K(\zeta_1, \zeta_2) = N^{-2}
  \sum_{ij}  (\zeta_1\ol{\zeta}_2-S)^{-1}_{ij}$,   
  where $S=(s_{ij})$ is the matrix of variances, $s_{ij} = \E |x_{ij}|^2$. 
   The  genuinely   elliptic-type  models with nontrivial 
   correlation between $x_{ij}$ and $x_{ji}$   do not allow for such a simple expression for a structural reason: the Hermitized problem 
  does not factorize as in \cite{EKR}. In fact, the key a-priori  bound on the linear stability operator associated with
  the Dyson equation at energy zero in \cite{EKR} was a direct calculation using a symmetrization transformation from \cite{AjankiCorrelated}.
  The main novel analysis  in the current paper yields a replacement for this direct estimate via  studying 
  the newly introduced {\it extraspectral Dyson equation (EDE)}, see \eqref{definition of b} later. %

  {\it Notations.} The space of $N\times N$ matrices is equipped with the standard inner product, $\langle A, B \rangle = \tr_N A^*B$, where
  $\tr_N := \frac{1}{N}\tr $ is the normalized trace. On $N$-vectors, we use $\langle \cdot, \cdot \rangle$ to denote the normalized inner product, $\langle u, v \rangle = \frac{1}{N} \sum_{i=1}^N \overline{u}_i v_i$ and $\|\cdot\|_2$ to denote the normalized Euclidean norm,  $\| v\|_2^2=\frac{1}{N}\sum_j |v_j|^2$, and $\|\cdot\|_{\infty}$ to denote the max norm. We also use  $\langle u \rangle = \frac{1}{N} \sum_{i=1}^N u_i $. When multiplying a number by an identity matrix we will often drop the identity matrix from the notation. We use $[N]$ to denote the set $\{1, \ldots, N \}$. When it is clear from context, we use $a$ to denote the scalar diagonal matrix $aI$. On matrices, we use $\| \cdot \|$ denote the operator norm induced by the Euclidean vector norm. The use any other norm will be specified locally.  
  
We will consider $\C^N$ as an algebra with entry-wise multiplication, i.e. we write $uv:=(u_iv_i)_{i=1}^N$ and $f(u):=(f(u_i))_{i=1}^N$ for vectors $u,v \in \C^N$ and  functions $f: \C \to \C$. Consistent with this notation, for any $\alpha \in \C$, we write $\alpha=(\alpha, \dots,\alpha) \in \C^N$ for the constant vector, as we did with $1 = (1, \dots, 1)$ in \eqref{definition of b}.
Furthermore, we write $\frak{D}_{u}\in \C^{N \times N}$ for the diagonal matrix with the vector $u \in \C^N$ along its diagonal. For any square matrix $R$ we denote by $\frak{r}(R)$ its spectral radius. \\

 {\it Acknowledgments.} DR would like to thank Nicolas Brunel and Johnatan Aljadeff for fruitful discussions as well sharing unpublished notes. The authors would like to thank the anonymous referees for their helpful comments.

 \section{Setup and main results} Our main results concern the asymptotic behavior of the solution to the system of ordinary differential equations \eqref{ode}, coupled by the  $N \times N$-matrix $gX-1$, where $g>0$ is a coupling parameter, $-1$ introduces an exponential damping and the  random  
 connectivity matrix $X$  %
 couples the components of the activation vector  $u_t$. 
  In this work we consider the regime of decaying activation, $u_t \to 0$ as $t \to \infty$. Thus,  $g$ is chosen such that the spectrum of the non-normal matrix $gX-1$ lies to the left of the imaginary axis $\Re \zeta =0$ in the complex plane. 
  The first step of our analysis is therefore to determine the location of 
  the eigenvalues of $X$ in the $N \to \infty$ limit.  
  Let  $\{ \lambda_i \}_{i=1}^N$  denote these eigenvalues (counted with multiplicity). 
The  {\it empirical spectral measure} (ESM) associated to $X$ is 
defined by $\mu_N:= \frac{1}{N} \sum_{i=1}^N \delta_{\lambda_i}$. 

We now introduce two random matrix ensembles,  {\it the elliptic and the elliptic-type}, 
 and present the corresponding results on the decay of $u_t$.  The elliptic ensemble  is 
 a special case of the elliptic-type, where our results are more explicit. 
In both cases we make the following assumptions.

\begin{itemize}
\item[(A)] ({\it Centered entries}) All entries of $X$ are centered, i.e. 
\[ \E[x_{ij}] = 0 .\]

\item[(B)] ({\it Finite moments}) \label{assum:boundmoments}
For each $p \in \N$, there exists a $\varphi_p > 0$ such that 
\[ \E[|x_{ij}|^p] \leq \varphi_p N^{-p/2} \]
for all $1\leq i,j \leq N$.
\end{itemize}
The constants $\{\varphi_p\}$ and further constants appearing
in assumptions (1.C)-(1.D) and (2.C)--(2.F) later are called {\it model parameters}.
Now we present the simpler of the two random matrix ensembles,   the classical {\it elliptic ensemble}.

\subsection{Elliptic Ensemble}\label{sec:Ellipticassum}

The ensemble of elliptic random matrices was introduced by Girko 
\cite{Girkoelliptic} as an interpolation between Wigner random matrices and non-Hermitian random matrices with i.i.d. entries.
 In this model, each entry of $X$ has the same variance, every pair of entries $(x_{ij},x_{ji})$ is independent of all other entries, and within each of these pairs the entries are correlated.
In this case the relevant information about the deterministic measure that approximates the empirical eigenvalue distribution  is encoded in the scalar quantities $\E[|x_{ij}|^2]$ and $\E[x_{ij}x_{ji}]$; for convenience we choose $ \E[|x_{ij}|^2]=1/N $.    

Formally, we say $X$ is an {\it elliptic random matrix}  if the following hold:

\begin{itemize}
\item[(1.C)] ({\it Independent families}) \label{assum:ellipticcorrelations}
Let $(\xi_1,\xi_2)$ be a random vector in $\C^2$ and $\varsigma$ be a random variable in $\C$. 
The set $\{(x_{ij},x_{ji})\}_{1\leq i<j\leq N} \cup \{(x_{ii})\}_{1\leq i\leq N}$ is a collection of independent random elements, with  $\{(x_{ij},x_{ji})\}_{1\leq i<j\leq N}$ a family of i.i.d. copies of  $(\xi_1,\xi_2)$ and $\{(x_{ii})\}_{1\leq i\leq N}$ a family of i.i.d. copies of $\varsigma$.

\item[(1.D)] ({\it Normalization}) 
For $i\not=j$, the mixed second moments satisfy
\[  \E[|x_{ij}|^2]=\frac{1}{N}\quad  \text{ and } \quad \E[x_{ij}x_{ji}]=\frac{ \varrho}{N}, \]
for some complex parameter $\varrho$, with $|\varrho| \leq 1$.
\end{itemize}

In this case, the well known elliptic law \cite{Girkoelliptic,nguyen2015elliptic}
states that the ESM of $X$ converges  to the uniform measure on the closed domain 
\begin{equation} \label{def:ellipse} E_{\varrho} := \Big\{\zeta \in \C \; : \;  \frac{\left(\Re{\zeta}  \cos(\frac{\theta}{2}) - \Im{\zeta} \sin(\frac{\theta}{2})  \right)^2 }{(1+|\varrho|)^2} + \frac{\left(\Re{\zeta}  \sin(\frac{\theta}{2}) + \Im{\zeta} \cos(\frac{\theta}{2})  \right)^2 }{(1- |\varrho|)^2}  \leq 1 \Big\},  \end{equation}
enclosed by the ellipse $\partial E_{\varrho}$,
where $\theta \in [0,2 \pi]$ is such that $\varrho = |\varrho|e^{\ii \theta}$. If $|\varrho| =1$, then the support of the ESM degenerates to the line segment $\Re\zeta \sin(\theta/2) + \Im\zeta \cos(\theta/2) = 0$ with $|\zeta|\leq 2$. 
In what follows, it will be useful to note that the maximum value of $\Re\zeta$ on $E_{\varrho}$ is $\sqrt{1 + |\varrho|^2 + 2 \Re\varrho}.$

Our main result for the elliptic ensemble is the following theorem about the asymptotic decay of the solution $u_t$ of \eqref{ode}, where the full expansion in $\varrho$ can be explicitly computed. The  cases 
 $\varrho=0,1$ were already considered in \cite{EKR}. We now consider the remaining intermediate cases. Note that formally taking the limit as $\varrho \to 0$ in the following theorem recovers the result of \cite{EKR}.

\begin{theorem}[Asymptotics of ODE system with elliptic coupling] \label{thm:heat}
Let $X$ satisfy Assumptions (A), (B), and (1.C-D) with $\varrho $ such that $0 < |\varrho| < 1$ and let $u_t \in \C^N$ solve the linear ODE 
\eqref{ode}
with initial value $u_0$ distributed uniformly on the $N$ dimensional unit sphere, $\{u : \|u\|_2=1\} \subset \C^N $ and
coupling coefficient $0<g \leq (1+|\varrho|^2 + 2 \Re\varrho)^{-1/2}$. 

 Then there exists a constant $c_\varrho>0$ such that for any   $\epsilon\in (0,1)$ we have 
\begin{align}\label{besseldecay}
  \P\left( \left| \E_{u_0} \|u_t\|_2^2 - e^{-2t}  \sum_{j=1}^{\infty}  |\varrho|^{-j} \left| \frac{j}{tg} I_{j}(2\sqrt{\varrho} tg)  \right|^2  \right| \le N^{-1/2+\eps}  : \forall t \leq  N^{c_\varrho\epsilon}  \right) 
\geq 1 - \frac{C_{\epsilon, \nu}}{N^{\nu}}
\end{align}
for any 
$\nu \in \N$ and some constant $C_{\epsilon, \nu}$. The function $I_j$ is the $j^{th}$ modified Bessel function of the first kind. 
Here, $C_{\epsilon, \nu}$ depends  on the model parameters 
and $c_\varrho$ depends only on $\varrho$. $\E_{u_0}$ denotes expectation with respect to the initial condition
and $\P$ is the probability with respect to $X$. 
\end{theorem}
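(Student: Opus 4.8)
The plan is to derive \eqref{besseldecay} from the general trace formula \eqref{contint}, specialized to $f(x)=g(x)=e^{t(gx-1)}$, by computing the kernel $K(\zeta_1,\zeta_2)$ explicitly in the elliptic case and then evaluating the resulting double contour integral. First I would invoke the main theorem (Theorem~\ref{thm:mainthmgen}) to replace $\E_{u_0}\|u_t\|_2^2 = \tr_N e^{t(gX-1)}e^{t(gX^*-1)}$ by the deterministic double contour integral up to an error $N^{-1/2+\eps}$ holding with high probability simultaneously for all $t\le N^{c_\varrho\eps}$; the polynomial-in-$t$ control here comes from the fact that the contour $\gamma$ can be taken at a fixed distance outside the spectral ellipse $E_\varrho$ (using $g\le (1+|\varrho|^2+2\Re\varrho)^{-1/2}$, so the rescaled spectrum stays inside the unit disk region where the local law error estimates hold), and $e^{t(g\zeta-1)}$ is bounded on $\gamma$ by $e^{-ct}$ times something growing at most exponentially in $t$, which is absorbed into $N^{c_\varrho\eps}$.

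Next I would compute $K(\zeta_1,\zeta_2)$. In the elliptic ensemble the variance profile is flat, $s_{ij}=1/N$, and $\E x_{ij}x_{ji}=\varrho/N$, so the extraspectral Dyson equation \eqref{definition of b} reduces to a scalar (or low-dimensional) fixed-point equation solvable in closed form: the deterministic approximation of $(X-\zeta)^{-1}$ is governed by the solution $m(\zeta)$ of a quadratic-type equation analogous to the one producing the elliptic law, and $K(\zeta_1,\zeta_2)$ becomes an explicit rational function of $\zeta_1$, $\ol\zeta_2$, and $\varrho$, with poles on the curve $\zeta_1\ol\zeta_2 = $ (something involving $\varrho$) — these are exactly the singularities flagged in the introduction as giving the leading contribution. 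Concretely I expect $K(\zeta_1,\zeta_2)$ to take a form whose partial-fraction / geometric-series expansion in powers of $\varrho/(\zeta_1\ol\zeta_2)$ (or similar) generates the sum over $j$.

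Then I would evaluate the double contour integral
\[
\left(\frac{1}{2\ii\pi}\right)^2 \oint_\gamma d\zeta_1 \oint_{\ol\gamma} d\ol\zeta_2\, e^{t(g\zeta_1-1)} e^{t(g\ol{\zeta_2}-1)} K(\zeta_1,\zeta_2)
\]
by expanding $K$ as a convergent series in the interior-facing variable, integrating term by term, and recognizing each contour integral of the form $\frac{1}{2\ii\pi}\oint e^{t(g\zeta-1)} \zeta^{-j-1}\,d\zeta$ (or $\oint e^{tg\sqrt{\varrho}(w+1/w)/\cdots}$ after a change of variables) as an integral representation of the modified Bessel function $I_j$. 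The factor $\frac{j}{tg}$ and the prefactor $|\varrho|^{-j}$ and the overall $e^{-2t}$ should drop out of this bookkeeping; one must track the argument $2\sqrt\varrho tg$ carefully, and here is where the correction to \cite[Eq.~(116)]{MC} enters. I would also check the $j$-series converges (using $|I_j(z)|$ decay in $j$ and $|\varrho|<1$) so that interchanging sum and integral, and truncating, is legitimate, and confirm the $\varrho\to 0$ limit matches \cite{EKR}.

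\textbf{Main obstacle.} The hard part will be the explicit solution of the extraspectral Dyson equation and the resulting precise form of $K(\zeta_1,\zeta_2)$ near its singularities, together with justifying that the local law / main theorem error term is genuinely uniform for $t$ up to a power of $N$ — i.e. controlling how the distance from $\gamma$ to the pseudospectrum, and hence the error constants, degrade as $t$ grows and one is forced to push $\gamma$ closer to $E_\varrho$ to keep $e^{t(g\zeta-1)}$ from overwhelming the bound. Extracting the exact Bessel coefficients (including the $j/(tg)$ and $|\varrho|^{-j}$ factors) from the residue/series computation, and getting the argument of $I_j$ right, is the delicate computation that fixes the error in \cite{MC}.
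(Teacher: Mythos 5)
Your proposal is correct and follows essentially the same route as the paper: apply Theorem~\ref{thm:mainthmgen} with the explicit elliptic kernel $K(\zeta_1,\zeta_2)=\frak{b}(\zeta_1)\ol{\frak{b}(\zeta_2)}/(1-\frak{b}(\zeta_1)\ol{\frak{b}(\zeta_2)})$ from Remark~\ref{rem:simple}, pick a contour dilating $\partial E_\varrho$ at distance $N^{-c\eps}$ so the error stays $N^{-1/2+\eps}$ for $t\le N^{c_\varrho\eps}$ via the constraint $g\le(1+|\varrho|^2+2\Re\varrho)^{-1/2}$, change variables $\zeta=w+\varrho w^{-1}$ (equivalently $w=-1/\frak{b}(\zeta)$) to turn $K$ into the geometric series $\sum_{j\ge1}(w_1\ol{w}_2)^{-j}$, and evaluate term-by-term using the Bessel generating function $e^{\frac{x}{2}(u+u^{-1})}=\sum_k I_k(x)u^k$ together with the three-term recurrence to obtain \eqref{besseldecay}. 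The paper carries out exactly the change of variables and Bessel bookkeeping you anticipate, so no genuinely different idea is involved.
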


The series in \eqref{besseldecay} is convergent because $I_j(x) \sim \frac{1}{j!}(x/2)^n$ for fixed $x>0$ and  $j \to \infty$.  
We will show in Section \ref{sec:longellipse} that the infinite sum in \eqref{besseldecay} is, for large $t$, approximated by
\begin{equation} \label{eq:correct} (1+ |\varrho|^2) I_0\left(2t  g \sqrt{2\Re{\varrho} +|\varrho|^2 + 1 }  \right)  - 2\Re\left( \frac{\varrho + |\varrho|^2 }{ \varrho + 1 } \right) I_2\left(2tg \sqrt{2\Re{\varrho} +|\varrho|^2 + 1 }  \right). \end{equation}
The same asymptotics were computed in \cite[Eq. (116)]{MC} but with a slightly erroneous final formula. Our formula \eqref{eq:correct}, thus, corrects the corresponding formula in \cite{MC}.
Using the asymptotics of the Bessel functions in \eqref{eq:correct} we will then show that
\begin{equation}\label{asymp1}
e^{-2t}  \sum_{j=1}^{\infty}  |\varrho|^{-j} \left| \frac{j}{tg} I_{j}(2\sqrt{\varrho} tg)  \right|^2 \approx  
\frac{e^{2t \left( g \sqrt{2\Re{\varrho} +|\varrho|^2 + 1 } -1 \right) }}{{\sqrt{2\pi}\sqrt{ 2t  g \sqrt{2\Re{\varrho} +|\varrho|^2 + 1 }  }}}\left(1+|\varrho|^{2}  - 2\Re\left( \frac{\varrho + |\varrho|^2 }{ \varrho + 1 } \right) \right),
\end{equation}
asymptotically for large $t$. 
In particular, for the critically tuned case, $g=(1+|\varrho|^2 + 2 \Re\varrho)^{-1/2}$ we have from \eqref{besseldecay} and \eqref{asymp1}, after choosing $\epsilon=\frac{1}{4}$, say, that
\begin{equation}\label{2side1}
 \E_{u_0} \|u_t\|_2^2 \approx \frac{ 1}{2\sqrt{\pi t}}
  \left( (1+|\varrho|^{2})  - 2\Re\left( \frac{\varrho + |\varrho|^2 }{ \varrho + 1 } \right) \right) 
\end{equation}
with very high probability in the regime where $t, N\to \infty$ such that $t\le N^{c}$, for some positive constant $c $. 
\begin{remark} A similar result considering the system of ODEs in equilibrium, driven by white noise was considered by the authors of \cite{correlationsMarti}. Their  analysis is based on the formulas from \cite{MC}.
\end{remark}

\subsection{Elliptic-Type Ensemble} \label{sec:Elliptictypeassum}
In this model, we once again assume that each pair of entries $(x_{ij},x_{ji})$ is independent of all other entries, but do not assume the matrix entries are identically distributed. This model is a natural generalization of the elliptic ensemble and interpolates between Hermitian Wigner-type matrices with a variance profile; see, for instance \cite{Ajankirandommatrix} and references within, and non-Hermitian matrices with independent entries, also with a variance profile \cite{AEK-circ}, \cite{CHNR}. 
The latter case is considered in \cite{EKR}.

In this case the relevant information about the covariances of the matrix entries is encoded in the $N \times N$-matrices $S$ and $T$ defined by  their matrix elements as 
\bels{definition of S and T}{s_{ij}:=\E[|x_{ij}|^2] \quad\text{ and } \quad t_{ij}:=\E[x_{ij}x_{ji}].}
Formally, we say $X$ is an {\it elliptic-type random matrix} if in addition to (A) and (B) the following Assumption~(2.C) holds.
\begin{itemize}
\item[(2.C)] ({\it Independent families})
The set $\{(x_{ij},x_{ji})\}_{1\leq i<j\leq N} \cup \{(x_{ii})\}_{1\leq i\leq N}$ is a collection of independent random elements.
\end{itemize}
Throughout the entire paper we will assume that $X$ is of elliptic-type 
   and that the following more technical Assumptions~\mbox{(2.D-F)} are also satisfied.
\begin{itemize}
\item[(2.D)] 
 ({\it Genuinely non-Hermitian}) There is a positive constant\footnote{We use the notation $|\varrho|$ for consistency with elliptic random matrices, but the angle of $\varrho$ is irrelevant.} $|\varrho|<1$ such that
for every $1 \leq i < j \leq N$, 
\begin{equation} \label{assum:ellcorr} |\E[x_{ij} x_{ji} ]|^{2} \leq |\varrho|^2  \E[|x_{ij}|^2] \E[| x_{ji} |^2],
\quad  \mbox{i.e.} \quad  |t_{ij}|\le |\varrho| \sqrt{ s_{ij} s_{ji}}. 
 \end{equation}
Note that, this would be the Cauchy-Schwarz inequality if $|\varrho|$ were replaced by 1, therefore the condition $|\varrho|<1$ ensures we are in the genuinely non-Hermitian setting.  
\item[(2.E)] ({\it Uniform primitivity of $S$}) There is a constant $c_0>0$ and an integer $L$ such that
\begin{equation} \label{eq:prim} [S^L]_{ij} \geq \frac{c_0}{N}\,, \qquad
 [(S^*S)^L]_{ij} \geq \frac{c_0}{N}\end{equation} 
for all $1 \leq i,j \leq N$. \footnote{The uniform primitivity condition was stated incorrectly in the published version of \cite{EKR} but is correct in the arXiv version (arXiv:1708.01546v3). The first of the two formulas defining uniform primitivity in part (1) of Assumption 2.1 of \cite{EKR} should be the  formula \eqref{eq:prim}. }
\end{itemize}

\begin{itemize}
\item[(2.F)] ({\it H\"{o}lder continuity\footnote{ This condition can easily be relaxed to piecewise H\"{o}lder continuity requiring the existence of a partition $\{I_k\}_{k=1}^K$ of $[N]$  into discrete intervals such that $\min_k \abs{I_k} \ge c_1 N$ and  such that
\[ \| t_{\cdot,i}- t_{\cdot,j}\|_2  \le \frac{C_1}{N}  \Big| \frac{i-j}{N}\Big|^{1/2}  \]
whenever $i,j \in I_k$ for some ($N$-independent) constants $c_1,C_1>0$.  Here $t_{\cdot, j}$ denotes the vector $(t_{aj})_{a=1}^N$.} of $T$}) We assume that
\[
    T_{ij} = \frac{1}{N}\tau\big(\frac{i}{N}, \frac{j}{N}\big)
\]
with some $1/2$-H\"older continuous function $\tau: [0,1]^2\to \C$, i.e.
\[ 
    | \tau(x, y) - \tau(x',y')| \le C_1 [ |x-x'|+ |y-y'|]^{1/2}
\]
for any $x,x', y,y'\in [0,1]$
with some ($N$-independent) constant $C_1>0$.

\end{itemize}
 Assumption (2.F) is a regularity assumption on the input data that ensures regularity of the solution to the EDE, \eqref{definition of b} below, and certain properties of the
 self-consistent pseudo-resolvent set $\mathcal{R}$ given in 
 Definition \ref{def:Rb}, below. See \cite{AjankiQVE}, Section 11.2 for more details. 
 
Note that Assumptions (1.C-D) with $|\varrho|<1$ imply Assumptions (2.C-F)
 with $s_{ij} =\frac{1}{N}$, $t_{ij} = \frac{\varrho}{N}$ 
 and, thus, elliptic random matrices $X$ also satisfy the assumptions made for elliptic-type  ensemble.\\

The ESM of elliptic-type matrices  in this generality  has not  been considered
in the literature.  A few exceptions are in \cite{BSSbrown}, where the special case of triangular-elliptic operators are introduced and their ESM and Brown measure are computed. Additional special cases of this model were considered in \cite{girko2012theory}, where the canonical equation with name K23 is derived. In the physics literature, fixed point equations to derive the ESM are considered in \cite{KS-corrvar} and applications to the stability of ecosystems is considered in \cite{GrilliModularity}.

In Theorem~\ref{thm:heat} the coupling constant $g$ satisfies an explicit $\varrho$-dependent upper bound which coincides with the inverse of $\max_{\zeta \in E_\varrho} \Re\2\zeta $, i.e. the maximal real part among the spectral parameters in the asymptotic spectrum $E_\varrho$ of $X$. When $X$ is of elliptic-type, the location of the spectrum in the $N \to \infty$ limit is not as explicit. Before we can state the analog of Theorem~\ref{thm:heat} for the elliptic-type matrices, Theorem \ref{thm:heatelliptictype}, we need to determine the appropriate analog of the elliptic law. 
Therefore, a main technical task of the current paper is to find the appropriate generalization of the deterministic 
set $E_\varrho$ for elliptic-type ensembles since
its rightmost point determines the large time behavior of \eqref{ode} similarly to the elliptic case.

The key object in determining the generalization of the
set $E_\varrho$ is a new nonlinear equation
for the unknown vector $\frak{b}=\frak{b}(\zeta)\in \C^N$, depending on a 
complex spectral parameter $\zeta\in \C$,
that we coin the {\it extraspectral Dyson equation (EDE)}
\bels{definition of b}{
1+(\zeta + T \frak{b} (\zeta)) \frak{b}(\zeta) = 0\,,  \qquad (EDE)
}
 with the following
 important side condition on the solution:
\bels{sidecond}{
\frak{r}(\frak{D}_{\abs{ \frak{b}(\zeta)}^2} S)<1.
}
We remind the reader that vector multiplication was defined in the Notations section by considering $\C^N$ as an algebra with entry-wise multiplication. 

Equation \eqref{definition of b} is similar to the  {\it quadratic vector equation (QVE)}  that was extensively studied 
in~\cite{AjankiQVE}
 with some key differences. Unlike for the QVE in~\cite{AjankiQVE}, here 
 $T$ is not necessarily assumed to have non-negative entries  and we do not restrict 
 either $\zeta$  or the solution vector to the complex upper half plane. 
 In particular, the  standard arguments from  \cite{AjankiQVE}  ensuring 
 the existence and uniqueness of the 
 solution $ \frak{b}(\zeta)$ 
 to \eqref{definition of b} break down in the current setting.  Thus we need a new method.
Note that EDE itself depends only on the matrix $T$, but the side condition~\eqref{sidecond}
involves $S$ as well.

We now define the key concept of this paper, the  {\it self-consistent pseudo-resolvent set}
for elliptic-type ensembles:
\begin{definition}\label{def:Rb} Given an elliptic-type ensemble with (co)variance matrices $S$ and $T$
as in \eqref{definition of S and T}, we define the self-consistent pseudo-resolvent set $\cal{R}\subset \C$
as the set of all spectral parameters $\zeta$ for which 
the extraspectral Dyson equation \eqref{definition of b} with side condition~\eqref{sidecond}
has a solution $ \frak{b}= \frak{b}(\zeta)\in \C^N$.  
\end{definition}

We call the complement of $\cal{R}$, i.e. $\cal{R}^c=\C\setminus \cal{R}$,
  the  {\it  self-consistent pseudospectrum}. The following proposition summarizes properties of $\cal{R}$ and the solution to EDE.

\begin{proposition}[Solution of the extraspectral Dyson equation \eqref{definition of b}] 
\label{prp:Solution of vector equation}
  Let $X$ satisfy Assumptions (A), (B) and (2.C-F). Then 
the set $\cal{R}\subset \C$ and the vector $\frak{b}=\frak{b}(\zeta)$, solving \eqref{definition of b} and \eqref{sidecond},
from Definition~\ref{def:Rb} satisfy
 the following properties:
\begin{enumerate}
\item 
\label{prop:Uniqueness}
For any $\zeta\in \cal{R}$  the solution $\frak{b}(\zeta)$  to
 \eqref{definition of b} with \eqref{sidecond} is unique.
\item 
\label{prop:Openness}
$\cal{R}$ is open, its complement $\cal{R}^c$  is compact in $\C$ and $0\not\in \cal{R}$. 
\item 
\label{prop:Holomorphicity}
The function $\frak{b}:\cal{R}\to \C^N$ is holomorphic.
\item 
\label{prop:Asymptotics}
At infinity $\frak{b}$ has the asymptotic behavior $\zeta \2\frak{b}(\zeta) \to -1$ as $\abs{\zeta} \to \infty$.
\end{enumerate}
\end{proposition}
 The proof of Proposition~\ref{prp:Solution of vector equation} is presented at the end of Section~\ref{Subsec:Self consistent pseudospectrum via MDE}. Note that in contrast to the simpler case of elliptic ensembles, for elliptic-type matrices the self-consistent pseudo-resolvent set can have several connected components (see  Example~\ref{ex:1} in Section~\ref{sec:examples}). 

The solution $ \frak{b}(\zeta)$ of \eqref{definition of b} and \eqref{sidecond} has the interpretation of being the entrywise limit of the resolvent of $X$ away from its self-consistent pseudospectrum as $N \to \infty$, i.e. 
\[
(X-\zeta)^{-1} \approx \frak{D}_{ \frak{b}(\zeta)}
\]
with very high probability. 
We therefore call $ \frak{b}(\zeta)$  the diagonal part of the {\it self-consistent resolvent} of $X$. 
The precise statement 
is the following 
non-Hermitian analogue of the well-known {\it isotropic local law} for Wigner random matrices \cite{KnowlesYin}.

\begin{theorem}\label{prp:b as resolvent} For elliptic-type ensembles satisfying  
Assumptions (A), (B) and (2.C-F)
there is a small constant $c_*>0$, depending only on $\abs{\varrho}$ and $L$ in \eqref{eq:prim} such that the following hold:
\begin{itemize}
\item[(i)] (Concentration of spectrum) 
\bels{locspec}{
\P\pb{\exists \,\zeta \in \spec(X)\cap\cal{R} \; : \; \abs{\zeta}\ge N^{-c_*}, 
 \frak{r}(\frak{D}_{\abs{ \frak{b}(\zeta)}^2} S) \le 1- N^{-c_*}  }\le \frac{C_\nu}{N^\nu}\,
}
holds
for all $\nu\in \N$, where $C_\nu$ depends on model parameters in addition to $\nu$. 

\item[(ii)] (Isotropic local law)
\bels{isotrop}{
\P \pbb{\abs{\scalar{x}{((X-\zeta)^{-1}-\frak{D}_{ \frak{b}(\zeta)})y}}\ge \frac{N^\eps}{\sqrt{N }}\norm{x}_2\norm{y}_2}\le \frac{C_{\eps,\nu}}{N^\nu}
}
holds for all (small) $\eps>0$, $\nu \in \N$,  
deterministic vectors  $x,y \in \C^N$ and $\zeta \in\cal{R}$ with 
$\abs{\zeta}\ge N^{-c_*\epsilon}$ and 
$ \frak{r}(\frak{D}_{\abs{ \frak{b}(\zeta)}^2} S) \le 1- N^{-c_*\epsilon}$.
%$\min\{\abs{\zeta}, \Delta_\zeta\} \ge N^{-c_*\epsilon}$.
%\bels{control}{
%\frak{r}(\frak{D}_{\abs{b(\zeta)}^2} S)\le 1- N^{- c_*\eps},\quad \mbox{and}\quad |\zeta|\ge N^{-c_*}.
%}
%\bels{control}{ \min\{\abs{\zeta}, \Delta_\zeta\} \ge N^{-c_*\epsilon}
%%\Delta_\zeta\ge N^{- c_*\eps},\quad \mbox{and}\quad |\zeta|\ge N^{-c_*}.
%}
The constant  $C_{\eps,\nu}$ depends on the model parameters in addition to $\eps$ and $\nu$.
 \end{itemize}
\end{theorem}

The proof of Theorem~\ref{prp:b as resolvent} is presented at the end of Section~\ref{sec:Properties of R}. 
It is tempting to conclude from~\eqref{locspec} that a neighborhood of the self-consistent pseudospectrum $\cal{R}^c$ contains the (random) spectrum of $X$ with very high probability, and we provide numerical evidences for several concrete ensembles in Section \ref{sec:examples}.
However, for an arbitrary elliptic-type random matrix we do not have a good rigorous control on the derivative of the function 
$\zeta\mapsto \frak{r}(\frak{D}_{\abs{ \frak{b}(\zeta)}^2} S)$ near the boundary of $\cal{R}$, 
so it is difficult to ensure the effective lower bound on $1-\frak{r}(\frak{D}_{\abs{ \frak{b}(\zeta)}^2} S)$ in~\eqref{locspec} by choosing $\zeta$ a little bit away from $\cal{R}^c$. 
Fortunately, under the additional condition that $t_{ij}\ge 0$, we have such an effective control in the most important regime, namely around 
the rightmost point
of $\cal{R}^c$ that determines the long time asymptotics of~\eqref{ode}.

 The analogous concentration of the (random) spectrum to  a neighborhood of the self-consistent pseudospectrum was shown in the real elliptic case \cite{pertelliptic}, with a proof that readily generalizes to the complex case, see also Remark 2.3 (ii) of \cite{alt2021local}. In fact, when \eqref{locspec} is applied in the elliptic ensemble $\frak{r}(\frak{D}_{\abs{ \frak{b}(\zeta)}^2} S) $ is explicit; see Section \ref{sec:longellipse}, which also gives
 an alternative proof that the spectrum of $X$  concentrates for elliptic ensembles. We remark that similar
  concentration was also shown for Hermitian and non-Hermitian random matrices with very general decaying correlations among the matrix elements, Corollary 2.3 in \cite{EKS} and Theorem 2.2 in \cite{alt2020local}, respectively.

We also point out that apart from an effective lower bound on 
$1-\frak{r}(\frak{D}_{\abs{ \frak{b}(\zeta)}^2} S)$,  a lower bound separating $|\zeta|$ from 
zero is also needed in Theorem~\ref{prp:b as resolvent}. 
There is no relation between these two effective 
lower bounds.
In particular, the error term in~\eqref{isotrop} may  blow up as
$\zeta$ approaches  zero, especially since our conditions do not exclude
that $X$ has a large kernel (see Example~\ref{ex:1} below). The EDE correctly predicts
that $0\in \cal{R}^c$
(see Proposition~\ref{prp:Solution of vector equation}), but  the size of the neighborhood around the origin contained in $\cal{R}^c$ 
  is unstable for very  small $|\zeta|$.
 This technical issue is irrelevant for our application in this paper since the proofs of Theorems~\ref{thm:heat} and \ref{thm:heatelliptictype} only require evaluation of $\frak{b}$ away from $\zeta =0$.  
  \\

Now we state our main result on the asymptotic behavior of the solution to \eqref{ode} in the case when $X$ is of elliptic-type.  For this result, we additionally  assume that the matrix $T$ is entry-wise non-negative. 
 This assumption makes the analysis more tractable and  allows us to effectively bound the set $\cal{R}^c$.  Most importantly, in 
Proposition~\ref{prp:rightmost} 
 we will show that $t_{ij}\ge 0$ 
  ensures that $ \cal{R}$  is symmetric across the real axis, and that $\zeta^*:=\max_{\zeta \in \cal{R}^c} \Re \zeta$ is positive and lies in $ \cal{R}^c$.   In fact, it is the unique point  with maximal real part in $\cal{R}^c$. This property does not hold 
  for general $T$, as $\cal{R}$ can take a wide variety of shapes, see \cite[Figure 5]{GrilliModularity} for examples. Additionally, the case $T$ has non-negative entries, $t_{ij}= \E [x_{ij}x_{ji}] \ge 0$,
   is of particular interest in neuroscience, as it corresponds to an overrepresentation of reciprocal connections, often found in neural networks \cite{correlationsMarti}.

\begin{theorem}[Asymptotics of ODE system with general elliptic-type  matrix ] \label{thm:heatelliptictype}
Let $X$ satisfy Assumptions (A), (B) and (2.C-F) with $t_{ij} \geq 0$ for all $i,j$. Let $u_t \in \C^N$ solve the linear ODE 
\eqref{ode}
with initial value $u_0$ distributed uniformly on the $N$ dimensional unit sphere, in $ \C^N $. 
Set $\zeta^{*} =\max_{\zeta \in \cal{R}^c} \Re \zeta$ and assume the coupling coefficient $g$ satisfies $0<g \leq \frac{1}{\zeta^*}$.    Then we have
\begin{align}\label{besseldecaygenell}
 & \P\left( \left| \E_{u_0} \|u_t\|_2^2 - \frac{A(S,T)}{\sqrt{2 \pi g t}} e^{2t(g \zeta^* - 1 )} \right| \le N^{-1/2+\eps} + \Big[ N^{c_* \eps }  e^{-gtc} +  \frac{C}{ gt}\Big]e^{2t(g \zeta^* - 1 )}: \forall t \le N^{c_*\epsilon} \right) \\
&\geq 1 - \frac{C_{\epsilon, \nu}}{N^{\nu}} \notag
\end{align}
for any $\eps>0$, 
$\nu \in \N$ and some constants $c_*, c, C,$ and $C_{\epsilon, \nu}$. The positive constant $A(S,T)$  is explicitly  given in \eqref{eq:Defelltypecoeff} and \eqref{eq:elltypecoeff}  of Section \ref{sec:ellcorr}. 
We use $\E_{u_0}$ to denote the expectation with respect to the initial condition and $\P$ for the probability with respect to $X$. 
The constant $C_{\epsilon, \nu}$ depends on  the model parameters as well as on $ \eps, \nu$, the constants $c,C$ depend on model parameters and $c_*$ depends only on $\abs{\varrho}$ and $L$ in \eqref{eq:prim}.
\end{theorem}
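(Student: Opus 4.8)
The plan is to reduce the statement to the contour-integral representation \eqref{contint} and then to extract the $t\to\infty$ asymptotics of the resulting deterministic double integral by localizing at the rightmost point $\zeta^*$ of $\cal{E}^c$. The solution of \eqref{ode} is $u_t=e^{t(gX-1)}u_0$, so averaging over $u_0$ uniform on the unit sphere (for which $\E_{u_0}[u_0u_0^{*}]$ equals the $N\times N$ identity) gives
\[
\E_{u_0}\|u_t\|_2^2=\tfrac1N\tr\big(e^{t(gX-1)}e^{t(gX^{*}-1)}\big)=\tfrac1N\tr f(X)g(X^{*}),\qquad f(\zeta)=g(\zeta):=e^{t(g\zeta-1)} .
\]
Since $f$ and $g$ are entire and $g\le 1/\zeta^*$ puts the asymptotic spectrum $\cal{E}^c$ of $X$ inside $\{\Re\zeta\le 1/g\}$, Theorem~\ref{thm:mainthmgen} applies with a contour $\gamma\subset\cal{E}$ encircling $\cal{E}^c$ at a suitable ($N$-dependent) distance, on which $\abs{f},\abs{g}\lesssim e^{t(g\zeta^*-1)}\le 1$, and yields, on an event of probability at least $1-C_{\eps,\nu}N^{-\nu}$ and uniformly for $t\le N^{c_*\eps}$,
\[
\E_{u_0}\|u_t\|_2^2=\Big(\tfrac1{2\ii\pi}\Big)^{2}\oint_{\gamma}d\zeta_1\oint_{\overline{\gamma}}d\overline{\zeta}_2\;e^{t(g\zeta_1-1)}e^{t(g\overline{\zeta}_2-1)}K(\zeta_1,\zeta_2)+\ord\!\big(N^{-1/2+\eps}\big) .
\]
This accounts for the $N^{-1/2+\eps}$ term, and it remains to analyze the deterministic double integral.

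The modulus of the exponential prefactor is $e^{tg(\Re\zeta_1+\Re\zeta_2)-2t}$. Since $t_{ij}\ge 0$, the analysis of Section~\ref{sec:ellcorr} shows that $\cal{E}$ is symmetric across $\R$, that $\zeta^*>0$ belongs to $\cal{E}^c$ and is its unique point of maximal real part, and that near $\zeta^*$ the boundary $\partial\cal{E}$ is smooth and strictly convex, $\Re\zeta=\zeta^*-\tfrac{\kappa}{2}(\Im\zeta)^2+\ord((\Im\zeta)^3)$ for some $\kappa>0$. I would deform $\gamma$ so that near $\zeta^*$ it runs in the (locally vertical) tangent direction at a small distance $\eta$ to the right of $\zeta^*$, and split $\gamma=\gamma_0\cup\gamma_1$ with $\gamma_0$ the arc within a fixed small distance of $\zeta^*$. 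On $\gamma_1\times\overline{\gamma}$ and $\gamma\times\overline{\gamma}_1$ the exponential is bounded by $e^{2t(g\zeta^*-1)-2gtc}$ for some $c>0$, which, multiplied by the polynomial-in-$\eta^{-1}$ bound on $K$ from the local law, produces the term $N^{c_*\eps}e^{-gtc}e^{2t(g\zeta^*-1)}$; the restriction $t\le N^{c_*\eps}$ is precisely what keeps the error bounds of Theorem~\ref{thm:mainthmgen} valid along this $N$-dependent contour.

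It remains to treat $\gamma_0\times\overline{\gamma}_0$, which requires the behavior of $K$ near $(\zeta^*,\zeta^*)$. Expressing $K$ through the solution $\frak{b}$ of the extraspectral Dyson equation \eqref{definition of b} and a linear stability operator built from $\frak{b}(\zeta_1)$ and $\overline{\frak{b}(\zeta_2)}$ — whose spectral radius tends to $1$ as $(\zeta_1,\zeta_2)\to(\zeta^*,\zeta^*)$, consistently with $\Delta_\zeta$ of Definition~\ref{def:cal E} vanishing as $\zeta\to\zeta^*$ — one obtains that $K(\zeta_1,\zeta_2)$ has a simple-pole type singularity along a critical curve through $(\zeta^*,\zeta^*)$: schematically, near $(\zeta^*,\zeta^*)$, $K(\zeta_1,\zeta_2)=\Psi(\zeta_1,\zeta_2)/\psi(\zeta_1,\overline{\zeta}_2)+(\text{less singular})$, where $\psi$ vanishes to first order at $(\zeta^*,\zeta^*)$ and $\Psi(\zeta^*,\zeta^*)$ together with the first-order part of $\psi$ are explicit in $\frak{b}(\zeta^*)$ and the left/right null vectors of the stability operator there. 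Inserting this into the double integral over $\gamma_0\times\overline{\gamma}_0$ and evaluating the resulting model integral by a stationary-phase/saddle-point analysis with stationary point at $\zeta^*$ — the same integral that, in the i.i.d.\ and elliptic cases, is the modified-Bessel integral of \cite{EKR,MC} — produces the leading term $A(S,T)(2\pi gt)^{-1/2}e^{2t(g\zeta^*-1)}$, with $A(S,T)$ the constant written out in \eqref{eq:elltypecoeff}, while the next-order terms (from the expansions of $K$ and of $\Re\zeta$ near $\zeta^*$ and from the subleading part of the saddle-point expansion) give the relative error $\ord(1/(gt))$. For the elliptic ensemble $\frak{b}$ and $K$ are explicit, the model integral is exact, and this calculation collapses to the Bessel sum in \eqref{besseldecay}, i.e.\ Theorem~\ref{thm:heat}.

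The main obstacle is this last step: showing that the stability operator degenerates at $\zeta^*$ and at no point of $\cal{E}^c$ with larger real part, identifying the singular profile $\Psi/\psi$ and hence the constant $A(S,T)$, and controlling all subleading terms uniformly in $N$. This is the detailed study of the extraspectral Dyson equation \eqref{definition of b} near its edge that, in the correlated setting, replaces the direct symmetrization estimate available for independent entries in \cite{EKR}; it is also where the hypothesis $t_{ij}\ge 0$ enters, as via a Perron--Frobenius argument for $\frak{D}_{\abs{\frak{b}(\zeta^*)}^2}S$ it guarantees that $\zeta^*$ is a single real point of $\partial\cal{E}$ and that the relevant null vectors are nondegenerate.
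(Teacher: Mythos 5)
Your overall route coincides with the paper's: reduce $\E_{u_0}\|u_t\|_2^2$ to the double contour integral of $e^{t(g\zeta_1+g\ol{\zeta}_2-2)}K(\zeta_1,\zeta_2)$ via Theorem~\ref{thm:mainthmgen}, localize at the unique rightmost point $\zeta^*$ of $\cal{E}^c$, and extract the $t^{-1/2}$ prefactor from the simple-pole-type singularity of $K$ (vanishing of an isolated eigenvalue of $\frak{D}_{\frak{b}_1\ol{\frak{b}}_2}^{-1}-S$) by a residue-plus-Gaussian computation, with the same error bookkeeping ($N^{-1/2+\eps}$, exponentially small arcs, relative $\ord(1/(gt))$).

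The genuine gap is exactly what you defer as ``the main obstacle'': none of the facts that make this localization legitimate are established, and they are the substance of the proof. Concretely: (i) you need $\cal{E}^c\subset\{\abs{\zeta}\le\zeta^*\}$ together with a quantitative bound $\Delta_\zeta\gtrsim\eps$ on $\{\abs{\zeta}\ge\zeta^*+\eps\}$, since this is what lets you place the contour inside $\cal{E}^\eps$ (so Theorem~\ref{thm:mainthmgen} applies) and bounds $K$ by $N^{c\eps}$ away from $\zeta^*$; in the paper this rests on the Stieltjes representation \eqref{St1}, Lemma~\ref{lmm:zeta* away from zero}, the monotonicity argument for $\abs{\frak{b}}^{-1}=-T\abs{\frak{b}}+\frak{z}$ (Lemmas~\ref{lem:bboundcirc} and \ref{lem:decreasing}) and the derivative bound of Lemma~\ref{lem:specradbound}, all using $t_{ij}\ge0$ and $\frak{b}(\zeta)<0$ for $\zeta>\zeta^*$; a Perron--Frobenius statement at $\zeta^*$ alone does not yield it. (ii) Your asserted local convexity $\Re\zeta=\zeta^*-\tfrac{\kappa}{2}(\Im\zeta)^2+\dots$ of $\partial\cal{E}$ is not derived; the corresponding fact in the paper is $\partial\ol{z}_2=-1$ and $\partial^2\ol{z}_2>0$ for the zero curve of the isolated eigenvalue $\lambda(\zeta_1,\ol{\zeta}_2)$, obtained only after differentiating \eqref{definition of b} and computing second derivatives of $\lambda$ with the reduced resolvent (Lemma~\ref{lemma:derivativescorr}); that same computation is what produces the explicit $A(S,T)$ of \eqref{eq:elltypecoeff} and the statement $A(S,T)\sim1$, whereas your proposal only asserts that the singular profile is ``explicit''. (iii) Even invoking analytic perturbation theory for $\lambda$ requires first showing the zero eigenvalue of $L(\zeta^*,\zeta^*)$ is isolated with a gap of order one, which comes from uniform primitivity via Lemma~\ref{lemm:quanprim}. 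Since these steps constitute essentially all of the new analysis in Section~\ref{sec:ellcorr}, the proposal is a correct plan in the paper's spirit but not a proof.
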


In the critically coupled regime, $g = 1/\zeta^* $, this theorem recovers the $t^{-1/2}$ decay of the \mbox{$l^2$-norm} squared of the solution seen in \cite{EKR}, for the uncorrelated case. The proof is given in Section \ref{sec:ellcorr}.

 \subsection{Examples} \label{sec:examples}

 We now present some examples, which demonstrate possible behaviors of 
  the  self-consistent pseudo-resolvent set and the self-consistent resolvent. For more examples; see \cite{GrilliModularity} and \cite{KS-corrvar}.\\

\begin{example} \label{ex:1}
 \normalfont
This example shows the solution $\frak{b}(\zeta)$ to EDE~\eqref{definition of b} can blow-up in a neighborhood of $0$.
Let $N$ be divisible by 3, and consider the matrix 
\begin{equation}\label{Ematrix}
X  = \begin{pmatrix}  X_{11}  & Y +W  \\ Y^*+Z^*  & 0 \end{pmatrix},
\end{equation}
where  $X_{11}$  is an $N/3 \times N/3$ matrix and $W$, $Y$, and $Z$ are $N/3\times 2N/3$ matrices, 
each of the matrices are independent with centered, i.i.d. entries having variance $\frac{3}{N}\sigma_{11}^2$,  $\frac{3}{N}\sigma_W^2$, $\frac{3}{N}\sigma_Y^2$, and $\frac{3}{N}\sigma_Z^2$, respectively. 
We clearly have
\[
S = \frac{3}{N} \begin{pmatrix}  \sigma_{11}^2 J & (\sigma_Y^2 + \sigma_W^2) J  &  (\sigma_Y^2 + \sigma_W^2) J  \\ (\sigma_Y^2 + \sigma_Z^2) J & 0 & 0 \\  (\sigma_Y^2 + \sigma_Z^2) J & 0 & 0
\end{pmatrix}\qquad \text{ and } \qquad
T =  \frac{3}{N}\begin{pmatrix} 0 & \sigma_Y^2 J & \sigma_Y^2 J \\ \sigma_Y^2 J & 0 & 0 \\ \sigma_Y^2 J & 0 & 0\end{pmatrix},
 \] 
where $J$ is  the $N/3 \times N/3$ matrix of all ones. 

Considering the structure of $T$,
 the EDE \eqref{definition of b} simplifies to a system of two equations for two unknowns $\frak{b}^1=\frak{b}^1(\zeta), \frak{b}^2=\frak{b}^2(\zeta) \in \C$ with block-constant solution $\frak{b}(\zeta) = (\mathfrak{b}^1,\mathfrak{b}^2) \in \C^{N/3} \oplus  \C^{2N/3}$. In fact, it becomes the Dyson equation, which we will discuss in detail in Section~\ref{sec:Hermitian random matrices and the matrix Dyson equation} below, 
 \[
 -\frac{1}{\frak{b}^1} = \zeta + 2\sigma_Y^2 \frak{b}^2\,, \quad -\frac{1}{\frak{b}^2} = \zeta + \sigma_Y^2 \frak{b}^1 \qquad \text{ associated to } \quad H=\begin{pmatrix}  0& Y  \\ Y^*  & 0 \end{pmatrix}\,.
 \]
 
 An elementary calculation shows that $2\1\frak{b}^2(\zeta)\approx -1/\zeta$ for $\abs{\zeta}\ll 1$, i.e. it has a blow up singularity at $\zeta=0$.
In fact, this blow up is the signature of an atom at the origin in the asymptotic spectral density of $H$ due to
 ${\rm{rank}} \1H  < N$.

The side condition \eqref{sidecond} holds outside of a 
complex neighborhood of the support of this spectral density (consisting of two symmetrically positioned intervals on the real line, away from zero and the atom at the origin). 
 When $\sigma_W^2$, and $\sigma_Z^2$ are not too large, relative to $\sigma_Y^2$, the self-consistent pseudospectrum associated to $X$ is  given by two connected sets, separated from 0, and an  atom at 0. 

This is demonstrated in Figure \ref{fig:atom}, which shows the eigenvalues of an $1500 \times 1500$ random matrix $X$ with complex Gaussian entires, $ \sigma_{11}^2  = \sigma_Y^2 = 1$, and $\sigma_Z^2= \sigma_W^2=0.05$.

\begin{figure}[ht]
    \centering
    \includegraphics[width=.5\textwidth]{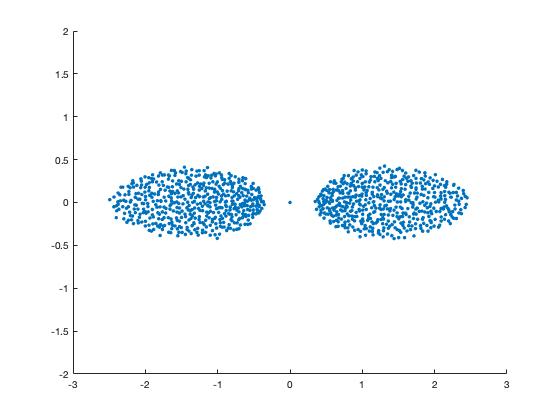}
    \caption{Eigenvalues of random matrix from Example 1}
    \label{fig:atom}
\end{figure}

\end{example}

\begin{example} \label{ex:2}
 \normalfont
Example 2 shows that the self-consistent pseudo-resolvent set of some random matrices may not be connected, i.e. 
apart from its infinite component, it  may also contain a separated island. 
In fact, Example 1 can be used to construct  such a  matrix. Consider the block-diagonal random matrix
\[  \diag(X_1, e^{-\ii \pi/k} X_2, \ldots, e^{-\ii (k-1)\pi/k} X_k) + \wt{X}  \]
where $X_1, \ldots, X_k$ are independent copies of  $X$  from~\eqref{Ematrix},
 $k$ is chosen sufficiently large that the self-consistent spectra of $X_1$ and $e^{-\ii \pi/k} X_2$  overlap.  Thus $k$ rotated copies of the spectrum of  $X$  from Fig.~\eqref{fig:atom}
 eventually enclose a compact region around the origin. Here 
 $\wt{X}$ is $kN \times kN$ random matrix with independent entries having a very small variance. 
  The matrix $\wt{X}$ is added just to make the matrix of variances  of the entire matrix primitive;
  when the variance of its entries is sufficiently small then its addition only causes the self-consistent pseudo-spectrum to change by a little. 

Using the values in the simulation in Example 1, it suffices to take $k=6$ and $\sigma_{\wt{X}}^2 = 0.0001$, as Figure \ref{fig:discon} shows.\\
\begin{figure}[ht]
    \centering
    \includegraphics[width=.5\textwidth]{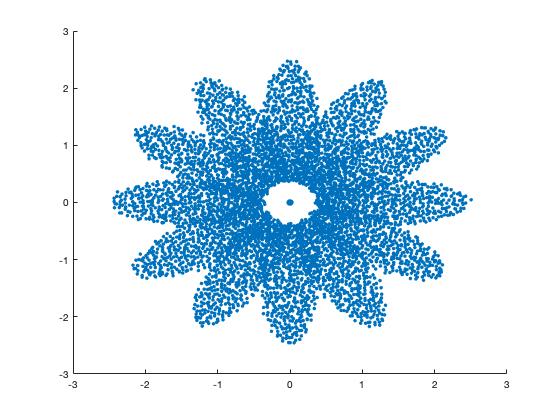}
    \caption{Eigenvalues of random matrix from Example 2}
    \label{fig:discon}
\end{figure}
\end{example}

\begin{example} \label{ex:3}
 \normalfont

Finally, we now consider an example where the location of the spectrum can be explicitly  computed. 
This model represents a system with 2 subnetworks, the connections between elements of different subnetworks, as well as connections between elements of the second subnetwork are all independent, but interconnections between elements of the first component are correlated. This model is of interest in neuroscience, where a network of neurons can be divided into an excitatory and inhibitory network, and the excitatory subnetwork is known to have an over-representation of bi-directional connections \cite{Markram}, \cite{correlationsMarti}.

Let $N$ be an even number, and $X$ be a block random matrix with the following form:

\begin{equation}\label{Ex3}
 X = \begin{pmatrix}  X^{11} & X^{12} \\ X^{21} & X^{22}  \end{pmatrix} ,
 \end{equation}
where $X^{11}$, $X^{12}$, $X^{21}$, and $X^{22}$ are each independent $N/2 \times N/2$ matrices. The matrix $X^{11}$ is an elliptic random matrix, whose entries have variance $\frac{2}{N}\sigma_{11}^2 = \frac{1}{N}$. The remaining matrices each have i.i.d. entries with variance $\sigma^2_{ij}$ in the $(i,j)$ block. 

In this case 
\[
S =  \frac{2}{N}\begin{pmatrix}  \frac{1}{2}J &  \sigma^2_{12} J \\  \sigma^2_{21} J& \sigma^2_{22} J\end{pmatrix},\qquad  T = \frac{1}{N} \begin{pmatrix} \rho J & 0 \\0 & 0 
\end{pmatrix},
\]
 where $J$ is the $N/2 \times N/2$ matrix of all ones. In the appendix, we show that the right edge  $\zeta^*$ 
  of the spectrum can be computed for this model.

In Figure \ref{fig:1}, the eigenvalues of a $2000 \times 2000$ random matrix with complex Gaussian entries with $\sigma^2_{12}=0.05$, $\sigma^2_{21}=0.5$, $\sigma^2_{22} =0.25$, and $\rho = 0.8$. In this case the right most edge point is $\zeta^* \approx 1.29$.

\begin{figure}[ht]
    \centering
    \includegraphics[width=.5\textwidth]{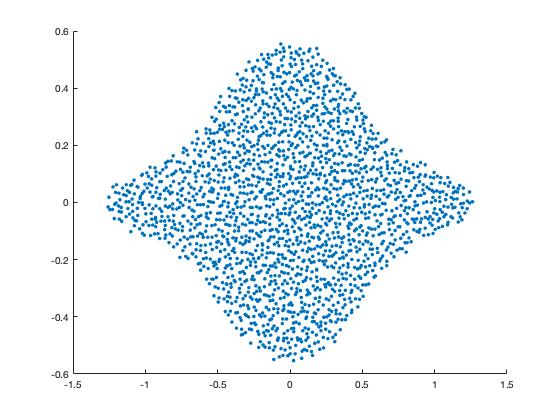}
    \caption{Eigenvalues of random matrix from Example 3}
    \label{fig:1}
\end{figure}
\end{example}

\subsection{Extended strategy of proof}

The proofs of our main results, Theorems~\ref{thm:heat} and \ref{thm:heatelliptictype}, are both based on reducing the understanding of the long time asymptotic of the differential equation \eqref{ode} to understanding the product of the resolvents associated to $X$ and $X^*$.   The solution to \eqref{ode} can be represented as $u_t= e^{t(gX-1)}u_0$ and the matrix exponential is expressed
via contour integration of the resolvent. Therefore 
 with initial value $u_0$, distributed uniformly on the $N$ dimensional unit sphere, 
 the squared norm of the solution, when averaged over the initial conditions, is given by
\begin{align*} \E_{u_0} \|u_t\|^2_2 = &\tr_N e^{t(gX^*-I)}e^{t(gX-I)} \\ =&  \left( \frac{1}{2\ii \pi} \right)^2 \oint_{\gamma} d \zeta_1  \oint_{\ol \gamma} d \ol{\zeta}_2 e^{t(g\zeta_1 + g\ol{\zeta_2} -2)  }  \tr_N (X-\zeta_1)^{-1}   (X^*-\ol{\zeta}_2)^{-1}  ,\end{align*}
where $\gamma$ is a  closed  curve that encloses the eigenvalues of $X$ traversed in the counterclockwise direction and $\ol{\gamma}$ is the same curve traversed in the clockwise direction.   This formula immediately follows 
from the residue theorem. 
 To represent the solution to \eqref{ode} we need to take the matrix exponential of $X$. More generally, we
may consider other functions $f(X)$ of $X$ as well. 
Note that, when $X$ is non-normal, due to the lack of a spectral theorem, 
  $f(X)$ can only be defined for analytic test functions $f$ via  contour integration of the resolvent,
\bels{f(X) in terms of resolvent}{f(X) =  - \frac{1}{2 \pi \ii} \oint_{\gamma} d \zeta f(\zeta) (X-\zeta)^{-1} ,  }
 where the curve $\gamma$ encircles the eigenvalues of $X$  counterclockwise.
Hence  knowledge about the asymptotic  location of the spectrum of $X$ is  necessary to define $ f(X)$. Along the proof of Theorem~\ref{prp:b as resolvent} we will see
 that in a certain sense $\spec(X)$ concentrates on the bounded set $ \cal{R}^c$.
 
The following theorem identifies the limit of $\tr_N f(X) $ and $\tr_N f(X)g(X^*)$ for analytic functions $f$ and $g$ in the general case, i.e. when the entries of $T$ are not required to be non-negative. We need to assume, however, that $f, g$ are analytic essentially on the entire unique 
unbounded connected component of $\cal{R}$, which exists because $\cal{R}^c$ is, by Proposition~\ref{prp:Solution of vector equation}, compact.

We set
\bels{def of Delta zeta}{
\Delta_\zeta := \min\{\frak{r}(\frak{D}_{\abs{ \frak{b}(\zeta)}^2} S)^{-1}-1,1\} ,
}
for $\zeta \in \cal{R}$ and note that $\Delta_\zeta>0$ is equivalent to~\eqref{sidecond}.

\begin{theorem}[Limits of analytic observables]  \label{thm:mainthmgen} 
Let $X$ be an elliptic-type matrix that satisfies Assumptions (A), (B), and \mbox{(2.C-F)}.
There exists a constant $c_*$, depending only on $\abs{\varrho}$ and $L$ in \eqref{eq:prim}, such that the following holds true. 
For any $\eps>0$
let $\cal{R}^\eps$ be the unique   unbounded connected component of $\{\zeta \in \cal{R}: \min\{\abs{\zeta},\Delta_\zeta\} \ge N^{-c_*\1\eps}\}$. Let $f$, $g$ be  analytic functions on $\cal{R}^\eps$ and $\gamma \in \cal{R}^\eps$ be a positively oriented, closed curve encircling all points in the complement of $\cal{R}$ exactly once.  Then we have
\begin{equation}\label{first} \P\left(  \left| \tr_N f(X) +  \frac{1}{2\ii \pi}  \oint_{\gamma} d \zeta f(\zeta) \avg{ \frak{b}(\zeta)}  \right|  \le \frac{  N^\epsilon }{N^{1/2}}  \right)  \ge 1 - \frac{C_{\epsilon ,\nu}}{N^\nu} \,,
 \end{equation}
as well as
\begin{align}\label{second} 
&\P\left(  \left| \tr_N f(X) g(X^*) - \left( \frac{1}{2\ii \pi} \right)^2 \oint_{\gamma} d \zeta_1  \oint_{\ol \gamma} d \ol{\zeta}_2 f(\zeta_1) g(\ol \zeta_2)
K(\zeta_1, \zeta_2)\right|
  \le \frac{ N^\epsilon }{N^{1/2}}  \right) % 
  \ge 1 - \frac{C_{\epsilon,\nu}}{N^\nu} 
\end{align}
for any $\eps\in (0,1)$,  $\nu \in \N$ and a positive constant $C_{\eps,\nu}$,  with 
\begin{equation}\label{def:Kell}
{K(\zeta_1, \zeta_2):=  \frac{1}{N}  \sum_{i,j=1}^N \left[ 
( \frak{D}_{ \frak{b}(\zeta_1)\overline{ \frak{b}(\zeta_2)}}^{-1}- S)^{-1} \right]_{ij}.}
\end{equation} 
 Here $\ol{\gamma}$ is the negative orientation of $\gamma$ and $\frak{b}$ is the
 unique solution to the EDE~\eqref{definition of b}--\eqref{sidecond}. 
 Recall $\avg{ \frak{b}(\zeta)} = \frac{1}{N} \sum_{i=1}^N  \frak{b}_i(\zeta)$. 
 In addition to $\eps,\nu$, the constant $C_{\epsilon,\nu}$ depends on $\| f|_\gamma\|_\infty+ \|g|_\gamma\|_\infty$, i.e. the largest absolute value of $f$ and $g$ on $\gamma$, as well as the  (implicit) constants in Assumptions (B) and (2.C-F). 
\end{theorem}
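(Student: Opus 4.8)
\emph{Reduction.} The plan is to pass to resolvents of $X$ and $X^*$ and then integrate. By the spectral concentration established below (Lemma~\ref{lmm:concentration of spectrum}), with probability at least $1-C_{\eps,\nu}N^{-\nu}$ every eigenvalue of $X$ lies within distance $N^{-c_*\eps}$ of $\cal{E}^c$; since $\gamma\subset\cal{E}^\eps$ winds once around $\cal{E}^c$ and, for a suitable choice of $c_*$, keeps distance $\gtrsim N^{-c_*\eps}$ from $\cal{E}^c$, on this event the functional-calculus formula \eqref{f(X) in terms of resolvent} for $f(X)$ and its counterpart for $g(X^*)$ (with the conjugate contour $\ol\gamma$ encircling $\spec(X^*)=\overline{\spec(X)}$) are valid, and taking $\tr_N$ and interchanging trace with the integrals reproduces, with the signs and orientations as in the strategy of proof, exactly the right-hand sides of \eqref{first} and \eqref{second}. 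Thus the theorem reduces to the pointwise estimates
\begin{align*}
\tr_N (X-\zeta)^{-1}&=\avg{\frak{b}(\zeta)}+\ord\big(N^{-1/2+\eps}\big),\\
\tr_N (X-\zeta_1)^{-1}(X^*-\overline{\zeta_2})^{-1}&=K(\zeta_1,\zeta_2)+\ord\big(N^{-1/2+\eps}\big),
\end{align*}
each holding with probability $1-C_{\eps,\nu}N^{-\nu}$, uniformly for $\zeta,\zeta_1,\zeta_2\in\gamma$. Passing from pointwise to uniform is a routine $N^{-D}$-net argument using Lipschitz continuity of the resolvents and of $\frak{b}$ on $\cal{E}^\eps$; integrating over the length-$\ord(1)$ curve $\gamma$ against $f,g$ (whose sup-norms on $\gamma$ enter the constant), and enlarging $c_*$ and decreasing $\eps$ to absorb the finitely many powers of $N^{c_*\eps}$ produced because the implicit constants degrade as $\zeta\to\partial\cal{E}$, then gives the claim.

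\emph{Hermitization and the single resolvent.} For $\zeta\in\cal{E}^\eps$ introduce the Hermitization
\[
\mathbf{H}_\zeta:=\begin{pmatrix}0 & X-\zeta\\ X^*-\overline\zeta & 0\end{pmatrix},
\]
a $2N\times2N$ Hermitian random matrix; the reciprocal covariances $t_{ij}=\E[x_{ij}x_{ji}]$ correlate transposed entries inside its off-diagonal block, so $\mathbf{H}_\zeta$ is genuinely correlated and not merely of variance-profile type, but each entry is correlated with only $\ord(1)$ others and Assumptions (2.C-F) place it within the scope of the correlated Hermitian local laws of \cite{AjankiCorrelated,EKS,AEKN}. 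The decisive point is that $\zeta\in\cal{E}$ is exactly the condition that $0$ lie in a spectral gap of the associated matrix Dyson equation, of width comparable to $\Delta_\zeta$; the optimal local law outside the self-consistent spectrum then applies at the real point $w=0$ and yields $\mathbf{H}_\zeta^{-1}=M_\zeta+\ord\big(N^{-1/2}\Delta_\zeta^{-C}\big)$ for an absolute constant $C$, with $M_\zeta$ the solution of the matrix Dyson equation. By the chiral symmetry of $\mathbf{H}_\zeta$ at $w=0$, $M_\zeta$ is block-off-diagonal with blocks $\frak{D}_{\frak{b}(\zeta)}$ and $\frak{D}_{\overline{\frak{b}(\zeta)}}$, the scalar self-consistent equation for $\frak{b}$ collapsing to \eqref{definition of b} (only $T$ survives at $w=0$), with existence and uniqueness supplied by Proposition~\ref{prp:Solution of vector equation}; since $[\mathbf{H}_\zeta^{-1}]_{21}=(X-\zeta)^{-1}$, taking $\tr_N$ gives the first estimate -- this underlies the identification $(X-\zeta)^{-1}\approx\frak{D}_{\frak{b}(\zeta)}$ of Proposition~\ref{prp:b as resolvent}.

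\emph{Two resolvents.} Rewriting $\tr_N (X-\zeta_1)^{-1}(X^*-\overline{\zeta_2})^{-1}=\tfrac1N\sum_{a,b}[\mathbf{H}_{\zeta_1}^{-1}]_{21,ab}[\mathbf{H}_{\zeta_2}^{-1}]_{12,ba}$ displays it as a bilinear functional of the resolvents of two Hermitizations differing by a deterministic shift; after a linearization that recasts this product in terms of a single resolvent of a larger block-Hermitian matrix (as in \eqref{contint} and the surrounding discussion), the deterministic limit is produced by inverting the two-body stability operator of the Hermitized Dyson equation. Restricted to the invariant subspace carrying the relevant $(2,1)$--$(1,2)$ block indices and scalarized (diagonal blocks $\leftrightarrow$ vectors in $\C^N$), that operator becomes, up to normalization, $\mathrm{Id}-\frak{D}_{\frak{b}(\zeta_1)\overline{\frak{b}(\zeta_2)}}S$; its invertibility for $\zeta_1,\zeta_2\in\cal{E}$ follows from Property~4 of Proposition~\ref{prp:Solution of vector equation}, and the algebraic identity $(\mathrm{Id}-PS)^{-1}P=(P^{-1}-S)^{-1}$ with $P=\frak{D}_{\frak{b}(\zeta_1)\overline{\frak{b}(\zeta_2)}}$ brings its inverse into precisely the kernel \eqref{def:Kell}. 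The fluctuation is $N^{-1/2}$ times controlled powers of $\|(\mathrm{Id}-PS)^{-1}\|$, hence $\ord(N^{-1/2+\eps})$ for $(\zeta_1,\zeta_2)\in\cal{E}^\eps\times\cal{E}^\eps$; for $T=0$ one has $\frak{b}(\zeta)=-1/\zeta$ and \eqref{def:Kell} reduces to the kernel $N^{-2}\sum_{ij}[(\zeta_1\overline{\zeta_2}-S)^{-1}]_{ij}$ of \cite{EKR}.

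\emph{Main obstacle.} The real difficulty is to make the two-resolvent estimate quantitative, with error $\le N^{-1/2+O(\eps)}$, uniformly up to the boundary of $\cal{E}^\eps$ -- that is, up to distance $\sim\Delta_\zeta$ from the subvariety where $\mathrm{Id}-\frak{D}_{\frak{b}(\zeta_1)\overline{\frak{b}(\zeta_2)}}S$ degenerates, which is exactly where $K$ develops the singularities that, as stressed in the introduction, dominate the double contour integral in the critically tuned regime. Equivalently, one must control the stability operator of the $2N\times2N$ Hermitized problem at energy $0$ when $\frak{r}(\frak{D}_{|\frak{b}(\zeta)|^2}S)$ is close to $1$. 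In \cite{EKR} the analogous bound at $\varrho=0$ came from an explicit symmetrization of a problem that factorizes; here the Hermitized problem does not factorize, and the estimate must be extracted from a quantitative analysis of the extraspectral Dyson equation \eqref{definition of b} -- control of $\frak{b}$, of $\Delta_\zeta$ in \eqref{def of Delta zeta}, and of the geometry of $\cal{E}$ near $\partial\cal{E}$, for which Proposition~\ref{prp:Solution of vector equation} is the starting point. The remaining steps -- the linearization, the net argument, the interchange of trace and integral, the residue theorem, and the absorption of $\Delta_\zeta$-powers into $N^{O(\eps)}$ -- are routine.
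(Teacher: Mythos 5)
Your overall route is the same as the paper's: use Lemma~\ref{lmm:concentration of spectrum} to justify the contour-integral functional calculus on $\gamma\subset\cal{E}^\eps$, reduce \eqref{first} and \eqref{second} to (uniform) approximations of $\tr_N(X-\zeta)^{-1}$ and $\tr_N(X-\zeta_1)^{-1}(X^*-\ol{\zeta}_2)^{-1}$, and obtain these from Hermitization plus the correlated local laws of \cite{EKS}; your direct derivation of \eqref{first} from the $2\times2$ Hermitization is the alternative the paper itself mentions, while the paper instead gets \eqref{first} from \eqref{second} by setting $g=1$ and taking the residue at infinity using $\zeta\frak{b}(\zeta)\to-1$. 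However, the heart of the theorem is exactly the two-resolvent estimate (the paper's Theorem~\ref{thm:resolvents}), and there your argument has a genuine gap. You invoke ``a linearization that recasts the product as a single resolvent of a larger block-Hermitian matrix'' and then assert that ``the fluctuation is $N^{-1/2}$ times controlled powers of $\|(\mathrm{Id}-\frak{D}_{\frak{b}_1\ol{\frak{b}}_2}S)^{-1}\|$'', but no existing local law gives a product of two resolvents directly; the mechanism the paper uses is the $\alpha$-coupled $4N\times4N$ matrix $\bs{H}^{\mathfrak{Z}}_\alpha$ of \eqref{def of H alpha}, the identity $\partial_\alpha(\bs{G}^{\mathfrak{Z}}_\alpha(0))_{31}|_{\alpha=0}=(X-\zeta_1)^{-1}(X^*-\ol{\zeta}_2)^{-1}$, and a three-term decomposition whose errors $\alpha^2N^\eps$ (finite-difference approximation, valid only on the event that $\spec(|\bs{H}^{\mathfrak{Z}}_\alpha|)$ is bounded away from zero, Lemma~\ref{lmm:unifformnoeigs} and Proposition~\ref{prop:gapineig}), $\alpha^{-1}N^{-1+\eps}$ (averaged local law at $z=0$, Proposition~\ref{prp:locallaw}), and $\alpha\Delta_{\mathfrak{Z}}^{-3}$ (second-order Taylor expansion of $\bs{M}^{\mathfrak{Z}}_\alpha$ in $\alpha$, Lemma~\ref{lem:nearzero}) are balanced by the choice $\alpha=N^{-1/2}$. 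None of this is supplied or replaced by an alternative, so the $N^{-1/2+\eps}$ error in \eqref{second} is asserted rather than proved.

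Relatedly, your stability analysis is incomplete in a way that matters precisely for the correlated setting. The stability operator of the $4\times4$-block MDE at $\alpha=z=0$ does not reduce to the single matrix $\mathrm{Id}-\frak{D}_{\frak{b}_1\ol{\frak{b}}_2}S$: it acts blockwise, with diagonal parts of the form $1-\frak{D}_{b_lb_r}Z$ for $Z\in\{S,S^*,T,T^*\}$ and with off-diagonal parts that must be inverted as well (cf.\ \eqref{Lij operators in 2x2 case}--\eqref{definition of lij matrices} and Lemma~\ref{lem:CMnorm}). Invertibility of the $T$-blocks is not a consequence of Property~4 of Proposition~\ref{prp:Solution of vector equation} alone; it requires the comparison $\frak{r}(\frak{D}_{\frak{b}^2}T)\le|\varrho|\,\frak{r}(\frak{D}_{|\frak{b}|^2}S)<1$ obtained from Assumption (2.D) via the Hadamard square-root inequality \eqref{eq:hadspecrad}, together with the quantitative Perron--Frobenius input of Lemma~\ref{lemm:quanprim} (using the primitivity Assumption (2.E)) to make the bound effective up to the $N^{-c_*\eps}$-neighborhood of $\partial\cal{E}$, and the entrywise bounds $|\frak{b}_\zeta|\sim^\xi 1$ of Lemma~\ref{lmm:bbound}. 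This is exactly the ``replacement for the direct symmetrization estimate of \cite{EKR}'' that the introduction flags as the main new difficulty; identifying it as the main obstacle, as you do, is correct, but deferring it (and the linearization bookkeeping above) as routine leaves the proof of \eqref{second} unestablished.
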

Implicit in the use of $K(\zeta_1, \zeta_2)$ is the statement that the matrix {$\frak{D}_{\mathfrak{b}(\zeta_1)\overline{\mathfrak{b}(\zeta_2)}}^{-1}- S$} is invertible for $\zeta_1, \zeta_2\in 
\cal{R}^\epsilon$. The proof of this theorem is given at the end of Section~\ref{sec:MDE}.

\begin{remark} \label{rem:simple}
If all of the entries of $X$ are  independent, 
 in particular $T =0$, then  $\cal{R} = \{ \zeta \; ; \; |\zeta|^2> \frak{r}(S) \}$, $\frak{b}(\zeta)=-1/\zeta$ and 
 Theorem~\ref{thm:mainthmgen} reduces to the main result  Theorem~2.3  of \cite{EKR} with 
\begin{equation}\label{eq:indker} K(\zeta_1, \zeta_2) =  \frac{1}{N} \sum_{i,j=1}^N \left[ ( \zeta_1 \overline{\zeta_2} - S)^{-1} \right]_{ij} . \end{equation}
On the other hand, if $X$ is instead an elliptic random matrix, then $\cal{R} = E_\rho^c$ 
and \eqref{def:Kell} simplifies to
\begin{equation}\label{eq:ellker}K(\zeta_1, \zeta_2) = \frac{ \frak{b}(\zeta_1) \ol{ \frak{b}(\zeta_2)}}{1 -  \frak{b}(\zeta_1) \ol{ \frak{b}(\zeta_2)}}, \end{equation}
 using that the constant vector is the eigenvector corresponding to the only non-zero eigenvalue, $1$, of $S$, 
where $ \frak{b}(\zeta) = \frac{1}{2\varrho}(-\zeta +\sqrt{\zeta^2-4 \varrho})$ solves~\eqref{definition of b}--\eqref{sidecond}. 
The square root is chosen with a branch cut along the segment $[-2\sqrt{\varrho},2\sqrt{\varrho}]	$ so that $\sqrt{\zeta^2-4 \varrho} - \zeta \to 0$ as $\zeta \to \infty$. The formula \eqref{eq:ellker} is used inside the proof of Theorem~\ref{thm:heat} to arrive at the explicit asymptotic expression for $\E_{u_0} \|u_t\|_2^2$ in \eqref{besseldecay}.
We remark that this formula can also be deduced from the non-rigorous calculations in \cite{MC}, even though it does not appear directly; see also \cite{ correlationsMarti}.
\end{remark}

The spectral analysis of a general non-Hermitian random
 matrix $X\in \C^{N\times N}$ starts with its Hermitization, i.e. defining
the $2N\times 2N$ Hermitian matrix
\bels{2x2 symmetrization}{
\bs{H}_{\zeta}=\mtwo{0 & X -\zeta}{X^* -\ol{\zeta}&0}\,,
}
and the corresponding resolvent $\bs{G}_\zeta(z) := (\bs{H}_{\zeta}-z)^{-1}$
with spectral parameter $z\in \C\setminus \spec{\bs{H}_{\zeta}}$.
Note that
\bels{resG}{
    (X-\zeta)^{-1} = \lim_{\eta\to 0+}\bs{G}_\zeta(i\eta)_{21},
}
where $\bs{G}_{21}$ indicates the $N\times N$ lower left block of $\bs{G}_\zeta$.
The main advantage of this Hermitization is that its resolvent is stable for genuinely complex
spectral parameters  $z\in \C\setminus \R$. Additionally, any point $E\in \R$ in its spectrum
can be approached by considering the resolvent at $z=E+\ii\eta$. 
Large $N$ limits of Hermitian random
matrices have therefore been extensively studied, in particular their resolvents become
approximately deterministic with their limit given by the solution $\bs{M}$ 
of a deterministic equation,
the Matrix Dyson equation (MDE), see \eqref{eq:genMDE} later.
The MDE for our special case $\bs{H}_{\zeta}$ will be given in~\eqref{2x2 MDE}
and its solution, $\bs{M}_{\zeta}(z )$ turns out to be a block diagonal matrix,
see~\eqref{form of 2x2 M} later.
  Since $X$ is of elliptic-type, the entries of $\bs{H}_{\zeta}$ have a specific correlation structure which is a  special case of
the general correlations extensively studied in~\cite{AjankiCorrelated, EKS}. In these papers
{\it local laws}, i.e. statements  crudely  of the form
\bels{locl}{  \bs{G}_\zeta(z) \approx \bs{M}_{\zeta}(z ),
}
were proven in the regime $\eta=\Im z\gg N^{-1}$.
While $\bs{H}_{\zeta}$ does not satisfy the basic {\it flatness} assumption   ubiquitous 
in~\cite{AjankiCorrelated, EKS} due to its large zero blocks, our analysis 
will be done in the regime where $\Re z=0$ is outside of the self-consistent
spectrum of $\bs{H}_{\zeta}$. Here flatness is not required for the local law~\eqref{locl}.
 Roughly speaking  this regime is
 equivalent
to $\zeta\in \cal{R}$; at least on the random matrix level we clearly  have 
\bels{specs}{
    \zeta\in \spec{X} \;\; \Longleftrightarrow\;\; 0\in \spec{\bs{H}_{\zeta}}.
}
The main part of our technical work is to make the deterministic counterparts
of the relations~\eqref{resG} and~\eqref{specs}
 rigorous, with effective controls. In particular, the EDE in~\eqref{definition of b}
used to define the set $\cal{R}$ turns out to be one of the components of the MDE
for $\bs{H}_{\zeta}$ in the limit $\eta=\Im z\to 0+$, and the solution $ \frak{b}(\zeta)$ 
of EDE is the boundary value of the diagonal part of the 
(2,1)-component of the  solution matrix $\bs{M}_{\zeta}$.
While intuitively  all these claims are natural, their rigorous proofs are delicate
since they involve interchanging the large $N$ and the $\eta\to 0+$ limits. 
The main reason why this is possible is that outside of the spectrum the corresponding MDE
is stable against small perturbations. Technically, a good lower bound on
 $\min\{\abs{\zeta}, \Delta_\zeta\}$ guarantees stability.

In addition to single resolvents $(X-\zeta)^{-1}$ needed for~\eqref{first}, the product of two resolvents of the form  $(X-\zeta_1)^{-1}(X^*-\ol{\zeta}_2)^{-1}$ at two different spectral parameters $\zeta_1, \zeta_2\in \cal{R}$ is needed to compute $f(X) g(X^*)$  via~\eqref{f(X) in terms of resolvent}. %we need to understand products of two resolvents of the form  $(X-\zeta_1)^{-1}(X^*-\ol{\zeta}_2)^{-1}$ at two different spectral parameters $\zeta_1, \zeta_2\in \cal{R}$.
This product may be viewed as a specific
rational function of $X$ hence it has its own hermitized linearization (see e.g. Appendix
of \cite{bimodal}). However, instead of using the general theory of linearizations, 
in this paper we follow a more direct route. Introducing an additional parameter $\alpha\in \R$
we define a $4N\times 4N$ matrix and its resolvent as
\[
\bs{H}^{(\zeta_1, \zeta_2)}_\alpha := 
\mfour
{0 & 0 & 0 & X^* - \ol{\zeta}_2 }
{ 0 & 0 & X -\zeta_1  & \alpha }
{0 & X^*- \ol{\zeta}_1  & 0 & 0}
{ X - {\zeta}_2& \alpha & 0 & 0}, \qquad  \bs{G}^{ (\zeta_1, \zeta_2)}_\alpha(z) := 
 (\bs{H}^{(\zeta_1, \zeta_2)}_\alpha  - z )^{-1}
 \]
 and use the algebraic relation
 \bels{alg}{
    \frac{d}{d\alpha}\Big|_{\alpha=0} [\bs{G}^{ (\zeta_1, \zeta_2)}_\alpha(0)]_{31}
    = (X-\zeta_1)^{-1}(X^*-\ol{\zeta}_2)^{-1}
 }
 whenever both sides exist,
 where the index indicates the corresponding (3,1)-block. We thus need to understand the 
 deterministic approximations of both sides of the key relation~\eqref{alg} as $N\to\infty$.
 Roughly speaking we need to differentiate, with respect to $\alpha$, the local 
 law for $\bs{G}^{ (\zeta_1, \zeta_2)}_\alpha(z)$ at $\eta=\Im z=0+$. 
 Since we work outside of the spectrum, the MDE for $\bs{H}^{(\zeta_1, \zeta_2)}_\alpha$
 is stable and thus interchanging the $\alpha$-derivative,  the large $N$ limit
 and the $\eta=\Im z\to 0+$  limit can be justified after a careful analysis
 that takes up most of the paper.
 
 We now start the actual proof, by first  giving a brief overview on the MDE theory
in Section~\ref{sec:Hermitian random matrices and the matrix Dyson equation}.

 \bigskip
 
 {\it Conventions.} 
The $N$-independent constants ($\varphi_p, \varrho,c_0,L, c_1,C_1 $) 
 in our assumptions  and the footnote associated to Assumption (2.F)  are called {\it model parameters}. 
In the rest of the paper, for two $N$-dependent positive quantities $a=a_N$ and $b=b_N$ we write $a\lesssim b$
  if there is an $N$-independent constant $C$ such that $a\le Cb$.  The constant $C$ may depend on the model parameters of the corresponding assumptions.
  We write $a\sim b$ if $a\lesssim b$ and $b\lesssim a$ both hold. We will also
  apply this convention entry-wise when $a,b$ are vectors and in the sense of quadratic forms when  $a, b$ are positive definite  Hermitian matrices.  Furthermore, we write $a \approx b$ for $a = b(1+o(1))$ in a locally specified limit.

\bigskip

\section{Hermitian random matrices and the matrix Dyson equation}
\label{sec:Hermitian random matrices and the matrix Dyson equation}

In this section we provide a brief overview of how resolvents $G(z):=(H-z)^{-1}$ of Hermitian random matrices $H=H^*$ are analyzed when their dimension tends to infinity.  Although  most of the discussion would also apply 
 to the much more general setup of decaying correlations from \cite{AjankiCorrelated,EKS}, for concreteness we will only consider $H \in \cal{A}:=\C^{K \times K} \otimes \C^{N \times N}$ of the form
\bels{form of H}{
H = h_0 \otimes I + q \otimes X + q^* \otimes X^*\,,
}
where $q,q^* \in \C^{K \times K}$, $h_0=h_0^* \in \C^{K \times K}$ is self-adjoint and $X$ is a random matrix that  belongs  to the elliptic-type ensemble. Note that the matrices $\bs{H}_{\zeta}$ and $\bs{H}^{(\zeta_1, \zeta_2)}_\alpha$
 from the previous section are of this form
with $K=2$ and $K=4$, respectively. In general, we consider $K$ fixed, then in 
 the limit as $N \to \infty$, the resolvent $G(z)$ is well approximated by a deterministic matrix $M(z) \in \cal{A}$ that satisfies the {\it matrix Dyson equation (MDE)}.  This equation is written in  the form 
\begin{equation} \label{eq:genMDE} M(z)=\Phi_z(M(z)) \,, \qquad \Phi_z(R):= (\E H-z I -  \cal{S}[R])^{-1}\,, \end{equation}  
where $z \in \C$ is the spectral parameter with positive imaginary part, $\im z>0$, and the linear {\it self-energy operator}
 $\cal{S}: \cal{A} \to \cal{A}$ is determined by the covariances among the entries of $H$ through
\bels{general definition of cal S}{
\cal{S}[R]:= \E \, (H-\E H) R(H-\E H)\,.
}
The self-energy operator is self-adjoint with respect to the  natural Hilbert-Schmidt  
scalar product on $\cal{A}$, i.e. $\tr_{KN} R_1^* \cal{S}[R_2] =\tr_{KN} \cal{S}[R_1]^*R_2$, and it is positivity preserving, i.e. it leaves the cone of positive semidefinite matrices invariant. 

Equation \eqref{eq:genMDE} has a unique solution with positive definite imaginary part, i.e. with $M \in \cal{A}_+:=\{R \in \cal{A}: (R-R^*)/2\ii >0\}$.
 Furthermore, \eqref{eq:genMDE} can be viewed as a
  fixed point equation for $M$, where the function $\Phi_z$  on the right hand side  is a contraction in the Carath\'eodory metric on $\cal{A}_+$. 
  Thus it can effectively  be solved by 
  iteration starting from any matrix with positive imaginary part. For details we refer to \cite{Helton01012007}. In particular, $M(z)$ lies inside any closed subset $\cal{B}\subset \cal{A}_+$ that is left invariant by $\Phi_z$.
  
Associated to the solution $M \in \cal{A}_+$ of \eqref{eq:genMDE} is the {\it self-consistent density of states} $\rho$, the probability measure on the real line whose Stieltjes transform is $\tr_{KN} M(z)$. Thus, $\rho$ is uniquely determined by the identity
\bels{scDOS}{
\tr_{KN} M(z) = \int_\R \frac{\rho(\dd \tau)}{\tau -z} \,, \qquad \Im z >0\,.
}
 The support of $\rho$ is called the {\it self-consistent spectrum} of $H$.
 For existence and uniqueness of this correspondence we refer the reader e.g. to \cite{AjankiCorrelated}, Proposition 2.1.

  The resolvent of the random matrix $H$ satisfies a perturbed version of \eqref{eq:genMDE}, namely 
 \[
 1+(z-\E H + \cal{S}[G(z)]) G(z)= D\,,
 \]
where $D=D(z) \in \cal{A}$ is a random error matrix. For $D=0$ we recover the MDE \eqref{eq:genMDE}. Matrices $H$ of the form \eqref{form of H} fall into the general class of Hermitian random matrices with decaying correlations among their entries considered in \cite{EKS}. In particular, $H$ satisfies \cite[Assumptions~(A), (B), (C) and (D)]{EKS} and thus \cite[Theorem~2.1]{EKS} is applicable. Meaning that for $z$ separated from $\rho$, we have that $G$ is well approximated by $M$ in following the sense: 

\begin{lemma}[Optimal local law away from the self-consistent spectrum] \label{lem:oploclaw} There exists a $c>0$ such that for any $z$ satisfying $N^{-c}~\le~\dist(z, \supp \rho) \le N^{100}$, we have that 
\[ \P \big[ \absb{\scalar{x}{(G(z)-M(z))y}}\le \norm{x}_2\norm{y}_2N^{-1/2+\eps}\big] \ge 1-C_{\eps,\nu}N^{-\nu}\,,  \]
for  any $\eps>0$, deterministic vectors $x,y$ and $\nu \in \N$.
\end{lemma}

  For $H$ of the form \eqref{form of H} and $X$ of elliptic-type, we have $\E H = h_0 \otimes I \in \cal{A}^d$, where $ \cal{A}^d\subset \cal{A}$ is the subalgebra  spanned by block diagonal matrices of the form $a\otimes \frak{D}_r$ with $a \in \C^{K \times K}$ and $r \in \C^N$. In particular, we can identify  $\cal{A}^d= \C^{K \times K} \otimes \C^N$, where on $\C^N$ the multiplication is entrywise. Moreover, $\cal{S}$ preserves  $\cal{A}^d$ and, thus, \eqref{eq:genMDE} can be interpreted as an equation on $\cal{A}^d$ instead of $\cal{A}$ and we have $M \in \cal{A}^d$ as well. 

In order to see that the solution $M$ to \eqref{eq:genMDE} will depend analytically on the data $\E H$ and $\cal{S}$ we take the  derivative  of the function  $J(R):= R-\Phi_z(R)$ for $R \in \cal{A}^d$
and find 

\[
\nabla J(R) = \cal{L}_{\Phi_z(R)}  \,, 
\]
 where $\cal{L}_R:\cal{A}^d \to \cal{A}^d$ is a linear map defined through
\bels{gen stability operator}{
\cal{L}_R[Z]:= Z-R(\cal{S}[Z])R\,.
}
We will refer to $\cal{L}=\cal{L}_M=\nabla J(M)$  as the {\it stability operator} associated to the MDE. Since $M$ is invertible by definition, its analytic dependence on the data is ensured by the implicit function theorem as long as $\cal{L}$ is invertible. Note that $\cal{L}$ is  restricted to $\cal{A}^d$ since $J$ preserves the space of block diagonal matrices.  
Hence the main technical information for analyzing the stability of the MDE is the invertibility of its stability operator. This is often a hard problem since  $\cal{L}$ depends on the non-explicit solution to the MDE.

\section{Self-consistent pseudospectrum and resolvent} 
\label{sec:Self-consistent pseudospectrum}

We continue considering elliptic-type matrices with Assumptions (A), (B), and \mbox{(2.C-F)}. 
In this section, we 
describe the asymptotic behavior of the resolvent $ (X-\zeta)^{-1}$ of $X$ in the limit $N \to \infty$ and identify the set of $\zeta$'s for which such description is possible
 with very high probability,  namely the self-consistent pseudo-resolvent set, $\cal{R}$. 
We have three definitions which we will prove are equivalent.  
The first one was given in Definition~\ref{def:Rb}; using that the EDE is the (2,1)-component of the MDE for 
the Hermitization $\bs{H}_{\zeta}$
defined in~\eqref{2x2 symmetrization}, this is the deterministic version of
the identity~\eqref{resG}. This definition is intuitive and easy to understand but in its original form is useless,
  since the EDE is hard to control. The second definition (Definition~\ref{def:self-consistent pseudospectrum} below)
is more complicated as it relies on certain bounds on the entire 
solution $\bs{M}_{\zeta}$ of the MDE for $\bs{H}_{\zeta}$  but will be heavily used in the proofs. Finally, 
we will also give a third equivalent definition in Theorem~\ref{thm:eq}
in terms of the support of the  self-consistent density of states $\rho_\zeta$ for $\bs{H}_{\zeta}$. 

From the third  definition we will see that the self-consistent pseudo-resolvent set provides an asymptotic description 
of the  complement of the $\eps$-pseudospectrum  of $X$ in the regime of consecutive limits $N \to \infty$  first, 
followed by  $\eps \to 0$. 
Here we recall that the  $\eps$-pseudospectrum  of $X$  is defined by the resolvent through
\[{\textstyle \spec_\eps}(X):=\{\zeta \in\C  : \norm{(X-\zeta)^{-1}} >\eps^{-1}\}\,.\]

In Section~\ref{Subsec:Self consistent pseudospectrum via MDE}, we begin by describing the MDE and its solution. We then use the MDE to prove Proposition~\ref{prp:Solution of vector equation}, which provides important properties of the solution to the EDE. Then in Section~\ref{sec:Equivalence of the definitions of R},
we give alternative, equivalent, characterizations of the self-consistent 
pseudo-resolvent set, as well as, certain quantitative version of these equivalences. The proofs of the equivalence of these characterizations are given in Section \ref{sec:Properties of R} after additional properties of the MDE are proven.

\subsection{Self-consistent pseudospectrum via MDE}
\label{Subsec:Self consistent pseudospectrum via MDE}

To study the inverse of $X-\zeta$, we recall  its Hermitization  $\bs{H}_{\zeta}$  
and the corresponding resolvent $\bs{G}_\zeta(z) := (\bs{H}_{\zeta}-z)^{-1}$ 
from~\eqref{2x2 symmetrization}. 
The covariances of the entries of $X$ are encoded in the following operators acting on 
matrices $R\in\C^{N\times N}$:  
\bels{def S and T operators}{ \scr{S}[R] := \E[X   R X^*], \qquad    \scr{T}[R] := \E[X   R X] .  }
Using the standard inner product, $\scalar{A}{B}:= \tr_N A^* B$,  
the adjoints of these operators are
\bels{def S* and T* operators}{  \scr{S^*}[R] = \E[X^*   R X]  , \qquad   \scr{T^*}[R] = \E[X^*   R X^*] .  }
These operators are the key input data for the MDE associated to $\bs{H}_\zeta$, see \eqref{eq:genMDE}, that determines the limiting behavior of the resolvent of $X$. In fact, with 
\bels{A and cal S in 2x2 case}{
\bs{A}_\zeta:=\mtwo{0 & \zeta}{\ol{\zeta} &0}\,, \qquad \cal{S}\mtwo{R_{11} & R_{12}}{R_{21} & R_{22}}:=
\mtwo{\scr{S}[R_{22}] & \scr{T}[R_{21}]}{\scr{T}^* [R_{12}] & \scr{S}^* [R_{11}]}\,,
}
for $R_{ij} \in \C^{N \times N}$, the MDE for $\bs{M}_\zeta = \bs{M}_\zeta(z)$ takes the form
\bels{2x2 MDE}{
\bs{M}_\zeta = \Phi^{2 \times 2}_{z,\zeta}(\bs{M}_\zeta):= -(z + \bs{A}_\zeta + \cal{S}[\bs{M}_\zeta])^{-1}\,,
}
where mostly  we will consider spectral parameters $z=\ii \eta$  with $\eta >0$ on the imaginary axis. The definition of $\cal{S}$ in \eqref{A and cal S in 2x2 case} is consistent with the general definition from \eqref{general definition of cal S}. Note that the operators $\scr{S}$ and $\scr{T}$ from \eqref{def S and T operators} as well as their adjoints from \eqref{def S* and T* operators} leave the space of diagonal matrices invariant, i.e. 
\[
 \scr{S}[\frak{D}_r]= \frak{D}_{Sr}\,,\qquad  \scr{S}^*[\frak{D}_r]= \frak{D}_{S^*r}\,, \qquad \scr{T}[\frak{D}_r]= \frak{D}_{Tr}\,,\qquad \scr{T}^*[\frak{D}_r]= \frak{D}_{T^*r}\,, 
 \]
 for any $r \in \C^N$, where $S$ and $T$ are from \eqref{definition of S and T}.

The subspace of $2 \times 2$-block diagonal matrices with purely imaginary blocks on the diagonal is invariant under the operation 
 $\Phi^{2 \times 2}_{z,\zeta}$ 
 for $z=\ii \eta$. Thus, the unique solution to the MDE with $\Im \bs{M}_\zeta>0$  takes the form
\bels{form of 2x2 M}{
\bs{M}_\zeta(\ii \eta)= \mtwo{\ii \frak{D}_{a_\zeta} & \frak{D}_{b_\zeta}^*}{\frak{D}_{b_\zeta} &\ii \frak{D}_{d_\zeta}} = \mtwo{\ii\1a_\zeta & \ol{b_\zeta}}{b_\zeta &\ii \1d_\zeta }\,,
}
for  some  complex vector  $b_\zeta= b_\zeta(\ii \eta)\in \C^N$ and some positive vectors $a_\zeta= a_\zeta(\ii \eta),d_\zeta= d_\zeta(\ii \eta) \in \R^N$. 
In \eqref{form of 2x2 M} we slightly abused the notation by identifying the diagonal matrix $\frak{D}_u$ with the vector $u \in \C^N$. The self-consistent density  of states (cf. \eqref{scDOS}) associated to $\bs{H}_\zeta$ is denoted by $\rho_\zeta$.

\begin{Proof}
[Proof of Proposition~\ref{prp:Solution of vector equation}]
 The proof of Property~\ref{prop:Uniqueness} is carried out last. 
To prove Properties~\ref{prop:Openness} and \ref{prop:Holomorphicity} we will show  below 
that for any $\zeta \in \cal{R}$  the defining equation  $J_\zeta(\frak{b})=0$ for $\frak{b}=\frak{b}(\zeta)$ with
\bels{Jzeta}{
J_\zeta(b):= 1+ (\zeta +Tb)b
}
is stable at $\frak{b}$ in the sense that its derivative $\nabla J_\zeta(\frak{b})$ is invertible and holomorphic in $\zeta$. By the implicit function theorem this implies that $J_{\zeta +\omega}(b)=0$ has a solution $b=b(\zeta+\omega)$ for any sufficiently small $\abs{\omega}$ and that $\omega \mapsto b(\zeta+\omega)$ is holomorphic. In particular, Property~\ref{prop:Holomorphicity} holds. Furthermore, $\cal{R}$ is open because $J_{\zeta+\omega}(b)$ has a solution and the side condition \eqref{sidecond} remains true for this solution $b$ and small enough $|\omega|$ because of the continuity of $\omega \mapsto b=b(\zeta+\omega)$.

Indeed, the derivative of $J_\zeta(b)$ with respect to $b$ is
\[
\nabla J_\zeta(b) = \frak{D}_{\zeta +Tb}+ \frak{D}_{b}T\,.
\]
Evaluated at the solution $\frak{b}=\frak{b}_\zeta$ we get 
\bels{nabla J}{
\nabla J_\zeta(\frak{b}) = -\frak{D}_{\frak{b}}^{-1}(1-\frak{D}_{\frak{b}}^2T)\,.
}
We show now that this matrix is invertible and, thus, that $J_\zeta(b)=0$ can be solved locally around $(\zeta, \frak{b}_\zeta)$. For the invertibility of $\nabla J_\zeta(\frak{b})$ let $|T|:=(\abs{t_{ij}})_{i,j}\in \R^{N \times N}$. Then using Assumption \eqref{assum:ellcorr} we have
\bels{bound T by S}{  \frak{D}_{|\mathfrak{b}|^2}|T| \preceq |\varrho| \frak{D}_{|\mathfrak{b}|^2} (S^{(1/2)} \odot (S^*)^{(1/2)} ) }
where $S^{(1/2)} $ is defined to be the matrix whose $(i,j)$ entry is $s^{1/2}_{ij}$, $\odot$ is the Hadamard product, and $\preceq$ is the entry-wise inequality on matrices, i.e. $A \preceq B$ if for all $i,j$, $ a_{ij}\le b_{ij}$.
Then using that for any matrices $A,B$ with non-negative entries, \begin{equation} \label{eq:hadspecrad} \frak{r}(A^{(1/2)}\odot B^{(1/2)}) \leq \frak{r}(A)^{1/2}\frak{r}( B)^{1/2} \end{equation} (see, for instance \cite{altenberg2013sharpened}, Theorem 13) we have 
\bels{bound SSstar by S}{ \frak{r}\Big( \frak{D}_{|\mathfrak{b}|^2} (S^{(1/2)} \odot (S^*)^{(1/2)} )  \Big)\leq  \frak{r}( \frak{D}_{|\mathfrak{b}|^2} S ),}
where we used that $\frak{D}_{|\mathfrak {b}|^2}S$ and $\frak{D}_{|\mathfrak {b}|^2}S^*$ have the same spectral radius.  Then using the side condition, \eqref{sidecond}, and that for any two non-negative matrices $A,B$ such that $A  \preceq B $, we have

\begin{equation} \label{eq:posradbound}
\mathfrak{r}(A) \leq  \mathfrak{r}(B),
\end{equation} we conclude
\begin{equation} \label{eq:gennonherm} \frak{r}(\frak{D}_{\mathfrak {b}^2} {{T}})\le  \frak{r}(\frak{D}_{|\mathfrak {b}|^2} {\abs{T}}) \le|\varrho| <1 \end{equation} for  $\zeta\in {\cal{R}}$. This proves invertibility of the derivative \eqref{nabla J}. 

To see that $0 \not \in \cal{R}$ first observe that $b=0$ never solves the EDE. 
If $b$ were a nonzero solution to $J_\zeta(b) =0$ at $\zeta =0$, then this equation would be  equivalent to $\frak{D}_{b^2}Tb=-b$, 
$b\ne0$, which contradicts \eqref{eq:gennonherm}. Thus, $J_0(b) =0$ does not have a solution $b$. 

To finish the proof of Property~\ref{prop:Openness} it remains to show $\{\zeta : \abs{\zeta}> C\} \subset \cal{R}$ for some $C>0$. For this purpose consider the equation
\[
0=J_{-\frac{1}{\omega}}(\omega\1 q)= 1-q+\omega^2 \1 qTq=:\wh{J}_\omega(q)
\]
for a complex number $\omega$ and $q \in \C^N$. Since $\nabla\wh{J}_0(q) =-1$ is invertible, the implicit function theorem
 implies that the equation $\wh{J}_\omega(q)=0$ 
 has a locally unique solution $q(\omega)$ in a neighborhood of $(\omega,q)=(0,1)$.  With the translation $\zeta = -\frac{1}{\omega}$ and $b(\zeta) = -\frac{1}{\zeta} \1q\pb{-\frac{1}{\zeta}}$ we have a solution $b=b(\zeta)$ to $J_\zeta (b)=0$ for large enough $\abs{\zeta}$. This solution clearly satisfies the side condition \eqref{sidecond} as $\abs{\zeta} \gg 1$. It also satisfies Property~\ref{prop:Asymptotics}. 

It remains to verify the uniqueness from Property~\ref{prop:Uniqueness}. 
For  $\zeta\in \cal{R}$ let  $\frak{b}=\frak{b}(\zeta)$ be a solution to 
 \eqref{definition of b} and \eqref{sidecond}.
We consider the MDE from \eqref{2x2 MDE}. As was explained in \eqref{form of 2x2 M}, the solution to this MDE is block diagonal, i.e. we can interpret $\bs{M}_\zeta(z) \in \C^{2 \times 2} \otimes \C^N \subset \C^{2N \times 2N}$. Thus, \eqref{2x2 MDE} is equivalent to $\cal{J}_{z,\zeta}(\bs{M})=0$ where $\cal{J}_{z,\zeta}: \cal{D} \to \C^{2 \times 2} \otimes \C^N$ 
is the restriction of $\bs{M} \mapsto \bs{M} - \Phi^{2 \times 2}_{z,\zeta}(\bs{M})$ to $\cal{D}=\cal{D}_{z,\zeta}$, the set of all  $\bs{M} \in \C^{2 \times 2} \otimes \C^N$ such that $\Phi^{2 \times 2}_{z,\zeta}(\bs{M})$ exists. In particular, 
 the directional derivative $\nabla_{\bs{R}}\cal{J}_{z,\zeta}(\bs{M})$ of $\cal{J}_{z,\zeta}$
  in the direction $\bs{R}\in  \C^{2 \times 2} \otimes \C^N$ is given by 
 \[
\nabla_{\bs{R}}\cal{J}_{z,\zeta}(\bs{M})= \bs{R} - \Phi^{2 \times 2}_{z,\zeta}(\bs{M})\cal{S}[\bs{R}]\Phi^{2 \times 2}_{z,\zeta}(\bs{M})\,,
\]
where 
\[
\cal{S}[\bs{R}]=\cal{S}\mtwo{r_{11} & r_{12}}{r_{21} & r_{22}} = \mtwo{Sr_{22} & Tr_{21}}{T^*r_{12} & S^*r_{11}}\,.
\]
Setting 
\bels{bs M zeta 0}{
\bs{M}_\zeta^{(0)}:= \mtwo{0 & \frak{b}^*}{\frak{b} &0}, \qquad  \frak{b}  = \frak{b}(\zeta),
}
 from the EDE~\eqref{definition of b} 
we see that $\cal{J}_{0,\zeta}(\bs{M}_\zeta^{(0)})=0$. Therefore the directional derivative of $\cal{J}_{0, \zeta}$ at
$\bs{M}_\zeta^{(0)}$ is given by
\bels{Def of L zeta}{
\cal{L}_\zeta\bs{R}:=\nabla_{\bs{R}}\cal{J}_{0, \zeta}(\bs{M}_\zeta^{(0)})=\bs{R}- \bs{M}_\zeta^{(0)}\cal{S}[\bs{R}]\bs{M}_\zeta^{(0)}
=\mtwo{L_{11}r_{11} & L_{12} r_{12}}{L_{21}r_{21} & L_{22} r_{22}}
\,
}
with the $N \times N$ matrices $L_{ij}$ defined as
 \bels{definition of lij matrices}{
 &L_{11} = 1 - \frak{D}_{|{\frak{b}}|^2}S^*\,,
\qquad 
L_{12} = 1 - \frak{D}_{{{\frak{b}}}^2}^*T^*\,,
\qquad
L_{21} = 1 - \frak{D}_{{{\frak{b}}}^2}T\,,
\qquad 
L_{22} = 1 - \frak{D}_{|{\frak{b}}|^2}S\,.
 }
The side condition \eqref{sidecond} and $\frak{r}( \frak{D}_{{{\frak{b}}}^2}T)\le \frak{r}(\frak{D}_{\abs{{\frak{b}}}^2} S)$
(from \eqref{bound T by S}--\eqref{eq:gennonherm})  together ensure the
  invertibility of all matrices $L_{ij}$, hence the invertibility of  $\cal{L}_\zeta$  by \eqref{Def of L zeta} because $\cal{L}_\zeta$ leaves each block invariant. Thus, $\cal{J}_{z,\zeta}(\bs{M})=0$ has a unique local solution $\bs{M}=\wt{\bs{M}}_\zeta(z)$ in a neighborhood of $(z,\bs{M})=(0,\bs{M}_\zeta^{(0)})$. 
   In particular, $\wt{\bs{M}}_\zeta(0) = \bs{M}_\zeta^{(0)}=\wt{\bs{M}}_\zeta(0)^*$ is self-adjoint.
  
   Taking the  inverse on both sides \eqref{2x2 MDE}  and then the derivative  
  with respect to $\eta = \Im z$ yields 
  \[
  \frac{1}{\bs{M}_\zeta}(\partial_{\eta} \bs{M}_\zeta )  \frac{1}{\bs{M}_\zeta^*} = \ii + \cal{S}[\partial_{\eta} \bs{M}_\zeta ]\,.
  \]
From this we conclude 
$\cal{L}_\zeta[\partial_\eta \wt{\bs{M}}_\zeta(0)]=  \ii\wt{\bs{M}}_\zeta(0)^2$ because $\wt{\bs{M}}_\zeta(0)$ is self-adjoint. 
 Note that each component of $\frak{b}$ is nonzero by the EDE, hence $\wt{\bs{M}}_{\zeta}(0)^2$ is strictly positive definite. 
 Moreover,  since $\cal{L}_{\zeta}^{-1}$ is positivity preserving we obtain that $\Im \wt{\bs{M}}_\zeta(\ii \eta)$ is 
strictly  positive definite for  all  sufficiently small $\eta>0$.  In particular, the local solution 
$\wt{\bs{M}}_\zeta(z)$ on the imaginary axis 
 coincides with the usual MDE solution discussed in this section, i.e. $\wt{\bs{M}}_\zeta(\ii \eta) = {\bs{M}}_\zeta(\ii \eta)$. Since ${\bs{M}}_\zeta(\ii \eta)$ is uniquely defined as the 
solution to~\eqref{2x2 MDE} with positive definite imaginary part and $\frak{b} =( \bs{M}_\zeta^{(0)})_{21}= ( \lim_{\eta \downarrow 0} {\bs{M}}_\zeta(\ii \eta))_{21}$ by construction, we conclude that $\frak{b}$ is unique.  
\end{Proof}

The following corollary collects some important insights from the proof of Proposition ~\ref{prp:Solution of vector equation}
that will be used later.

\begin{corollary}\label{crl:proof of prop 2.4}
For any $\zeta \in \cal{R}$ the Hermitian matrix $\bs{M}^{(0)}_\zeta \in \C^{2 \times 2} \otimes \C^N\subseteq \C^{2N \times 2N}$ from \eqref{bs M zeta 0} is a solution of the MDE \eqref{2x2 MDE} at $z=0$. It is obtained as a limit of solutions on the upper half plane with positive definite imaginary parts, i.e. $\bs{M}^{(0)}_\zeta = \lim_{\eta \downarrow 0}\bs{M}_\zeta(\ii \eta)$ where  $\bs{M}_\zeta(\ii \eta)$ solves \eqref{2x2 MDE} at $z=\ii \1\eta$ with  the  side condition that $\im \bs{M}_\zeta(\ii \eta)$ is positive definite. 
Furthermore, the associated stability operator $\cal{L}_\zeta $ defined  in \eqref{Def of L zeta} is invertible.
\end{corollary}

\subsection{Equivalent definitions of  $\cal{R}$}
\label{sec:Equivalence of the definitions of R}

We now introduce an equivalent definition of $\cal{R}$ which involves the entire solution to the MDE, \eqref{eq:MDE}.  

\begin{definition}[self-consistent pseudospectrum via MDE] \label{def:self-consistent pseudospectrum} Let $\bs{M}_\zeta=\bs{M}_\zeta(\ii \eta)$ be the solution to the MDE \eqref{2x2 MDE} of the form \eqref{form of 2x2 M} associated to the Hermitization $\bs{H}_\zeta$ of $X-\zeta$ from \eqref{2x2 symmetrization}. Then we define
\bels{hatRM}{
 \wh{\cal{R}} :&=\Big\{ \zeta \in \C  : \limsup_{\eta \2\downarrow \20}  \frac{1}{\eta} \|\Im 
  \bs{M}_\zeta(\ii \eta)\|
< \infty \Big\}
\\
&=\Big\{ \zeta \in \C  : \limsup_{\eta \2\downarrow \20}  
\frac{1}{\eta} \max\{ \|a_{\zeta}(\ii \eta)\|_\infty ,\|d_{\zeta}(\ii \eta)\|_\infty    \} < \infty \Big\} ,
 }
  where in the second equality we used that $\Im 
  \bs{M}_\zeta(\ii \eta)$ is diagonal. 
\end{definition}

The following theorem shows the equivalence of the two definitions of the self-consistent
pseudo-resolvent set and it also presents a third alternative definition:

\begin{theorem}[Equivalence of the definitions]\label{thm:eq}
We assume (A), (B), and \mbox{(2.C-F)}. Then the set $\cal{R}$ from Definition~\ref{def:Rb}
coincides with $\wh{\cal{R}}$ from Definition~\ref{def:self-consistent pseudospectrum}, i.e. 
$\cal{R}=\wh{\cal{R}}$.
Furthermore, for the self-consistent pseudo-resolvent we have
\bels{ABDetaat0} {  \frak{b}(\zeta)= \lim_{\eta \downarrow 0 } b_\zeta(\ii \eta), \qquad \mbox{for any $\zeta\in \cal{R}$,} }
where the left hand side is the unique solution to \eqref{definition of b} and \eqref{sidecond} and $b_\zeta$ on the left hand side is the off-diagonal contribution to the solution of MDE from \eqref{form of 2x2 M}. 
Finally, as a third alternative characterization of $\cal{R}$ we also have
\bels{third}{
\cal{R}= \{\zeta\in \C\; : \; 0\not\in \rm{supp} ( \rho_\zeta)\}.
}
\end{theorem}

All three characterizations of the self-consistent pseudo-resolvent set  are qualitative, they
 lack an effective control. To remedy this situation, 
 we define the following quantitative versions of~\eqref{hatRM}, 
 \eqref{sidecond} in Definition~\ref{def:Rb} and \eqref{third}:
 \bels{pseudorestau}{
\wh{\cal{R}}_\tau:=\Big\{ \zeta \in \C  : \limsup_{\eta \2\downarrow \20}  
\frac{1}{\eta} \max\{ \|a_{\zeta}(\ii \eta)\|_\infty ,\|d_{\zeta}(\ii \eta)\|_\infty    \} < \frac{1}{\tau} \Big\} 
}
 \bels{rtilde}{
 \wt{\cal{R}}_\delta:= \{ \zeta\in \cal{R} \; :\; \min\{\abs{\zeta}, \Delta_\zeta\} \ge\delta\},
 \qquad \cal{R}^\#_\omega:= \{ \zeta\in \cal{R} \; :\; \rho_\zeta([-\omega, \omega])=0\}
 }
 for any positive control parameters  $\tau, \delta,\omega$  (recall that $\Delta_\zeta$ was defined in~\eqref{def of Delta zeta}).
 Hence  Theorem~\ref{thm:eq} asserts that
 \[
    \cal{R}=  \bigcup_{\delta>0} \wt{\cal{R}}_\delta = \bigcup_{\omega>0}\cal{R}^\#_\omega
    =\bigcup_{\tau>0} \wh{\cal{R}}_\tau,
 \]
 where we also used that $0\not\in\cal{R}$.
 The following proposition, to be proven in Section~\ref{sec:Properties of R},
  is a more precise and effective version of Theorem~\ref{thm:eq}
 that compares the sets $\wt{\cal{R}}_\delta, \wh{\cal{R}}_\tau$ and $\cal{R}^\#_\omega$.

 \begin{proposition}\label{prp:eff} Under the conditions (A), (B), and \mbox{(2.C-F)}
 there exists a  (small) constant $c>0$ depending only on the model parameters
 and a  positive integer power $p\in \N$  depending only on $|\rho|$ and $L$ in~\eqref{eq:prim} 
 such that the following
 containments hold:
 \bels{RR}{
  \wh{\cal{R}}_\tau \subset  \wt{\cal{R}}_{c\tau} \cup\{|\zeta|\le c\tau\}\qquad 
     \wt{\cal{R}}_{\delta} \subset {\cal{R}}^\#_{c\delta^p},
         \qquad \cal{R}^\#_{\omega} \subset 
     \wh{\cal{R}}_{c\omega^p}, 
}
for any small $\tau, \delta, \omega>0$.
 \end{proposition}

Proposition~\ref{prp:eff} shows that  lower bounds
 on  $\Delta_\zeta$, on $\mbox{dist} (\mbox{supp}\rho_\zeta, 0)$
 and  on  $\eta/\|\Im \bs{M}_\zeta(\ii\eta)\|$
are polynomially comparable with each other. Moreover, all 
these three quantities depend smoothly on $\zeta\in \cal{R}$
but we cannot  effectively control their derivatives near the boundary of $\cal{R}$.
Therefore,  while the positivity of 
any (hence all) of these quantities characterize the set $\cal{R}$,  unfortunately
we do not have an effective bound that would relate
the size of these comparable quantities with the distance of $\zeta$ 
to the boundary of $\cal{R}$. We will be able to do this
near the rightmost point of $\cal{R}^c$
in the
case when $t_{ij}\ge 0$ (Section~\ref{sec:ellcorr}).

\subsection{Properties of $\bf{M}$ and $\mathfrak{b}$}
\label{sec:Properties of R}

In this section we demonstrate the usefulness of the alternative quantitative definition of the self-consistent pseudo-resolvent set from \eqref{pseudorestau}. We use it to prove several properties of $\bs{M}_\zeta$ and its off-diagonal entry $b_\zeta$ from \eqref{form of 2x2 M}.
Finally we prove Proposition~\ref{prp:eff} and thus Theorem~\ref{thm:eq}, translating these properties of $b_\zeta$ to the solution $\frak{b}$ of EDE via \eqref{ABDetaat0}.

\begin{lemma} \label{lemma:boundedM}
Fix  $\tau \in (0,1]$.  For any $\zeta\in \wh{\cal{R}}_\tau$  the limit $\bs{M}_\zeta(0): = \lim_{\eta\downarrow 0} \bs{M}_\zeta(\ii\eta)$ exists and satisfies the upper bound
$\norm{\bs{M}_\zeta(0)} \lesssim \frac{1}{\tau}$. The diagonal elements $a_\zeta$ and $d_\zeta$ from \eqref{form of 2x2 M}  vanish in this limit, i.e. $a_\zeta(0)=d_\zeta(0)=0$ and the off-diagonal element $b_\zeta(0)=\lim_{\eta \to 0+} {b}_{\zeta}(\ii \eta )$ is a solution to the 
extraspectral Dyson equation \eqref{definition of b}  and \eqref{sidecond}.  
In particular, \eqref{ABDetaat0} holds for $\zeta\in \wh{\cal{R}}_\tau$, hence $\wh{\cal{R}}=\cup_{\tau>0} \wh{\cal{R}}_\tau
\subset \cal{R}$. 
 Moreover,  $\wh{\cal{R}}_{\tau=1}$ contains a neighborhood of infinity, i.e. 
\bels{Eps}{
\{\zeta: \abs{\zeta} > C\} \subset \wh{\cal{R}}_1
}
for some large constant $C>0$ and on this neighborhood $\bs{M}_\zeta$ satisfies
\bels{largezeta}{
\norm{\bs{M}_\zeta(\ii \eta )} \lesssim \frac{1}{\eta +\abs{\zeta}}\,, \qquad \abs{\zeta}>C\,.
}
\end{lemma}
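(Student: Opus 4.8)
The strategy is to treat the MDE \eqref{2x2 MDE} at $z=\ii\eta$ as a perturbation of a nearby equation at $\eta=0$ and to extract quantitative control from the very definition of $\wh{\cal{E}}_\tau$. First I would recall that, by the form \eqref{form of 2x2 M}, the solution $\bs{M}_\zeta(\ii\eta)$ is entirely encoded by the vectors $a_\zeta,d_\zeta\in\R_+^N$ and $b_\zeta\in\C^N$, and that the condition $\zeta\in\wh{\cal{E}}_\tau$ means $\limsup_{\eta\downarrow 0}\eta^{-1}\max\{\|a_\zeta(\ii\eta)\|_\infty,\|d_\zeta(\ii\eta)\|_\infty\}<1/\tau$. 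Thus there is $\eta_0>0$ such that $\|a_\zeta(\ii\eta)\|_\infty+\|d_\zeta(\ii\eta)\|_\infty\lesssim\eta/\tau$ for $\eta\le\eta_0$. Writing the scalar components of \eqref{2x2 MDE} explicitly (inverting a $2\times2$ block, which is available because $\bs M_\zeta\in\cal A_+$ so the relevant blocks are invertible with quantitative bounds), one gets, componentwise,
\[
\ii a_\zeta = \frac{-\ii\eta - \scr S[\ii d_\zeta]}{(\ii\eta+\scr S[\ii d_\zeta])(\ii\eta + \scr S^*[\ii a_\zeta]) - \abs{\zeta + \scr T[\,\overline{b_\zeta}\,]}^2}\,,
\]
and the analogous identities for $d_\zeta$ and $b_\zeta$; since $\scr S[\frak D_r]=\frak D_{Sr}$ etc., these are genuinely $N$-dimensional vector equations for $(a_\zeta,d_\zeta,b_\zeta)$. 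The point is that the numerator in the $a_\zeta$ and $d_\zeta$ equations is $O(\eta)$, so $a_\zeta,d_\zeta=O(\eta)$ is self-consistent precisely when the denominator stays bounded away from zero — and that is exactly what membership in $\wh{\cal E}_\tau$ guarantees, with a lower bound of order $\tau^2$ on $\abs{\,\cdot\,}$ of the denominator uniformly in $\eta\le\eta_0$. From the same denominator bound one reads off $\norm{\bs M_\zeta(\ii\eta)}\lesssim 1/\tau$ uniformly for small $\eta$.

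Next I would establish existence of the limit $\bs M_\zeta(0)$. Here I would invoke the real analyticity of $\eta\mapsto\bs M_\zeta(\ii\eta)$ (forward reference to Lemma~\ref{lem:nearzero}, which the problem statement allows me to assume): combined with the uniform bound $\norm{\bs M_\zeta(\ii\eta)}\lesssim1/\tau$ on $(0,\eta_0]$ and a uniform Lipschitz estimate in $\eta$ — obtained by differentiating the fixed-point equation and using invertibility of the stability operator $\cal L_{\bs M_\zeta}$, whose inverse is controlled once the denominator above is bounded below — one gets a Cauchy condition as $\eta\downarrow 0$, hence a limit $\bs M_\zeta(0)$ obeying the same norm bound. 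Then I pass to the limit $\eta\downarrow0$ in the componentwise equations: the $O(\eta)$ numerators force $a_\zeta(0)=d_\zeta(0)=0$, and the surviving off-diagonal equation becomes $\frak b_\zeta(\zeta + \scr T[\,\overline{\frak b_\zeta}\,])=\frak b_\zeta(\zeta+ T\frak b_\zeta)=-1$ — wait, more carefully: at $\eta=0$ the block structure degenerates and the equation for $b_\zeta$ reduces to $1+(\zeta + T b_\zeta)b_\zeta=0$, which is precisely \eqref{definition of b}. (One must keep track that $\scr T[\frak D_r]=\frak D_{Tr}$ and take complex conjugates correctly, but there is no new idea there.)

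Finally, for the neighborhood-of-infinity statement \eqref{Eps} and the decay \eqref{largezeta}, I would argue directly from \eqref{2x2 MDE}: for $\abs\zeta$ large, the Neumann-type expansion $\bs M_\zeta(\ii\eta) = -(z+\bs A_\zeta+\cal S[\bs M_\zeta])^{-1}$ can be iterated starting from $-\bs A_\zeta^{-1}$ — note $\norm{\bs A_\zeta^{-1}}=1/\abs\zeta$ — and since $\cal S$ is a bounded operator (with norm controlled by the model parameters via Assumption~(B)), a standard contraction/continuity argument on the closed ball $\{\norm{\bs M}\le 2/(\eta+\abs\zeta)\}$, which is $\Phi^{2\times2}_{z,\zeta}$-invariant for $\abs\zeta>C$, gives both that this ball contains the solution and the bound \eqref{largezeta}; evaluating the resulting estimate on $a_\zeta,d_\zeta$ shows $\eta^{-1}\max\{\|a_\zeta\|_\infty,\|d_\zeta\|_\infty\}<1$ for $\abs\zeta>C$, i.e. $\zeta\in\wh{\cal E}_1$. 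The main obstacle is the middle step: obtaining the uniform-in-$\eta$ lower bound on the denominator (equivalently, the uniform invertibility of $\cal L_{\bs M_\zeta}$ and control of $\norm{\bs M_\zeta}$) purely from the $\wh{\cal E}_\tau$ condition, since that condition only a priori controls $a_\zeta,d_\zeta$ and one must bootstrap it into control of the full solution including the off-diagonal part $b_\zeta$ and the stability operator; the positivity-preserving structure of $\cal S$ and the $\cal A_+$-membership of $\bs M_\zeta$ are what make this possible.
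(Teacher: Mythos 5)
There are two genuine gaps, both at the places you yourself flag as delicate. First, the bound $\norm{\bs{M}_\zeta(\ii\eta)}\lesssim 1/\tau$: membership in $\wh{\cal{E}}_\tau$ only controls the imaginary diagonal parts $a_\zeta,d_\zeta$, and your claim that the Schur-complement denominator bound then lets you ``read off'' the full norm bound is not substantiated for the off-diagonal part $b_\zeta$. From \eqref{MDE12} one only gets $\|b\|_\infty\le C\tau^{-1}(|\zeta|+\|b\|_\infty)$, which does not close for small $\tau$; so the bootstrap from $(a,d)$ to $b$ is exactly the missing step, not a routine consequence. The paper closes it by a different mechanism: the matrix Stieltjes representation \eqref{St} with compactly supported positive measure $\bs{V}_\zeta$ (from \cite{AjankiCorrelated} and \cite{AEKband}), plus Cauchy--Schwarz, which bounds $\norm{\bs{M}_\zeta(\ii\eta)}$ by $\eta^{-1}\norm{\Im\bs{M}_\zeta(\ii\eta)}$ up to constants, and $\Im\bs{M}_\zeta(\ii\eta)$ is precisely $\diag(a_\zeta,d_\zeta)$ by \eqref{form of 2x2 M}. (An algebraic repair of your route does exist: \eqref{Aeq} gives $\eta\abs{b}^2\le a$, hence $\|b\|_\infty\lesssim\tau^{-1/2}$, but you would have to say this; as written the step is asserted, not proved.) Second, the existence of the limit $\eta\downarrow 0$: invoking the analyticity from Lemma~\ref{lem:nearzero} is circular, because that lemma rests on Lemma~\ref{lem:CMnorm}, hence on Lemma~\ref{lem:specrad}, Lemma~\ref{lmm:bbound} and on the very definition \eqref{ABDetaat0} of $\frak{b}_\zeta$ at $\eta=0$ --- all of which presuppose the present lemma. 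Your fallback, a uniform-in-$\eta$ Lipschitz bound via invertibility of the stability operator $\cal{L}_{\bs{M}_\zeta(\ii\eta)}$, is likewise not available at this stage: its inverse is controlled only through the spectral-radius gap (Lemma~\ref{lem:specrad}, Lemma~\ref{lemm:quanprim}, primitivity (2.E)) and through lower bounds on $\abs{\frak{b}_\zeta}$ that degenerate near $\zeta=0$, whereas the lemma must cover every $\zeta\in\wh{\cal{E}}_\tau$. The paper instead cites the implication (i)$\Rightarrow$(iii) of Lemma~D.1 in \cite{AEKband}, whose hypothesis is exactly the defining property \eqref{pseudorestau}.

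A smaller gap: for \eqref{Eps} you conclude $\eta^{-1}\max\{\|a_\zeta\|_\infty,\|d_\zeta\|_\infty\}<1$ ``by evaluating the resulting estimate,'' but the norm bound $\norm{\bs{M}_\zeta(\ii\eta)}\lesssim(\eta+\abs{\zeta})^{-1}$ alone does not yield $a_\zeta,d_\zeta=O(\eta)$; one must feed it back into \eqref{Aeq} and \eqref{Deq} to get $\|a\|_\infty+\|d\|_\infty\lesssim\abs{\zeta}^{-2}(\eta+\|a\|_\infty+\|d\|_\infty)$, as the paper does. Your invariant-ball/Neumann argument for \eqref{largezeta} itself is fine and matches the paper's, and the derivation of \eqref{definition of b} from \eqref{Bequation} in the limit is correct; but without the two items above the core assertions of the lemma (the uniform bound and the existence of $\bs{M}_\zeta(0)$) are not established.
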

Before proving Lemma~\ref{lemma:boundedM} we record equations for the functions $a_\zeta=a_\zeta(\ii\eta),b_\zeta=b_\zeta(\ii\eta)$ and $d_\zeta=d_\zeta(\ii\eta)$ from the representation \eqref{form of 2x2 M} of $\bf{M}_\zeta$ that correspond to each of the blocks in \eqref{2x2 MDE}.

After multiplying \eqref{2x2 MDE} by the inverse of the right hand side and  rearranging the $(2,2)$ entry we find (dropping the subscript $\zeta$ and argument $\ii\eta$ for brevity)
\begin{equation}
\label{Bequation} 1+(Tb + \zeta)b =     a(S^*d+ \eta ) .
\end{equation}
Applying the Schur complement formula to the $(1,1)$ and $(2,1)$ entries of \eqref{2x2 MDE} and taking the inverse in both cases shows
\begin{equation} \label{MDE11} \frac{1}{a} =   S^*d+\eta  + \frac{\abs{Tb+ \zeta}^2}{S a+\eta}  \end{equation}
and
 \begin{equation} \label{MDE12} b = - \frac{a(T^*\ol{b} + \ol{\zeta})}{S a+\eta}. \end{equation}
Multiplying \eqref{MDE11}  by $a^2$  and then substituting \eqref{MDE12} and its adjoint leads to
\begin{equation} \label{Aeq} a =   ( S a+ \eta  ) \abs{b}^2 +  ( S^*d+ \eta)a^2 . \end{equation}
In a similar fashion we  also get
\begin{equation} 
 \label{Deq} d =    ( S^*d+ \eta  ) \abs{b}^2 +  ( Sa+ \eta)d^2 . \end{equation}

Note that the equations \eqref{Aeq}, \eqref{Deq} involve $b$ in an essential way, unlike  in \cite{EKR}, where $T=0$.
In fact, the case $T=0$, \eqref{Bequation} reduces to  
\[ b = - \frac{a  \ol{\zeta} }{S a+\eta} . \]
After substituting this relationship into \eqref{Aeq} one has
\[ a =   \frac{ S a+ \eta }{|\zeta|^2 +  ( S a+ \eta  ) ( S^*d+ \eta)} \le\frac{ S a+ \eta }{|\zeta|^2}, \]
and a similar relation for $d$. Then in \cite{EKR} the limiting behavior of $a$, as $\eta \to 0$, was deduced from the spectral radius of $S/|\zeta|^2$.
In the $T\ne 0$ case, the solution involves all three variables $a, d,$ and  $b$. The equation for $b$ is particularly critical when  $T$ has some negative entries. In fact, in order to solve \eqref{Bequation}
with the usual MDE analysis, one needs to make the technical assumption that $T$ is entry-wise non-negative, $t_{ij}\ge 0$.

\begin{proof}[Proof of Lemma~\ref{lemma:boundedM}]  
We will rely on basic properties of the solution to the MDE from~\cite{AjankiCorrelated, AEKband} whose locations
in these  papers we will cite
precisely. 
 Most importantly, as a main technical  input for the current and the subsequent
proof we use Lemma~D.1 from \cite{AEKband}. This lemma identifies the behavior of the solution $\bs{M}_\zeta= \bs{M}_\zeta(z)$
to the MDE \eqref{2x2 MDE} with the usual side condition $\Im \bs{M}_\zeta>0$ near the  imaginary axis
when the real part of the spectral parameter $z$ (in our applications  $\Re  z=0$) 
is away from the support of the self-consistent density of states.
Roughly  speaking this lemma states that $\bs{M}_\zeta(i\eta)$, $\eta>0$, extends continuously and in an explicitly controlled way
to $\eta=0$ when $0$ is away from the support of the self-consistent density of states.
Note that the  MDE in \cite{AEKband} is formulated in a very  general von Neumann algebraic setup, in our
application we work with the algebra of $(2N)\times (2N)$ matrices. In particular, $m$ in  \cite{AEKband} corresponds
to $\bs{M}_\zeta$ in this paper with some fixed $\zeta$.

Fix $\zeta\in \wh{\cal{R}}_\tau$. 
According to Proposition 2.1 in \cite{AjankiCorrelated}  there exists a compactly supported measure, $\bs{V}_\zeta$, on $\R$ taking values in the set of positive semidefinite $2N \times 2N$ matrices such that 
\begin{equation}\label{St}
 \bs{M}_\zeta(\ii \eta) = \int_{\R} \frac{\bs{V}_\zeta(d t)}{t - \ii \eta}. 
 \end{equation}
The self-consistent density of states, see~\eqref{scDOS}, is given by $\rho_\zeta(dE):=\frac{1}{\pi}\tr_{2N} V_\zeta(dE)$
 and its analytic extension to the upper half plane is $\rho_\zeta(z) := \frac{1}{\pi}\tr_{2N} \im \bs{M}_\zeta(z)$. 
 
 Next, we prove the bound   $\norm{\bs{M}_\zeta(0)}\lesssim \frac{1}{\tau} $. From \cite[Proposition 2.1]{AEKband}, we have that there exists $C\sim 1$ such that the $\supp \bs{V}_\zeta \subset \{\pm \abs{\zeta}\}+[-C,C]$. Then for $|\zeta| > 2C+1$, we apply the trivial bound $\norm{\bs{M}_\zeta(0)} \leq \dist(0, \supp \bs{V}_\zeta ) < 1 \lesssim
 \frac{1}{\tau}$. We will now assume $|\zeta| < 2C+1$.
From the representation~\eqref{St}, 
we bound the norm of $\bs{M}_\zeta(\ii \eta)$ by considering $ \langle x, \bs{M}_\zeta(\ii \eta)  y \rangle $
 for any vectors $x, y\in \C^{2N}$, which we estimate via the Schwarz inequality as 
 \begin{align*} 
  \Big|\Big\langle x, \int_{\R} \frac{\bs{V}_\zeta(d t)}{t - \ii \eta}   y \Big\rangle\Big|  \leq \frac{1}{2}  \int_{\R}  \frac{ \langle x, \bs{V}_\zeta(d t)    x \rangle +   \langle y, \bs{V}_\zeta(d t)    y \rangle     }{|t - \ii \eta|}   \,.
 \end{align*}
Combining this with the boundedness of $ \supp \bs{V}_\zeta$ and the assumed bound  $|\zeta|\le 2C+1$ yields
\begin{align*} \|\bs{M}_\zeta(\ii \eta) \| & 
\leq \max_{x: \|x\|_2 = 1} \int_{\R}  \frac{ \langle x, \bs{V}_\zeta(d t)    x \rangle }{| t - \ii \eta|}    
  \lesssim  \max_{x: \|x\|_2 = 1} \int_{\R}  \frac{ \langle x, \bs{V}_\zeta(d t)    x \rangle }{| t - \ii \eta|^2}    
  =\max_{x: \|x\|_2 = 1} \frac{1}{\eta}   |\langle x, \Im\bs{M}_\zeta(\ii \eta) x \rangle| \,
\end{align*} 
for $\eta \in (0,1)$.
Taking $\limsup_{\eta\downarrow 0}$ on both sides and using the assumption $\zeta \in \wh{\cal{R}}_\tau$,  along with the representation of $\bs{M}_\zeta$ in \eqref{form of 2x2 M}, yields
\[ \limsup_{\eta\downarrow 0} \|\bs{M}_\zeta(\ii \eta) \| \lesssim \frac{1}{\tau}. \]
The existence of the limit of $\bs{M}_\zeta(\ii\eta)$ as $\eta\downarrow0$ follows from the implication (i)$\Rightarrow$(iii) in Lemma~D.1 of \cite{AEKband}, 
as (i) is guaranteed by  the definition of $\wh{\cal{R}}_\tau$ from \eqref{pseudorestau}.   By definition of $\wh{\cal{R}}_\tau$ in \eqref{pseudorestau} we have $a_\zeta(\ii \eta)\to 0$ and $d_\zeta(\ii \eta) \to 0$ in the  limit $\eta \downarrow 0$ and therefore \eqref{Bequation} implies that ${b}_\zeta(0)$ satisfies \eqref{definition of b}. 
At the end of the proof of Proposition~\ref{prp:Solution of vector equation} in Section~\ref{Subsec:Self consistent pseudospectrum via MDE} we even showed that $\frak{b}(\zeta)$ is the $(2,1)$-component of $\lim_{\eta \downarrow 0} {\bs{M}}_\zeta(\ii \eta)$.

Since $s_{ij}+ \abs{t_{ij}} \lesssim \frac{1}{N}$ by Assumption (B) and \eqref{definition of S and T} we have the bound $\norm{\cal{S}[\bs{M}]} \lesssim \norm{\bs{M}}$.
 The estimate \eqref{largezeta} follows  from  this  and  writing \eqref{2x2 MDE} in the form 
\[
\bs{M}=-(\ii \eta+ \bs{A})^{-1}(\bs{I}+(\cal{S}[\bs{M}])(\ii \eta + \bs{A})^{-1})^{-1}\,, \quad
\mbox{with}  \quad \normb{(\ii \eta  + \bs{A})^{-1}}\lesssim (\eta +\abs{\zeta})^{-1},
\]
for $\bs{M}=\bs{M}_\zeta( \ii \eta)$, $\bf{A} = \bf{A}_\zeta$ and any sufficiently  large $\abs{\zeta}$. % 
The inclusion \eqref{Eps} is a consequence of \eqref{Aeq}, \eqref{Deq} and 
\eqref{largezeta}.  Indeed, using the large $\zeta$  bound from \eqref{largezeta} on  $\bs{M}=\bs{M}_\zeta(\ii \eta)$ in the two equations for $a$ and $d$  we have  $\norm{a}_\infty +\norm{d}_\infty \lesssim \abs{\zeta}^{-2}(\eta +\norm{a}_\infty +\norm{d}_\infty)$ for $\abs{\zeta} > C$. Thus, $\norm{a}_\infty +\norm{d}_\infty \lesssim \eta$ and \eqref{Eps} follows from 
the definition of $\wh{\cal{R}}_\tau$   in \eqref{pseudorestau}.  
\end{proof}

At this stage we are ready  to show the qualitative equivalence  statement, Theorem~\ref{thm:eq}. 

\begin{proof}[Proof of Theorem~\ref{thm:eq}]
The theorem is a consequence of Corollary~\ref{crl:proof of prop 2.4} and  Lemma~D.1 of \cite{AEKband}. Indeed, for $\zeta \in \cal{R}$ the condition in (iii) of Lemma~D.1 in \cite{AEKband} is satisfied by Corollary~\ref{crl:proof of prop 2.4} and, thus, also  (i) of the same lemma, i.e. $\zeta \in \wh{\cal{R}}$. The opposite inclusion $\wh{\cal{R}} \subset \cal{R}$ and the identity \eqref{ABDetaat0} were shown in Lemma~\ref{lemma:boundedM}. Thus, $\wh{\cal{R}} = \cal{R}$. 
The identity \eqref{ABDetaat0} was shown at the end of the proof of Proposition~\ref{prp:Solution of vector equation}.
Finally, \eqref{third} holds by the equivalence (iii)$\Leftrightarrow$(v) Lemma~D.1 from \cite{AEKband}. 
\end{proof}

Now we prepare the proof of the quantitative result, Proposition~\ref{prp:eff}.   Although
we already  know that $\wh{\cal{R}} = \cal{R}$, we will keep on writing $\wh{\cal{R}}$ when the context
requires the definition~\eqref{hatRM} rather than Definition~\ref{def:Rb}.  
The following lemma gives an effective version of the side condition, \eqref{sidecond}, on $\wh{\cal{R}}_{\tau}$.

\begin{lemma} \label{lem:specrad}
Fix $\tau>0$ and let $\zeta \in \wh{\cal{R}}_{C \tau}$, where $C\ge 4\max_i \sum_{j}s_{ij}$ is a positive constant and 
let  $\frak{b}({\zeta})$ be the solution to  \eqref{definition of b}  and \eqref{sidecond}. 
Then
\begin{equation} \label{eq:specradest}\frak{r} \big(\frak{D}_{\abs{\frak{b}(\zeta)}^2}S \big)\leq (1+\tau)^{-1}.\end{equation}
 \end{lemma}

\begin{proof}
We prove the statement with $b=b_{\zeta}(\ii \eta)$ instead of its limit $ \mathfrak {b}(\zeta)  $ (cf. \eqref{ABDetaat0}) with $\eta>0$ and then take  $\eta \to 0$. For brevity, we use $a= a_\zeta(\ii\eta)$, etc. 
Dividing \eqref{Aeq} by $\eta$ and using $d > 0$, (which follows from $\Im \bs{M} $ being positive definite) we have 
\begin{align*}  a/\eta &>  ( Sa/\eta + 1 ) \abs{b}^2
\end{align*} 
entry-wise, which we rearrange to
\begin{equation} \label{eq:specineq}  
a/\eta >  (1+\tau) \abs{b}^2 Sa/\eta    + 
 (  1 -  \tau S a/\eta  )\abs{b}^2 .  \end{equation}

Since $\zeta \in \wh{\cal{R}}_{C \tau}$, for sufficiently small $\eta$ we have 
\[  1 - \tau S a/\eta  \geq 1 - 2C^{-1} S 1\ge \frac{1}{2}, \]
i.e. the second term on the right side of \eqref{eq:specineq} is positive and $a/\eta > (1+\tau)\frak{D}_{\abs{b}^2}Sa/\eta$. Then by taking the inner product with the left Perron-Frobenius eigenvector of $\frak{D}_{\abs{b}^2}S$; as in, for instance \cite[Theorem 1.6]{book-seneta}, we conclude that 
$\frak{r}(\frak{D}_{\abs{b}^2}S) < (1+\tau)^{-1}$. Thus in the limit $\eta \downarrow 0$, \eqref{eq:specradest} also holds.

\end{proof}

\begin{lemma} 
\label{lmm:ell2 bound on b}  For $\zeta \in \wh{\cal{R}}  $ away 
 from the origin $\frak{b}(\zeta)$ is uniformly bounded in $\ell^2$. More precisely, 
 $\norm{\frak{b}(\zeta)}_2 \le \frac{1+\abs{\varrho}}{\abs{\zeta}}$ holds for all $\zeta \in \wh{\cal{R}}$
\end{lemma}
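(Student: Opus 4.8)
The plan is to prove the two assertions separately. For $0\notin\wh{\cal E}$ I would argue by contradiction using the vector equation \eqref{definition of b}; for the quantitative bound I would start again from \eqref{definition of b}, reduce everything to two scalar averages of the quantity $\gamma_i:=\beta_i(\Sigma\beta)_i$, where $\beta:=\abs{\frak b_\zeta}$ is the entrywise modulus and $\Sigma:=S^{(1/2)}\odot(S^*)^{(1/2)}$, and then control those averages by testing the \emph{symmetric}, entrywise non-negative matrix $A:=\frak D_\beta\Sigma\frak D_\beta$ against the constant vector $\mathbf 1=(1,\dots,1)$, invoking the spectral-radius bound of Lemma~\ref{lem:specrad}.

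For Step~1, suppose $0\in\wh{\cal E}$. Then by Lemma~\ref{lemma:boundedM} the vector $\frak b_0$ is defined and solves \eqref{definition of b} at $\zeta=0$, i.e.\ $1+(T\frak b_0)_i(\frak b_0)_i=0$ for every $i$. In particular no component of $\frak b_0$ vanishes, so $\frak b_0\ne0$, and multiplying the $i$-th equation by $(\frak b_0)_i$ and collecting the equations gives $\frak D_{\frak b_0^2}T\,\frak b_0=-\frak b_0$. Hence $-1$ is an eigenvalue of $\frak D_{\frak b_0^2}T$, so $\frak r(\frak D_{\frak b_0^2}T)\ge1$, which contradicts \eqref{eq:gennonherm} (giving $\frak r(\frak D_{\frak b_0^2}T)\le\abs{\varrho}<1$ and valid throughout $\wh{\cal E}$, in particular at $\zeta=0$). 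Therefore $0\notin\wh{\cal E}$.

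For Step~2, fix $\zeta\in\wh{\cal E}$; then $\zeta\ne0$ by Step~1, and by \eqref{definition of b} no component of $\frak b_\zeta$ vanishes, so $\beta:=\abs{\frak b_\zeta}\in\R^N$ is strictly positive. With $\Sigma_{ij}=\sqrt{s_{ij}s_{ji}}=\Sigma_{ji}$ and $\gamma_i:=\beta_i(\Sigma\beta)_i\ge0$, equation \eqref{definition of b} gives $\zeta\,\frak b_{\zeta,i}=-1-\frak b_{\zeta,i}(T\frak b_\zeta)_i$, while Assumption~(2.D) gives $\abs{(T\frak b_\zeta)_i}\le\sum_j\abs{t_{ij}}\beta_j\le\abs{\varrho}(\Sigma\beta)_i$, so that $\abs{\zeta}^2\beta_i^2=\absb{1+\frak b_{\zeta,i}(T\frak b_\zeta)_i}^2\le(1+\abs{\varrho}\gamma_i)^2$. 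Averaging over $i$ (recall $\norm{\frak b_\zeta}_2^2=\tfrac1N\sum_i\beta_i^2$) reduces the claim to $\tfrac1N\sum_i\gamma_i\le1$ and $\tfrac1N\sum_i\gamma_i^2\le1$, since these yield $\abs{\zeta}^2\norm{\frak b_\zeta}_2^2\le1+2\abs{\varrho}+\abs{\varrho}^2=(1+\abs{\varrho})^2$. Now $A:=\frak D_\beta\Sigma\frak D_\beta$ is symmetric and entrywise non-negative, hence $\norm{A}=\frak r(A)$, and it is similar to $\frak D_{\abs{\frak b_\zeta}^2}\Sigma$ (conjugate by the invertible $\frak D_\beta$); so by \eqref{bound SSstar by S} and Lemma~\ref{lem:specrad}, $\norm{A}=\frak r(\frak D_{\abs{\frak b_\zeta}^2}\Sigma)\le\frak r(\frak D_{\abs{\frak b_\zeta}^2}S)<1$. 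Since $(A\mathbf 1)_i=\beta_i(\Sigma\beta)_i=\gamma_i$, one gets $\tfrac1N\sum_i\gamma_i=\langle\mathbf 1,A\mathbf 1\rangle\le\norm{\mathbf 1}_2^2\norm{A}=\norm{A}<1$ and $\tfrac1N\sum_i\gamma_i^2=\norm{A\mathbf 1}_2^2\le\norm{A}^2\norm{\mathbf 1}_2^2=\norm{A}^2<1$, which completes the argument.

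The crux I expect is obtaining the \emph{sharp} constant $(1+\abs{\varrho})/\abs{\zeta}$ in Step~2: a naive triangle-inequality estimate on \eqref{definition of b} only gives $\norm{\frak b_\zeta}_2\le(\abs{\zeta}-\abs{\varrho})^{-1}$, which is vacuous when $\abs{\zeta}\le\abs{\varrho}$ (and $\wh{\cal E}$ does contain such $\zeta$ once $\abs{\varrho}$ is close to $1$) and, even when $\abs{\zeta}>\abs{\varrho}$, is weaker than the claimed bound whenever $\abs{\zeta}<1+\abs{\varrho}$. The improvement comes precisely from retaining the factor $\beta_i$ inside $\gamma_i=\beta_i(\Sigma\beta)_i$ and recognizing $\gamma=A\mathbf 1$ for the symmetric matrix $A=\frak D_\beta\Sigma\frak D_\beta$, whose operator norm is exactly what Lemma~\ref{lem:specrad} controls. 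A secondary point to check carefully is that Lemma~\ref{lem:specrad} and \eqref{eq:gennonherm} are genuinely applicable at $\zeta=0$ in Step~1, and that \eqref{definition of b} forces every component of $\frak b_\zeta$ to be nonzero (so that $\frak D_\beta$ is invertible and $\frak D_{\frak b_0^2}T\frak b_0=-\frak b_0$ is a genuine eigenvalue relation).
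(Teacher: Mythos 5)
Your proof is correct and essentially reproduces the paper's argument: Step~1 is the same contradiction, deducing $\frak{r}(\frak{D}_{\frak{b}_0^2}T)\ge 1$ from \eqref{definition of b} at $\zeta=0$ and contrasting with \eqref{eq:gennonherm}, and Step~2 rests on the same three ingredients (Assumption~(2.D), the Hadamard spectral-radius bound \eqref{bound SSstar by S}, and Lemma~\ref{lem:specrad}). The only difference is cosmetic and lies in the final assembly: the paper isolates the single operator-norm bound $\norm{\frak{D}_{\frak{b}}T\frak{D}_{\frak{b}}}\le\abs{\varrho}$ (equation~\eqref{bound on op norm of bTb}) and applies the $\ell^2$ triangle inequality once to $-\frak{b}=\zeta^{-1}(1+\frak{D}_{\frak{b}}T\frak{b})$, whereas you square entrywise, expand, and bound the averages $\avg{\gamma}$ and $\avg{\gamma^2}$ separately via $\norm{A}<1$ --- a slightly more roundabout route to the identical constant.
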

\begin{proof}
%Now we show that
We now prove the following bound on the operator norm of the $\frak{D}_{{\frak{b}}}{T}\frak{D}_{{\frak{b}}}$
\bels{bound on op norm of bTb}{
\norm{\frak{D}_{{\frak{b}}}{T}\frak{D}_{{\frak{b}}}}\le \abs{\varrho}\,,
}
for $\frak{b} =\frak{b}(\zeta)$ and  any $\zeta \in \wh{\cal{R}}$. Indeed, we have the chain of inequalities
\[ \| \mathfrak{D}_\mathfrak{b} T \mathfrak{D}_\mathfrak{b} \| \leq \| \mathfrak{D}_{|\mathfrak{b}|} |T| \mathfrak{D}_{|\mathfrak{b}|}\| 
\le \abs{\varrho}\| \mathfrak{D}_{|\mathfrak{b}|} (S^{(1/2)} \odot (S^*)^{(1/2)} )  \mathfrak{D}_{|\mathfrak{b}|}\| \le\abs{\varrho} \frak{r}(  \mathfrak{D}_{|\mathfrak{b}|^2} S), \]
where we used \eqref{bound T by S} and \eqref{bound SSstar by S}, we emphasize that in each inequality we are comparing the operator norm of the corresponding operator.
 The claim \eqref{bound on op norm of bTb} then follows  from Lemma~\ref{lem:specrad}, recalling that $\wh{\cal{R}} = \bigcup_{\tau>0}\wh{\cal{R}}_\tau$ (cf. Lemma~\ref{lemma:boundedM}). 

To bound $\|\mathfrak{b}(\zeta)\|_2$, we  rearrange \eqref{definition of b} to 
\[ -\mathfrak{b}= \frac{1}{\zeta}(1+  \mathfrak{D}_\mathfrak{b}T\mathfrak{b}). \]
Then we take the $\norm{\2\cdot\2}_2$-norm of the vectors on both sides and use \eqref{bound on op norm of bTb}. 
\end{proof}

The following lemma provides entry-wise upper and lower bounds on $\frak{b}(\zeta)$ when $\zeta$ is bounded away from $0$. 
 The bounds deteriorate as $\zeta$ approaches zero,  i.e. if $\zeta \in \D_\xi:= \{ z: |z|< \xi\}$ for small $\xi$. 
 Since these bounds play a crucial role in the upcoming analysis we introduce a variant of our comparison 
 relations $a \lesssim b$ and $a\sim b$ that track  the dependence of the implicit constant on $\xi$ when 
 $\zeta \in \C\setminus\D_\xi =\{\zeta: \abs{\zeta} \ge \xi\}$, namely we write $a \lesssim^\xi b$ for two quantities $a$ and $b$ that depend on $\zeta \in \D_\xi^c$ whenever $a \lesssim \xi^{-C_*} b$ holds for some positive  constant $C_*$, depending only on $\abs{\varrho}$ and $L$ in \eqref{eq:prim}. In particular, the following lemma shows that $\abs{\frak{b}(\zeta)}\sim^\xi 1/(1+\abs{\zeta})$ for $\zeta \in \wh{\cal{R}}\setminus \D_\xi$.

\begin{lemma}\label{lmm:bbound}
For $\zeta\in \wh{\cal{R}}$ the entries of $\mathfrak {b}(\zeta)$ and their derivatives satisfy 
\begin{equation}\label{bbound}
  \frac{ \abs{\zeta}}{1+\abs{\zeta}^2}\lesssim \abs{\mathfrak{b}(\zeta) }\lesssim \frac{1}{\abs{\zeta}}\,, \qquad \abs{\partial_\zeta \frak{b}(\zeta)} \lesssim \frac{1}{\abs{\zeta}^2}\,.
\end{equation} 
Furthermore, for any $\zeta \in \wh{\cal{R}}\cup \partial \wh{\cal{R}}$ with $\abs{\zeta}\ge \xi$  the function $\zeta \mapsto \frak{b}(\zeta)$ admits a holomorphic extension to a  $\delta$-neighborhood  $\D_\delta(\zeta)$ of $\zeta$ in $\C$ whose size  $\delta$  depends only on model parameters and on $\xi$, i.e. $\delta \gtrsim^\xi 1$. 
 The bounds \eqref{bbound} remain valid for this extension. 
\end{lemma}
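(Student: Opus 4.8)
The plan is to prove the three pointwise bounds in \eqref{bbound} one at a time, and then obtain the holomorphic extension by a quantitative implicit function theorem applied to \eqref{definition of b}. For $\abs{\zeta}$ larger than a model-dependent constant everything is immediate: letting $\eta\downarrow0$ in \eqref{largezeta} gives $\abs{\frak{b}_\zeta}\le\norm{\bs{M}_\zeta(0)}\lesssim\abs{\zeta}^{-1}$, and since $\zeta\mapsto\frak{b}_\zeta$ is holomorphic (Lemma~\ref{lmm:b is holomorphic}) a Cauchy estimate on a disk of radius $\abs{\zeta}/2$ yields $\abs{\partial_\zeta\frak{b}_\zeta}\lesssim\abs{\zeta}^{-2}$. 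So the real work is for $\zeta$ in the bounded part of $\wh{\cal{E}}$, where I would argue directly from the extraspectral Dyson equation.

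Rewriting \eqref{definition of b} as $\frak{b}_{\zeta,i}=-\bigl(\zeta+(T\frak{b}_\zeta)_i\bigr)^{-1}$, Assumption~(2.D), the bound $s_{ij}\le\varphi_2 N^{-1}$ from Assumption~(B), and Cauchy--Schwarz give $\abs{(T\frak{b}_\zeta)_i}\le\abs{\varrho}\sum_j\sqrt{s_{ij}s_{ji}}\,\abs{\frak{b}_{\zeta,j}}\le\abs{\varrho}\varphi_2\norm{\frak{b}_\zeta}_2$, which by Lemma~\ref{lmm:ell2 bound on b} is $\lesssim\abs{\zeta}^{-1}$. The lower bound then follows at once, $\abs{\frak{b}_{\zeta,i}}\ge\bigl(\abs{\zeta}+C\abs{\zeta}^{-1}\bigr)^{-1}\gtrsim\abs{\zeta}/(1+\abs{\zeta}^2)$, and the same estimate already gives $\abs{\frak{b}_\zeta}\le 2\abs{\zeta}^{-1}$ as soon as $\abs{\zeta}$ exceeds a model-dependent constant.

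The remaining case, the upper bound for $\zeta\in\wh{\cal{E}}$ with $\abs{\zeta}$ small, is the main obstacle: the estimate just used controls $(T\frak{b}_\zeta)_i$ only by $\abs{\zeta}^{-1}$, which does not beat $\abs{\zeta}$ in this regime, so one must exploit $\zeta\in\wh{\cal{E}}$ more seriously. Here I would use the uniform primitivity of $S$ (Assumption~(2.E)) together with $\frak{r}(\frak{D}_{\abs{\frak{b}_\zeta}^2}S)<1$ from Lemma~\ref{lem:specrad}. Conjugating, $\frak{D}_{\abs{\frak{b}_\zeta}}S\frak{D}_{\abs{\frak{b}_\zeta}}$ is nonnegative, symmetric, of spectral radius $<1$; comparing, for each $k$, the Rayleigh quotient of its $L$-th power at the normalized $k$-th coordinate vector with the top eigenvalue, and bounding the $k$-th diagonal entry from below via $[S^L]_{kk}\ge c_0/N$, gives $\abs{\frak{b}_{\zeta,k}}^2\bigl(\min_j\abs{\frak{b}_{\zeta,j}}\bigr)^{2(L-1)}c_0\lesssim 1$; inserting the already proved lower bound $\min_j\abs{\frak{b}_{\zeta,j}}\gtrsim\abs{\zeta}/(1+\abs{\zeta}^2)$ yields $\norm{\frak{b}_\zeta}_\infty\lesssim 1$ on bounded subsets of $\wh{\cal{E}}$ (and, by a refinement of the same argument using that most coordinates of $\frak{b}_\zeta$ are $\sim\zeta^{-1}$, that $\wh{\cal{E}}$ is bounded away from the origin, so in fact $\norm{\frak{b}_\zeta}_\infty\lesssim\abs{\zeta}^{-1}$ throughout $\wh{\cal{E}}$). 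For the derivative I differentiate \eqref{definition of b}: with $J_\zeta(b)=1+(\zeta+Tb)b$ as in the proof of Lemma~\ref{lmm:b is holomorphic} and $\nabla J_\zeta(\frak{b}_\zeta)=-\frak{D}_{\frak{b}_\zeta}^{-1}(1-\frak{D}_{\frak{b}_\zeta}^2T)$ from \eqref{nabla J}, one gets $\nabla J_\zeta(\frak{b}_\zeta)[\partial_\zeta\frak{b}_\zeta]=-\frak{b}_\zeta$, hence $\partial_\zeta\frak{b}_\zeta=\frak{D}_{\frak{b}_\zeta}\bigl(1-\frak{D}_{\frak{b}_\zeta}T\frak{D}_{\frak{b}_\zeta}\bigr)^{-1}\frak{b}_\zeta$; since $\norm{\frak{D}_{\frak{b}_\zeta}T\frak{D}_{\frak{b}_\zeta}}\le\abs{\varrho}<1$ by \eqref{bound on op norm of bTb} the inverse is bounded in operator norm, and feeding in the pointwise bounds on $\frak{b}_\zeta$ (and $\abs{\zeta}\gtrsim 1$ in the bounded regime) gives $\abs{\partial_\zeta\frak{b}_\zeta}\lesssim\abs{\zeta}^{-2}$.

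For the holomorphic extension, fix $\zeta_*\in\wh{\cal{E}}\cup\partial\wh{\cal{E}}$ with $\abs{\zeta_*}\ge\xi$. Since $\frak{b}_\zeta$ is uniformly bounded by $\lesssim^\xi 1$ near $\zeta_*$ by \eqref{bbound}, any subsequential limit $\frak{b}^*$ along $\wh{\cal{E}}\ni\zeta\to\zeta_*$ solves \eqref{definition of b} at $\zeta_*$ and satisfies $\abs{\frak{b}^*}\sim^\xi 1$ and $\frak{r}(\frak{D}_{\abs{\frak{b}^*}^2}S)\le 1$. Then $\frak{D}_{\frak{b}^*}$ is invertible and, by the chain \eqref{bound T by S}--\eqref{eq:gennonherm} used with the non-strict radius bound, $\frak{r}(\frak{D}_{(\frak{b}^*)^2}T)\le\abs{\varrho}<1$, so $\nabla J_{\zeta_*}(\frak{b}^*)$ is invertible with $\norm{\nabla J_{\zeta_*}(\frak{b}^*)^{-1}}\lesssim^\xi 1$ (using \eqref{bound on op norm of bTb} at the limit and $\norm{\frak{D}_{\frak{b}^*}}\,\norm{\frak{D}_{\frak{b}^*}^{-1}}\lesssim^\xi 1$). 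As $J_\zeta(b)$ is a polynomial in $(\zeta,b)$ with controlled coefficients, the quantitative implicit function theorem produces a holomorphic solution $\wt{\frak{b}}$ of \eqref{definition of b} on $\D_\delta(\zeta_*)$ with $\delta\gtrsim^\xi 1$, and local uniqueness forces $\wt{\frak{b}}=\frak{b}$ on $\wh{\cal{E}}\cap\D_\delta(\zeta_*)$. Finally, the bounds \eqref{bbound} propagate: by continuity $\frak{r}(\frak{D}_{\abs{\wt{\frak{b}}}^2}S)<\abs{\varrho}^{-1}$ after shrinking $\delta$, so \eqref{bound on op norm of bTb}, the $\ell^2$ bound, and the Perron--Frobenius estimate above all hold for $\wt{\frak{b}}$ with slightly worse, still $\lesssim^\xi 1$, constants; finitely many such shrinkings keep $\delta\gtrsim^\xi 1$.
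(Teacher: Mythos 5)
Your handling of the large-$\abs{\zeta}$ regime, the entry-wise lower bound, the derivative formula $\partial_\zeta\frak{b}_\zeta=\frak{D}_{\frak{b}_\zeta}(1-\frak{D}_{\frak{b}_\zeta}T\frak{D}_{\frak{b}_\zeta})^{-1}\frak{b}_\zeta$, and the holomorphic extension via the implicit function theorem all agree with the paper's proof and are sound. However, the upper bound $\abs{\frak{b}_\zeta}\lesssim 1/\abs{\zeta}$ on the bounded part of $\wh{\cal{E}}$ --- the only genuinely hard step of the lemma --- is not established by your argument.

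The primitivity-based argument you give for that step has two flaws. First, $S$ is not symmetric in general ($s_{ij}=\E\abs{x_{ij}}^2$ need not equal $s_{ji}=\E\abs{x_{ji}}^2$), so $\frak{D}_{\abs{\frak{b}_\zeta}}S\frak{D}_{\abs{\frak{b}_\zeta}}$ is not self-adjoint and ``Rayleigh quotient bounded by top eigenvalue'' simply does not apply; for a non-negative matrix one does get $(B^L)_{kk}\le\frak{r}(B)^L$ by pairing against the right Perron vector, but that is a different argument from the one you wrote. Second, and decisively, the primitivity lower bound $[S^L]_{kk}\ge c_0/N$ carries a factor $1/N$, so the chain actually gives
$\abs{\frak{b}_{\zeta,k}}^2\bigl(\min_j\abs{\frak{b}_{\zeta,j}}\bigr)^{2(L-1)}c_0/N\le\bigl(\frak{D}_{\abs{\frak{b}_\zeta}^2}S\bigr)^L_{kk}\le1$, i.e. only $\norm{\frak{b}_\zeta}_\infty\lesssim\sqrt{N}$, not $\lesssim 1$. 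The $\ell^2$ bound from Lemma~\ref{lmm:ell2 bound on b} together with the spectral radius condition cannot rule out a single anomalously large coordinate of order $\sqrt N$, and the parenthetical ``refinement'' you invoke does not touch this $N$-factor. A warning sign is that your proposal never uses the piecewise H\"older continuity of $T$ from Assumption~(2.F): this is precisely the ingredient the paper's proof relies on, following the quadratic-vector-equation analysis of Proposition~6.6 in \cite{AjankiQVE}, to promote the $\ell^2$ control to the entry-wise bound. That step needs to be replaced by the regularity argument or something that genuinely exploits~(2.F).
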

\begin{proof}
We will show the upper bound on $\frak{b}(\zeta)$ first. The lower bound then follows from  
$1/\frak{b}(\zeta)= - (\zeta + T \frak{b}(\zeta))$ since $\frak{b}(\zeta)$ solves  
\eqref{definition of b}.  The behavior at large $\zeta$ is clear from  \eqref{definition of b}
and $| \frak{b}(\zeta)|\lesssim 1/|\zeta|$ following from \eqref{largezeta}
in  Lemma~\ref{lemma:boundedM}. Thus, it suffices to show $\abs{\mathfrak{b}(\zeta)} \lesssim \frac{1}{\abs{\zeta}}$ on the bounded set $\abs{\zeta}\le C$.  To do this, 
using the regularity of $T$ (assumption (2.F)) we extend the $\ell^2$ bound
 obtained in Lemma~\ref{lmm:ell2 bound on b}  to a uniform bound on the entries of $\mathfrak{b}(\zeta)$ exactly as in \cite{AjankiQVE}, Section 6.1. One simply follows the proof of  Proposition 6.6
in  \cite{AjankiQVE} line by line and sees $T$ satisfies all of the necessary properties.  
 This proves the first formula in \eqref{bbound}. 

To prove the bound~\eqref{bbound} on $\abs{\partial_\zeta \frak{b}(\zeta)}$,  
we  divide \eqref{definition of b} by $\frak{b}(\zeta)$ and  differentiate   with respect to $\zeta$ to  find
\bels{partial b}{
\partial_\zeta \frak{b} = \frak{D}_{\frak{b}}(1-\frak{D}_{\frak{b}}T\frak{D}_{\frak{b}})^{-1}{\frak{b}}
}
with $\frak{b}=\frak{b}(\zeta)$. From the upper bound on $\frak{b}$ and \eqref{bound on op norm of bTb}
 with $|\varrho|<1$  we conclude the bound on the derivative.

The invertibility of $1-\frak{D}_{\frak{b}}T\frak{D}_{\frak{b}}$ in \eqref{partial b}  also  % 
shows the existence of a holomorphic extension as claimed in Lemma~\ref{lmm:bbound},  
by the implicit function theorem. Indeed, we only need to check the stability of the defining  equation $J_\zeta(b)=0$ with $J_\zeta$ as in \eqref{Jzeta},
 which is equivalent to \eqref{definition of b}. By \eqref{nabla J} we have 
\[
\nabla J_\zeta(\frak{b})=-(1-\frak{D}_{\frak{b}}T\frak{D}_{\frak{b}})\frak{D}_{\frak{b}}^{-1},
\] 
which is invertible, proving the stability. This completes the proof of \eqref{bbound}. 
\end{proof}

Next we show that the MDE \eqref{2x2 MDE} is stable on the self-consistent pseudo-resolvent set, i.e. for $\zeta \in  \cal{R}=\wh{\cal{R}}$ (cf. Theorem~\ref{thm:eq}), and that this stability can be quantified in terms of $\Delta_{\zeta}$ and the distance of $\zeta$ to the origin.

\begin{lemma}\label{lmm:Stability}Let $\zeta\in {\cal{R}}$ with $\min\{\abs{\zeta},\Delta_{\zeta}\}\ge \xi$ for some $\xi \in (0,1)$ and  $\cal{L}_\zeta $ be the stability operator, acting on block diagonal matrices,  as defined  in \eqref{Def of L zeta}. Then  we
have 
\bels{Lbound}{
\norm{\cal{L}_{\zeta}^{-1}}_{sp}\lesssim^\xi 1.
}
 Here, $\norm{\cal{A}}_{sp}$ denotes the operator norm of $\cal{A}: \C^{2 \times 2} \otimes \C^N \to \C^{2 \times 2} \otimes \C^N$ induced by the Hilbert-Schmidt norm   $\| \bs{A} \|_{HS}:= ( \tr_{2N} \bs{A}\bs{A }^*)^{1/2}$ on block diagonal matrices $\bs{A} \in \C^{2 \times 2} \otimes \C^N $.
\end{lemma}

\begin{proof}
The action of $\cal{L}_\zeta$ on block diagonal matrices was described in \eqref{definition of lij matrices}. Each $L_{ij}$ from \eqref{definition of lij matrices} is of the form $L_{ij} =1-\frak{D}_{b_lb_r}Z$ with  some choice
$Z \in \{{T},{T}^*,{S},{S}^*\}$ and $b_l ,b_r \in \{\frak{b},\ol{\frak{b}}\}$ depending on $(i,j)$.    For any matrix $R \in \C^{N \times N}$ with $\frak{r}(\abs{R})<1$ by expanding the Neumann series we have the bound $\norm{(1-R)^{-1}} \le \norm{(1-\abs{R})^{-1}}$, where $\abs{R} = (\abs{r_{ij}})_{i,j}$ is the entry-wise absolute value of $R$.  Therefore, when $Z \in \{T,T^*\}$ we have, for any vector $u$, the bound 
\bels{estimate for Lij}{
\norm{{L}_{ij}^{-1} u}_{2} \le \norm{(1-\frak{D}_{\abs{b_lb_r}}\wt{S})^{-1}}\norm{u}_{2}\,,
}
 using that $|t_{ij}|\le \wt{s}_{ij}$ with $\wt{s}_{ij} := \sqrt{s_{ij}s_{ji}}$ the entries of $\wt{S}$. In this case we estimate \begin{align}\label{estimate for b2 tilde S}
\norm{(1-\frak{D}_{\abs{b_lb_r}}\wt{S})^{-1}} &\lesssim^\xi\normb{\pb{1-\frak{D}_{\abs{b_lb_r}}^{1/2}\wt{S}\frak{D}_{\abs{b_lb_r}}^{1/2}}^{-1}} \\
&= \frac{1}{1-\frak{r}(\frak{D}_{\abs{b_lb_r}}\wt{S})} \le \frac{1}{1-(\frak{r}(\frak{D}_{\abs{b_l}^2}S)\frak{r}(\frak{D}_{\abs{b_r}^2}S))^{1/2}}\,, \notag
\end{align}
where we have multiplied $(1-\frak{D}_{\abs{b_lb_r}}\wt{S})^{-1}$ in the first expression by $\frak{D}_{\abs{b_lb_r}}^{-1/2} \frak{D}_{\abs{b_lb_r}}^{1/2}$ on both sides and then used $\abs{\frak{b}({\zeta})} \sim^\xi 1$ from Lemma~\ref{lmm:bbound}. The last inequality uses \eqref{eq:hadspecrad}. For $Z \in \{S,S^*\}$ we get
\[
\norm{{L}_{ij}^{-1} u }_{2} \le \norm{(1-\frak{D}_{\abs{\frak{b}}^2}Z)^{-1}}\norm{u}_{2}\,.
\] 
To estimate this further we write
\[
\norm{(1-\frak{D}_{\abs{\frak{b}}^2}Z)^{-1}} = \frak{r}\norm{\frak{r}^{-1}-R}\,,
\]
where
 $\frak{r}=\frak{r}(\frak{D}_{\abs{\frak{b}}^2}Z)$ and $R=\frak{r}(\frak{D}_{\abs{\frak{b}}^2}Z)^{-1}\frak{D}_{\abs{\frak{b}}^2}Z$. Then we apply the following lemma, whose proof is postponed until after the proof of Lemma~\ref{lmm:Stability}. 
 Condition \eqref{quanprim} below,
  is satisfied   for some $\eps \sim^\xi 1$
   by Assumption \eqref{eq:prim} (with the same $L$ power) and by \eqref{bbound}.
\begin{lemma} \label{lemm:quanprim}
Let $R \in \R^{N \times N}$  have non-negative entries, be normalized through $\frak{r}(R)=1$ and  satisfy 
\begin{equation}\label{quanprim}
\frac{\eps}{N} \le  (R^L)_{ij} \le \frac{1}{\eps N}\,, \qquad ((R^*R)^L)_{ij} \ge \frac{\eps}{N}\,,
\end{equation}
for some $\eps  \in (0,1)$. 
Let $v_l$ and $v_r$ denote its left and right Perron-Frobenius eigenvectors corresponding to the isolated non-degenerate eigenvalue $\frak{r}(R)=1$, respectively.  Then 
\begin{align}\label{eq:R resolvent bound}
&\eps\avg{v_l}\le v_l \le \eps^{-1}\avg{v_l}\,, \quad\eps\avg{v_r}\le v_r \le \eps^{-1}\avg{v_r}\,, \\
&  \norm{(R-z)^{-1}}\le \frac{1}{\eps^2}\pbb{\frac{1}{\abs{1-z}}+ \frac{1}{\abs{z}-1+(2L+2)^{-1}\eps^7}}\,,\notag
\end{align}
hold for any $z \in \C$ with $\abs{z} >1-(2L+2)^{-1}\eps^7$.  In particular, $\spec(R) \subset \{1\} \cup  \D_{1-(2L+2)^{-1}\eps^7}$ . 
\end{lemma}
 We conclude that $\norm{\frak{r}^{-1}-R} \lesssim^\xi  (\frak{r}^{-1}-1)^{-1}$. Thus, the bound on $\norm{{L}_{ij}^{-1}u}_{2}$, for all possible choices of $Z$, $b_l$ and $b_r$ and $u$, implies 
 \begin{equation}\label{estimate for Lzeta}
 \norm{\cal{L}_{\zeta}^{-1}}_{sp}\lesssim^\xi \Delta_\zeta^{-1}\le \xi^{-1}\lesssim^\xi 1.
 \end{equation}
 This completes the proof of Lemma~\ref{lmm:Stability}.
\end{proof}

\begin{proof}[Proof of Lemma~\ref{lemm:quanprim}] Since left and right eigenvectors of $R$ and $R^L$ coincide and $R^L$ has strictly positive entries, the eigenvectors $v_l$ and $v_r$ with $Rv_r=v_r$ and $R^*v_l =v_l$, respectively, are unique (up to scaling) and the eigenvalue $\frak{r}(R)=1$ is isolated due to the Perron-Frobenius theorem.  The upper and lower bounds on the eigenvectors in \eqref{eq:R resolvent bound} are an immediate consequence of the upper and lower bounds on $R^L$ in \eqref{quanprim}. The proof of the bound on the resolvent of $R$ in \eqref{quanprim}  follows exactly the proof of Lemma~A.1 from \cite{EKR} by simply tracking the dependence on $\eps$ explicitly.
The bounds on the location of the spectrum then follow from the boundedness of the resolvent.
 \end{proof}
 
 Now we quantify for  $\zeta \in \cal{R}$ the gap in the support of the self-consistent density of states $\rho_\zeta$ associated with the MDE \eqref{2x2 MDE}. In other words, we prove the second inclusion in \eqref{RR}. 
 \begin{lemma} \label{lmm:Gap in rho zeta} For $\zeta\in \cal{R}$ with $\min\{\abs{\zeta},\Delta_{\zeta}\}\ge \xi$ we have  $\dist(\supp \rho_{\zeta},0)  \gtrsim^\xi1$. 
 \end{lemma}

 \begin{proof}
As in the proof of Proposition~\ref{prp:Solution of vector equation} we construct a local solution $z \mapsto \bs{M}_\zeta(z)$ of the MDE \eqref{2x2 MDE} in a neighborhood of $z=0$ in $\C$. However, this time we need an effective control on the size of the neighborhood and, thus, require the bound $\norm{\cal{L}_{\zeta}^{-1}}_{sp}\lesssim^\xi 1$ from Lemma~\ref{lmm:Stability}.

By the implicit  function theorem applied on the space of block diagonal matrices with the norm $N^{-1/2}\norm{\1\cdot\1}_{HS}$, the equation $J_{z,\zeta}(\bs{M})=0$ with $J_{z,\zeta}(\bs{M}):= \bs{M}-\Phi^{2 \times 2}_{z,\zeta}(\bs{M})$ (cf. \eqref{2x2 MDE})  has a unique local solution $z \mapsto \bs{M}_\zeta(z)$ for $z\in \D_\delta(0)$ with some $\delta>0$. In fact, here  $\delta \gtrsim^\xi1$ because of the quantitative bound $\norm{\cal{L}_{\zeta}^{-1}}_{sp}\lesssim^\xi 1$. Since $J_{z,\zeta}$ maps self-adjoint matrices to  self-adjoint matrices for real parameters $z$, we conclude that $\bs{M}_\zeta(z)=\bs{M}_\zeta(z)^*$ if $z \in \R$. Furthermore, as we already saw in the proof of Proposition~\ref{prp:Solution of vector equation}, the relation $\cal{L}_{\zeta}\big[\partial_\eta \bs{M}_{\zeta}(\ii \eta)|_{\eta=0}\big]=  \ii\bs{M}_{\zeta}(0)^2$ implies that $\im \bs{M}_\zeta(\ii \eta)$ is positive definite for any sufficiently small $\eta >0$. In particular, the local solution coincides with the standard MDE solution for $z$ in the complex upper half plane. Since $\rho_\zeta(z) = \frac{1}{\pi}\tr_{2N} \im \bs{M}_\zeta(z)=0$ 
 for  $z \in \D_\delta(0)\cap \R$, we conclude $\dist(\supp \rho_{\zeta},0) \ge \delta \gtrsim^\xi1$. 
 \end{proof}

 We have  collected the ingredients to show Proposition~\ref{prp:eff}, the quantitative version of  Theorem~\ref{thm:eq}.
 
 \begin{proof}[Proof of Proposition~\ref{prp:eff}] 
The first relation in~\eqref{RR}  follows from Lemma~\ref{lem:specrad} and the second 
relation follows from
Lemma~\ref{lmm:Gap in rho zeta}.    Finally,
the third relation comes 
 from the effective version\footnote{The constants in the different 
 equivalent statements (i)--(vi) of  Lemma~D.1 of \cite{AEKband}
 were stated to depend on each other effectively; in fact this 
 dependence is   polynomial  following directly from
 that proof.}
  polynomial dependence of the constants 
 in the 
 implication (v)$\Rightarrow$(i) in Lemma~D.1 of \cite{AEKband}.
\end{proof}

 Now we have the necessary tools to prove  Theorem~\ref{prp:b as resolvent}. We will apply  Corollary~2.3 from \cite{EKS} for random matrices with correlated entries to the matrix $\bs{H}_\zeta$. This corollary asserts that $\bs{H}_\zeta$ does not have eigenvalues away from the support of its associated self-consistent density of states. 
We note that the matrix $\bs{H}_\zeta$ does not satisfy assumption (CD) of \cite{EKS}. 
The condition (CD) was designed to describe
ensembles $H$, where only those matrix elements are strongly correlated that 
are close to each other within the matrix $H$. The matrix $\bs{H}_\zeta$ generated from an elliptic-type ensemble $X$ has strongly correlated
matrix elements $x_{ij}$ and $x_{ji}$ that are positioned far from each other.
However,  by Example 2.11 of \cite{EKS}  on block matrices,  $\bs{H}_\zeta$ satisfies the  more general assumptions (C) and (D), under which Corollary 2.3 of \cite{EKS}  still holds.

\begin{proof}[Proof of Theorem~\ref{prp:b as resolvent}]
We begin by proving $(i)$.   
Let $\zeta\in \cal{R}$ with $\min\{\abs{\zeta},\Delta_{\zeta}\}\ge \xi$ for $\xi = N^{-\gamma}$ for some sufficiently small $\gamma>0$ to be chosen later.
In particular, here $\xi$ 
is $N$-dependent. 
 In Lemma~\ref{lmm:Gap in rho zeta} we have already seen that $\dist(\supp \rho_{\zeta},0)  \gtrsim^\xi1$, i.e. that zero lies outside the asymptotic spectrum of $\bs{H}_\zeta$.  Now we use this information  to apply Corollary~2.3 from \cite{EKS} to see that zero is not an eigenvalue of $\bs{H}_\zeta$ with very high probability. From the definition of $\bs{H}_\zeta$ in \eqref{2x2 symmetrization} this is equivalent to $\zeta$ not being an eigenvalue of $X$.

Indeed, by Corollary~2.3 from \cite{EKS}  we conclude that 
\bels{Gap in spectrum of Hzeta}{
\P \pb{\spec (\bs{H}_\zeta )\cap [-N^{-C \gamma},N^{-C \gamma}]\ne \emptyset}\le N^{-\nu}
}
for all $\nu \in \N$ and sufficiently large $N$, where $C=C_*>0$ is a  constant, depending only on $\abs{\varrho}$ and $L$ in \eqref{eq:prim}. By a standard stochastic continuity argument, i.e. by choosing a fine grid of points $\zeta \in {\cal{R}}$ with $\min\{\abs{\zeta},\Delta_\zeta\} \ge N^{-\gamma}$ for $\gamma=c_*$ a constant depending only on $\abs{\varrho}$ and $L$ in \eqref{eq:prim}, taking a union bound of the event in \eqref{Gap in spectrum of Hzeta} over this grid and then using the  Lipschitz-continuity of the spectrum of $\bs{H}_\zeta$ in $\zeta$ with $N$-independent Lipschitz constant,  we infer 
\[
\P \pb{0 \not \in \spec (\bs{H}_\zeta) \text{ for all } \zeta \in {\cal{R}} \text{ with } \min\{\abs{\zeta},\Delta_\zeta\} \ge N^{-c_*} }\ge 1- N^{-\nu}
\]
 for all $\nu \in \N$ and sufficiently large $N \in \N$. This implies the statement of $(i)$ in Theorem~\ref{prp:b as resolvent}
  since $0 \in \spec (\bs{H}_\zeta)$ is equivalent to $\zeta\in \spec (X)$.
  
Now we show $(ii)$ of Theorem~\ref{prp:b as resolvent}.
 Using that $\abs{\zeta}\ge N^{-c_*\epsilon}$ and $\Delta_\zeta\ge  N^{-c_*\epsilon}$  from the
 conditions of part (ii), similarly  to as in the proof of $(i)$ we have 
 $\dist(0, \supp \rho_\zeta) \gtrsim N^{-C_* \1\eps}$ 
 for some constant $C_*>0$, depending on $\abs{\varrho}$ and $L$ in \eqref{eq:prim}. Then we apply Theorem~2.1 from \cite{EKS} to the matrix $\bs{H}_\zeta$ using that for its resolvent at the origin $(\bs{H}_\zeta^{-1})_{21}=(X-\zeta)^{-1}$ and that $(\bs{M}_\zeta(0))_{21} =\frak{D}_{\frak{b}(\zeta)}$ by \eqref{form of 2x2 M} and \eqref{ABDetaat0}.  
\end{proof}

\section{Hermitization for resolvent products}\label{sec:herm}

In the previous section we hermitized the resolvent $(X-\zeta)^{-1}$ of $X$ via \eqref{2x2 symmetrization}
 and studied its deterministic limit via the MDE \eqref{2x2 MDE} for $2N\times 2N$ matrices, 
in fact studying $2\times 2$ block diagonal matrices was sufficient. From this analysis we proved
that the spectrum of $X$ concentrates close to a deterministic set, the self-consistent pseudospectrum.
Moreover, the resolvent $(X-\zeta)^{-1}$ is sufficient to compute $\tr_N f(X)$ for analytic functions $f$.
For our basic quantity $\tr_N f(X)g(X^*)$, however, we need to understand the product of two resolvents
$(X-\zeta_1)^{-1}$ and $(X^*-\ol{\zeta}_2)^{-1}$ with different spectral parameters, which requires a bigger
hermitization.

In this section 
 we will linearize and hermitize the product $(X-\zeta_1)^{-1}(X^*-\ol{\zeta}_2)^{-1}$. % 
For this purpose  we introduce $4 \times 4$-block matrices consisting of $N \times N$ blocks. To distinguish the matrices of various sizes, we now introduce some notation  that will be valid in Sections~\ref{sec:herm}--\ref{sec:MDE}.
 In what follows bold capital letters (for example $\bs{R}$)  are used to denote $4N \times 4N$ matrices and bold calligraphic letters (for example $\bs{\cal{S}}$) to denote linear operators on $4N \times 4N$ matrices. Operators on $N \times N$ matrices are denoted by script letters (for example $\scr{S}$).
Given a $4 N \times 4 N$ block matrix $\bs{R}$ we define $R_{ij}$, $1 \leq i,j \leq 4$, to be its $(i,j)$ block. Given an $N\times N$ matrix $R$, the matrix $\bs{E}_{ij}(R)$, $1 \leq i,j \leq 4$, is a $4 N \times 4 N$ block matrix with $(i,j)$ block equal to $R$ and the remaining blocks equal to zero. We use the shorthands $\bs{E}_{ij}(r):= \bs{E}_{ij}(\frak{D}_r)$ for $r \in \C^{N}$, as well as $\bs{E}_{ij}:= \bs{E}_{ij}(I)$. 
The norm, $\|\cdot \|$, when applied to matrices will denote the usual operator norm induced by the Euclidean vector norm. When $\|\cdot \|$ is applied to operators acting on matrices, it denotes the 
operator norm induced by the matrix norm $\| \cdot\|$. 

\subsection{Structure of MDE}
Given analytic functions $f$ and $g$, we will show  that to compute $f(X)g(X^*)$, it suffices to consider the following three parameter family of matrices $\bs{H}^{\mathfrak{Z}}_\alpha$ 
with some small $\alpha\in \R$ and $\mathfrak{Z}:=(\zeta_1,\zeta_2) \in \C^2$, as well as their resolvents $\bs{G}^{\mathfrak{Z}}_\alpha(z)$ at a spectral parameter $z$ in the upper half plane, $z \in \C^+$:

\bels{def of H alpha}{\bs{H}^{\mathfrak{Z}}_\alpha := 
\mfour
{0 & 0 & 0 & X^* - \ol{\zeta}_2 }
{ 0 & 0 & X -\zeta_1  & \alpha }
{0 & X^*- \ol{\zeta}_1  & 0 & 0}
{ X - {\zeta}_2& \alpha & 0 & 0}, \qquad  \bs{G}^{\mathfrak{Z}}_\alpha(z) :=  (\bs{H}^{\mathfrak{Z}}_\alpha  - z )^{-1}.}
The reason for constructing $\bs{H}^{\mathfrak{Z}}_\alpha$ in this way is that $\partial_\alpha(\bs{G}^{\mathfrak{Z}}_\alpha(0))_{31}|_{\alpha=0} = (X-\zeta_1)^{-1}(X^*-\ol{\zeta}_2)^{-1}$ whenever both sides exist. Therefore the asymptotic analysis of the resolvent $\bs{G}^{\mathfrak{Z}}_\alpha(0))_{31}$ of the hermitian matrix $\bs{H}^{\mathfrak{Z}}_\alpha$ provides information about the resolvent product of interest. Furthermore, for $\alpha=0$, the matrix $\bs{H}^{\mathfrak{Z}}_\alpha$ decouples into the direct sum of two linearizations of the form \eqref{2x2 symmetrization} at $\zeta = \zeta_1$ and $\zeta = \zeta_2$, respectively.

The MDE \eqref{eq:genMDE} corresponding to this $4N \times 4N$ matrix takes the form
\begin{equation} \label{eq:MDE} -\bs{M}^{\mathfrak{Z}}_\alpha(z)^{-1}  = z \bs{I} + \bs{A}^{\mathfrak{Z}} + \bs{\alpha} + \bs{\cal{S}}[\bs{M}^{\mathfrak{Z}}_\alpha(z)] ,
\end{equation}
where $\bs{\cal{S}}: \C^{4N \times 4N} \to \C^{4N \times 4N}$ is given by
\[
\bs{\cal{S}}[\bs{R}]
\,=\,
\mfour
{ \scr{S}^*[R_{44}] &\scr{T}^*[R_{43}] & \scr{S}^*[R_{42}] &  \scr{T}^*[R_{41}]}
{\scr{T}[R_{34}] & \scr{S}[R_{33}] & \scr{T}[R_{32}]& \scr{S}[R_{31}]}
{\scr{S}^*[R_{24}]&\scr{T}^*[R_{23}]& \scr{S}^*[R_{22} ] &\scr{T}^*[R_{21}] }
{\scr{T}[R_{14}]  &\scr{S} [R_{13}]  & \scr{T}[R_{12}]  &\scr{S}[ R_{11}]}\,,
\]
\[ \bs{A}^{\mathfrak{Z}} = \mfour
{0 & 0 & 0 &  \ol{\zeta}_2 }
{ 0 & 0 & {\zeta_1} &0}
{0 & \ol{\zeta_1} & 0 & 0}
{ \zeta_2 & 0& 0 & 0},\quad \bs{\alpha} = \alpha( \bs{E}_{24} +\bs{E}_{42}).\]
 The operators $\scr{S},\scr{S}^*, \scr{T}$ and $\scr{T}^*$ were introduced in \eqref{def S and T operators} and \eqref{def S* and T* operators}.

We let  $\bs{M}^{\mathfrak{Z}}_\alpha(z)$ denote the unique solution to \eqref{eq:MDE} with positive imaginary part. To this solution we associate the self-consistent density of states as in \eqref{scDOS}, i.e.  
a probability measure $\rho^{\mathfrak{Z}}_\alpha$ on $\R$ such that 
\begin{equation} \label{def of DOS}  \tr_{4N}  {\bs{M}}^{\mathfrak{Z}}_\alpha(z) = \int_\R \frac{\rho^{\mathfrak{Z}}_\alpha(dx) }{x-z}  . \end{equation}

We will consider $\bs{M}^{\mathfrak{Z}}_\alpha(z)$ at $z=\ii \eta$,  $\eta>0$,  and its derivative with respect to $\alpha$, for $\alpha$ in a small neighborhood of $0$.  As we already mentioned,
when $\alpha=0$, equation \eqref{eq:MDE} decouples into two sets of equations, one depending on $\zeta_1$ and one on $\zeta_2$. 
The restriction of \eqref{eq:MDE} to either of these equations carries the information for a single resolvent $(X -  \zeta)^{-1}$, in particular it allows us to determine the location of the pseudospectrum of $X$ in the $N \to \infty$ limit as demonstrated in Theorem~\ref{prp:b as resolvent}-$(i)$.
%Lemma~\ref{lmm:concentration of spectrum}.
 When considering this restriction to the first and fourth blocks and to the second and third blocks, we have the following identification 
\begin{equation} \label{def:ABD} 
\bs{M}_{\zeta_1}(\ii \eta) = \mtwo{\left({\bs{M}}^{\mathfrak{Z}}_0(\ii \eta)\right)_{22} & \left({\bs{M}}^{\mathfrak{Z}}_0(\ii \eta)\right)_{23}  }{\left({\bs{M}}^{\mathfrak{Z}}_0(\ii \eta)\right)_{32}  & \left({\bs{M}}^{\mathfrak{Z}}_0(\ii \eta)\right)_{33}  }\,, \qquad
\bs{M}_{\zeta_2}(\ii \eta) = \mtwo{\left({\bs{M}}^{\mathfrak{Z}}_0(\ii \eta)\right)_{11} & \left({\bs{M}}^{\mathfrak{Z}}_0(\ii \eta)\right)_{41}  }{\left({\bs{M}}^{\mathfrak{Z}}_0(\ii \eta)\right)_{14}  & \left({\bs{M}}^{\mathfrak{Z}}_0(\ii \eta)\right)_{44}  }
,
 \end{equation}
 where $\bs{M}_\zeta(\ii \eta)$ is the solution of the $2 \times 2$-MDE \eqref{2x2 MDE} 
  with block-diagonal structure given in   \eqref{form of 2x2 M}. All other entries of ${\bs{M}}^{\mathfrak{Z}}_0(\ii \eta)$ vanish identically.

\subsection{Stability of MDE at zero}\label{sec:MDEat0}
In this section we consider the MDE \eqref{eq:MDE} for some $\mathfrak{Z}=(\zeta_1, \zeta_2)
\in {\cal{R}}\times {\cal{R}}$ when $\alpha = \eta = 0$,
 and bound the stability operator at this point (cf. \eqref{gen stability operator}). 
 Recall  the key information on the vector $\mathfrak{b}(\zeta) $ summarized in 
 Proposition~\ref{prp:Solution of vector equation}. 
  In what follows, we use the short hand $\mathfrak {b}_i =\mathfrak {b}(\zeta_i) $, $i=1,2$. 
  With the extension of the solution to the  $2\times 2$ block MDE \eqref{2x2 MDE} 
  to $\eta =0$ from Lemma~\ref{lemma:boundedM}, we have that 
\begin{equation}
 \label{eq:MDEat0}
\bs{M}_0^{\mathfrak{Z}}(0)\,:=\,\bs{E}_{14}\p{ \mathfrak {b}_2 }+\,\bs{E}_{23}\p{\ol{\mathfrak {b}}_1 }+\,\bs{E}_{32}\p{ \mathfrak {b}_1}+\,\bs{E}_{41}\p{ \ol{\mathfrak {b}}_2}
\end{equation}
solves  \eqref{eq:MDE}, with $\eta =  \alpha = 0$. Recall  that for any vector $r\in \C^N$ 
 the matrix $\bs{E}_{ij}(r)$ is the $4 N \times 4 N$ block matrix with $(i,j)$ block equal to the diagonal matrix  $\frak{D}_r$ and the remaining blocks equal to zero. 
Using $\bs{M}=\bs{M}_0^{\mathfrak{Z}}(0)$ we consider the stability operator of the MDE \eqref{eq:MDE}
 $\bs{\cal{L}}:=\bs{1}-\bs{\cal{C}}_{\bs{M}}\bs{\cal{S}}$. Here we used the notation
$\bs{\cal{C}}_{\bs{M}}[\bs{R}]:= \bs{M} \bs{R}\bs{M}$ for the sandwiching operator acting on any  matrix $\bs{R}$.
Bounding the inverse of the stability operator at $\alpha=\eta=0$ will allow us to deduce properties of the solution to the MDE in a neighborhood of $\alpha=\eta=0$.

We will work on the space of $4N\times 4N$-matrices equipped with the norm $\| \cdot \|$, induced by the Euclidean norm,  as well as with the
 Hilbert-Schmidt norm, $\| \bs{R} \|_{HS}:= ( \tr_{4N} \bs{R}\bs{R }^*)^{1/2}$.  
 These norms induce the  operator norm $\|\cdot \|$ and  the spectral norm,
$\|\cdot \|_{sp}$, respectively, on operators acting on such matrices.

\begin{lemma}\label{lem:CMnorm}
Assume that $\mathfrak{Z} =(\zeta_1, \zeta_2) $ with $\zeta_1,\zeta_2 \in {\cal{R}}\setminus \D_\xi$ for some $\xi \in[ N^{-c},1]$ with a sufficiently small universal constant $c>0$.
 Then the following bound holds:
\begin{equation} \label{eq:stabopbound} \| (\bs{1}-\bs{\cal{C}}_{\bs{M}_0^{\mathfrak{Z}}(0)}\bs{\cal{S}})^{-1}\| \lesssim^\xi \Delta_\mathfrak{Z}^{-1}, \end{equation}
  where the implicit constant in \eqref{eq:stabopbound} depends on  the model parameters and we defined
  \bels{definition of Delta Zeta}{
  \Delta_{\mathfrak{Z}}:= \min\{\Delta_{\zeta_1}, \Delta_{\zeta_2}\}\,,
  }
  with $\Delta_\zeta$ as in \eqref{def of Delta zeta}.
\end{lemma}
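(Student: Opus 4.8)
The plan is to exploit the very simple, block--anti-diagonal shape of $\bs M:=\bs M_0^{\mathfrak Z}(0)$ from \eqref{eq:MDEat0} to decompose the stability operator $\bs{\cal L}:=\bs 1-\bs{\cal C}_{\bs M}\bs{\cal S}$ into sixteen elementary operators on $\C^N$, and then to estimate each of them by the quantitative Perron--Frobenius argument already used in Step~1 of the proof of Lemma~\ref{lmm:concentration of spectrum}. Since $\bs M$ has non-zero $N\times N$ blocks only in the positions $(1,4),(2,3),(3,2),(4,1)$, each of them a diagonal matrix built from $\mathfrak b_1,\mathfrak b_2$ and their conjugates, the conjugation $\bs{\cal C}_{\bs M}$ carries the $(i,j)$-block to the $(\tau(i),\tau(j))$-block with the involution $\tau=(14)(23)$, multiplying it on each side by a diagonal matrix; and inspecting the definition of $\bs{\cal S}$ in \eqref{eq:MDE} one sees that $\bs{\cal S}$ also moves the $(i,j)$-block to the $(\tau(i),\tau(j))$-block (note $\tau(k)=5-k$), applying to it one of $\scr S,\scr S^*,\scr T,\scr T^*$. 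Hence $\bs{\cal C}_{\bs M}\bs{\cal S}$ sends the $(i,j)$-block back to the $(i,j)$-block and, using that $\scr S,\scr S^*,\scr T,\scr T^*$ annihilate matrices with vanishing diagonal (only the second moments $s_{ij}$ and $t_{ij}$ enter the elliptic-type model), $\bs{\cal L}$ is the identity on block matrices with off-diagonal $N\times N$ blocks while, on the complementary subspace $\C^{4\times4}\otimes\C^N$, it decouples into sixteen operators on $\C^N$,
\[
\cal L_{ij}=1-\frak D_{\beta^{(ij)}_1\beta^{(ij)}_2}Z^{(ij)},\qquad \beta^{(ij)}_l\in\{\mathfrak b_1,\mathfrak b_2,\ol{\mathfrak b}_1,\ol{\mathfrak b}_2\},\quad Z^{(ij)}\in\{S,S^*,T,T^*\}.
\]
A short comparison of operator norms for block matrices with diagonal blocks then gives $\|\bs{\cal L}^{-1}\|\lesssim 1+\max_{i,j}\|\cal L_{ij}^{-1}\|$, so it suffices to show $\|\cal L_{ij}^{-1}\|\lesssim^\xi\Delta_{\mathfrak Z}^{-1}$ for each block.

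Going through the sixteen pairs, there are three types: the \emph{diagonal} blocks, with $Z\in\{S,S^*\}$ and weight $\abs{\mathfrak b_i}^2$ for a single $i\in\{1,2\}$; the \emph{$T$-blocks}, with $Z\in\{T,T^*\}$ and weight $\mathfrak b_i^2$ or $\mathfrak b_1\mathfrak b_2$ (up to conjugation); and the \emph{mixed} blocks, with $Z\in\{S,S^*\}$ and weight $\ol{\mathfrak b}_1\mathfrak b_2$ or $\mathfrak b_1\ol{\mathfrak b}_2$ --- the last being exactly the operator whose inverse produces the kernel $K(\zeta_1,\zeta_2)$ of \eqref{def:Kell}. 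Throughout I would use $\abs{\mathfrak b_i}\sim^\xi 1$ and the holomorphic-extension and derivative bounds of Lemma~\ref{lmm:bbound}, available because $\zeta_1,\zeta_2\in\wh{\cal E}\setminus\D_\xi$. For a diagonal block, normalising $\frak D_{\abs{\mathfrak b_i}^2}S$ by its spectral radius and invoking Lemma~\ref{lemm:quanprim} --- whose hypothesis \eqref{quanprim} holds with $\eps\sim^\xi 1$ thanks to the uniform primitivity \eqref{eq:prim} and $\abs{\mathfrak b_i}\sim^\xi 1$ --- yields $\|\cal L_{ij}^{-1}\|\lesssim^\xi(\frak r(\frak D_{\abs{\mathfrak b_i}^2}S)^{-1}-1)^{-1}\lesssim^\xi\Delta_{\zeta_i}^{-1}\le\Delta_{\mathfrak Z}^{-1}$, exactly as in Step~1 of the proof of Lemma~\ref{lmm:concentration of spectrum}. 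For a $T$-block, conjugating $\cal L_{ij}$ by $\frak D_{(\beta_1\beta_2)^{1/2}}$ turns it into $1-\frak D_{(\beta_1\beta_2)^{1/2}}Z\frak D_{(\beta_1\beta_2)^{1/2}}$, whose second summand has operator norm at most $\abs{\varrho}\,\frak r\big(\frak D_{\abs{\mathfrak b_1}\abs{\mathfrak b_2}}(S^{(1/2)}\odot(S^*)^{(1/2)})\big)\le\abs{\varrho}$ by Assumption~(2.D) and \eqref{eq:hadspecrad} (this is the computation behind \eqref{bound on op norm of bTb}); since un-conjugating costs only a factor $\lesssim^\xi 1$, one obtains $\|\cal L_{ij}^{-1}\|\lesssim^\xi(1-\abs{\varrho})^{-1}\lesssim 1\le\Delta_{\mathfrak Z}^{-1}$.

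The crux is the mixed block. For $\cal L_{ij}=1-\frak D_{\ol{\mathfrak b}_1\mathfrak b_2}S$ (and its $S^*$-variant) I would first pass, by entrywise domination of the Neumann series, to the non-negative operator $1-\frak D_{\abs{\mathfrak b_1}\abs{\mathfrak b_2}}S$ --- which is legitimate once its spectral radius is shown to be $<1$ and which gives $\|\cal L_{ij}^{-1}\|\le\|(1-\frak D_{\abs{\mathfrak b_1}\abs{\mathfrak b_2}}S)^{-1}\|$, because $\abs{\frak D_{\ol{\mathfrak b}_1\mathfrak b_2}S}=\frak D_{\abs{\mathfrak b_1}\abs{\mathfrak b_2}}S$ entrywise. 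Writing $\frak D_{\abs{\mathfrak b_1}\abs{\mathfrak b_2}}S$ entrywise as the Hadamard square-root product of $\frak D_{\abs{\mathfrak b_1}^2}S$ and $\frak D_{\abs{\mathfrak b_2}^2}S$ and applying \eqref{eq:hadspecrad},
\[
\frak r\big(\frak D_{\abs{\mathfrak b_1}\abs{\mathfrak b_2}}S\big)\le\big(\frak r(\frak D_{\abs{\mathfrak b_1}^2}S)\,\frak r(\frak D_{\abs{\mathfrak b_2}^2}S)\big)^{1/2}\le\big((1+\Delta_{\zeta_1})(1+\Delta_{\zeta_2})\big)^{-1/2}\le 1-c\,\Delta_{\mathfrak Z}
\]
for a universal $c>0$, using \eqref{def of Delta zeta} and \eqref{definition of Delta Zeta}. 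Finally, normalising $\frak D_{\abs{\mathfrak b_1}\abs{\mathfrak b_2}}S$ by its spectral radius and applying Lemma~\ref{lemm:quanprim} once more --- its hypothesis \eqref{quanprim} holds with $\eps\sim^\xi 1$ because $\frak r(S)\ge c_0^{1/L}$ is bounded below by a model constant (a consequence of $[S^L]_{ij}\ge c_0/N$) so the normalisation is harmless, and $\abs{\mathfrak b_i}\sim^\xi 1$ --- converts the spectral gap into $\|(1-\frak D_{\abs{\mathfrak b_1}\abs{\mathfrak b_2}}S)^{-1}\|\lesssim^\xi\Delta_{\mathfrak Z}^{-1}$, hence $\|\cal L_{ij}^{-1}\|\lesssim^\xi\Delta_{\mathfrak Z}^{-1}$. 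Combining the three cases with the first-step norm comparison yields \eqref{eq:stabopbound}.

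I expect the mixed block to be the only genuinely new difficulty over the $2\times2$ analysis. The operator $1-\frak D_{\ol{\mathfrak b}_1\mathfrak b_2}S$ carries a complex diagonal weight built from \emph{two different} spectral parameters, so it is neither self-adjoint nor similar to a self-adjoint operator, and a spectral-gap estimate alone does not control the norm of its inverse --- this is precisely where the quantitative primitivity of $S$ (Lemma~\ref{lemm:quanprim}) is indispensable; moreover one has to show that the relevant gap is governed by $\Delta_{\mathfrak Z}=\min\{\Delta_{\zeta_1},\Delta_{\zeta_2}\}$ and not by something smaller such as $\Delta_{\zeta_1}\Delta_{\zeta_2}$, which is what the Hadamard spectral-radius inequality \eqref{eq:hadspecrad} buys. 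The remaining ingredients --- the block decomposition, the bound $\|\frak D_{\mathfrak b}T\frak D_{\mathfrak b}\|\le\abs{\varrho}$ from \eqref{bound on op norm of bTb}, and the uniform control of $\mathfrak b_i$ away from the origin --- are already in place, and the bookkeeping of the $\xi$-powers through the argument is routine.
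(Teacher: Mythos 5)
Your overall strategy is the same as the paper's: decompose the stability operator according to the $4\times4$ block structure, observe that each block operator has the form $\cal{L}_{ij}[R]=R-\frak{D}_{b_l}(\cal{Z}[R])\frak{D}_{b_r}$ with $\cal{Z}\in\{\scr{S},\scr{S}^*,\scr{T},\scr{T}^*\}$ and $b_l,b_r\in\{\frak{b}_1,\ol{\frak{b}}_1,\frak{b}_2,\ol{\frak{b}}_2\}$, and on diagonal matrices bound $(1-\frak{D}_{b_lb_r}Z)^{-1}$ via Neumann-series domination by entrywise absolute values, the Hadamard spectral-radius inequality \eqref{eq:hadspecrad}, and the quantitative Perron--Frobenius bound of Lemma~\ref{lemm:quanprim}. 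Your explicit treatment of the mixed $S$-blocks with weight $\frak{b}_1\ol{\frak{b}}_2$ (domination by $\frak{D}_{\abs{\frak{b}_1}\abs{\frak{b}_2}}S$, then $\frak{r}(\frak{D}_{\abs{\frak{b}_1}\abs{\frak{b}_2}}S)\le(\frak{r}(\frak{D}_{\abs{\frak{b}_1}^2}S)\,\frak{r}(\frak{D}_{\abs{\frak{b}_2}^2}S))^{1/2}\le 1-c\Delta_{\mathfrak{Z}}$, then Lemma~\ref{lemm:quanprim}) is exactly the correct two-parameter adaptation of Step~1 of the proof of Lemma~\ref{lmm:concentration of spectrum}, which the paper invokes only implicitly; that part of your argument is sound and in fact more detailed than the published text.

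There is, however, one step that fails as stated: the claim that $\scr{S},\scr{S}^*,\scr{T},\scr{T}^*$ annihilate matrices with vanishing diagonal, so that $\bs{\cal{L}}$ is the identity on blocks with hollow entries. For an elliptic-type ensemble the quantities $\E[x_{ij}\ol{x}_{ji}]$ and $\E[x_{ij}^2]$ need not vanish (for real entries they equal $t_{ij}$ and $s_{ij}$), and these covariances do enter $\scr{S}$ and $\scr{T}$ on off-diagonal matrices: for hollow $R$ one has, e.g., $(\scr{S}[R])_{ab}=\E[x_{ab}\ol{x}_{ba}]\,R_{ba}$ for $a\ne b$, which is generically nonzero. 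The correct statement, and the one the paper uses, is that these operators are \emph{small} on hollow matrices, $\norm{\cal{Z}[R]}_{HS}\lesssim N^{-1}\norm{R}_{HS}$, so that after sandwiching with $\frak{D}_{b_l},\frak{D}_{b_r}$ and using $\norm{\frak{b}_i}_\infty\lesssim\abs{\zeta_i}^{-1}\le\xi^{-1}\le N^{c}$ one gets $\norm{\frak{D}_{b_l}(\cal{Z}[R])\frak{D}_{b_r}}_{HS}\le N^{-1/2}\norm{R}_{HS}$; hence $\cal{L}_{ij}$ is a small perturbation of the identity on the hollow subspace. This is precisely where the hypothesis $\xi\ge N^{-c}$ with a sufficiently small universal $c$ enters -- your version never uses it on the hollow part, which is the telltale sign of the overreach. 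The same repair also underlies your norm-comparison step $\norm{\bs{\cal{L}}^{-1}}\lesssim 1+\max_{ij}\norm{\cal{L}_{ij}^{-1}}$, which in the paper is carried out by first bounding $\norm{\bs{\cal{L}}^{-1}}_{sp}$ and then passing to $\norm{\bs{\cal{L}}^{-1}}$ as in Lemma~3.1 of \cite{EKR}. With these corrections your argument coincides with the paper's proof.
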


\begin{proof} Set $ \bs{\cal{L}}:=\bs{1}-\bs{\cal{C}}_{\bs{M}_0^{\mathfrak{Z}}(0)}\bs{\cal{S}}$. 
First note that  it suffices to bound  $\| \bs{\cal{L}}^{-1}\|_{sp}$ %
since it directly implies a comparable bound for  $\| \bs{\cal{L}}^{-1}\|$ exactly as in  \cite{EKR}, Lemma~3.1.
Similarly to the $2\times 2$-setting from the proof of Proposition~\ref{prp:Solution of vector equation}, the operator 
 $\bs{\cal{L}}$ leaves the blocks in the $4 \times 4$-block structure on $\C^{4N \times 4N}$ invariant, i.e. there are operators $\cal{L}_{ij}: \C^{N \times N} \to \C^{N \times N}$ such that $\bs{\cal{L}}[\bs{E}_{ij}(R)] = \bs{E}_{ij} (\cal{L}_{ij}[R])$ for $i,j =1, \dots ,4$. 
  Each operator $\cal{L}_{ij}$ is of the form $\cal{L}_{ij} [R]=R-\frak{D}_{b_l}(\cal{Z}[R])\frak{D}_{b_r} $ 
   for some choice  $\cal{Z}% 
  \in \{ \scr{S},\scr{S}^*, \scr{T}$ , $\scr{T}^*\}$
and $b_l ,b_r\in \{\frak{b}_1,\ol{\frak{b}}_1,\frak{b}_2,\ol{\frak{b}}_2\}$
 depending on $(i,j)$. Recall that $\abs{\frak{b}_i} \sim^\xi 1$ from Lemma~\ref{lmm:bbound}. We already encountered this situation in the proof of Theorem~\ref{prp:b as resolvent}-$(i)$ %Lemma~\ref{lmm:concentration of spectrum} 
 with the only difference being that now we have two spectral parameters $\zeta_1$ and $\zeta_2$ and thus two vector valued functions   $\frak{b}_1$ and $\frak{b}_2$. However, this does not effect the proof of the upper bound 
 on $\cal{L}_{ij}^{-1}$  from \eqref{estimate for Lzeta} when restricted to block diagonal matrices, except that the expression on the right hand side is now bounded by  $\Delta_\mathfrak{Z}^{-1}$ instead of $\Delta_\zeta^{-1}$.  We recall that both diagonal and off-diagonal matrices are left invariant by $\cal{L}_{ij}$. Hence we have now proved invertibility on block diagonal matrices 
  
  It remains to show the bound on $\cal{L}_{ij}^{-1}$ when restricted to off-diagonal matrices, i.e. matrices $R \in \C^{N \times N}$ with 
  $r_{kk}=0$. For this purpose we use the easily  checkable bound $\norm{\cal{Z}[R]}_{HS}\lesssim \frac{1}{N} \norm{R}_{HS}$ for off-diagonal $R$. 
   The upper bound on $\frak{b}$ in \eqref{bbound}  together with 
     $|\zeta_i|\ge N^{-c}$  for some small $c>0$ 
     guarantees that $\| \frak{D}_{b_l}(\cal{Z}[R])\frak{D}_{b_r}\|_{HS} \le N^{-1/2} \| R\|_{HS}$,
     i.e.  $\cal{L}_{ij}$ has bounded inverse on off-diagonal matrices.  This establishes  \eqref{eq:stabopbound}. 
\end{proof}

\subsection{Expansion of the MDE near zero} \label{sec:nerozero}
We now extend the bound on the solution to the MDE and on the stability operator   from the special $\alpha=\eta=0$ case discussed in 
Section~\ref{sec:MDEat0} to an entire neighborhood  
\bels{def of Upsilon}{
\Upsilon_{\mathfrak{Z}} := \{ (\alpha,z)\in \R\times \C  :  |\alpha|, |z| <  \kappa \Delta_\mathfrak{Z}^2/4   \},
}
where $\Delta_{\mathfrak{Z}}$ was defined in \eqref{definition of Delta Zeta}. The value of $\kappa\gtrsim^\xi 1$ is chosen sufficiently small  in the proof of Lemma~\ref{lem:nearzero} below and $\xi$ is the lower bound on the distance of $\zeta_1,\zeta_2$ to zero.
We recall the notation  $\D_\xi:= \{ z: |z|< \xi\}\subset\C$.
\begin{lemma} \label{lem:nearzero} 
The solution $\bs{M}^{\mathfrak{Z}}_{\alpha}(z)$ to the MDE \eqref{eq:MDE}, with $\alpha \in \R$ and $\Im z > 0$ 
has a unique smooth extension to all $(\mathfrak{Z},\alpha,z)$, where  $\mathfrak{Z}=(\zeta_1,\zeta_2)$ with  $ \zeta_1,\zeta_2 \in{\cal{R}}\setminus\D_\xi$ and  $(\alpha,z)\in \Upsilon_\mathfrak{Z}$, provided $\kappa$ is chosen sufficiently small,  depending on  the model parameters and $\xi$. Here,  $\xi \in[ N^{-c},1]$ with some universal constant $c>0$ and $\kappa$ depends on $\xi$ at most polynomially, i.e. $\kappa\gtrsim^\xi 1$. This extension is analytic in the variables $(\re \zeta_1, \im \zeta_1,\re \zeta_2, \im \zeta_2,\alpha,z)$. 
Moreover, for  $(\alpha,z)\in \Upsilon_\mathfrak{Z}$, the following hold: 
\begin{enumerate}
\item[(1)]  $\bs{M}^{\mathfrak{Z}}_{\alpha}(z)$ satisfies the bound 
\begin{equation} \label{eq:contofM} \| \bs{M}^{\mathfrak{Z}}_0(0) - {\bs{M}}^{\mathfrak{Z}'}_\alpha(z)  \| \lesssim^\xi (|\alpha| + |z|+|\mathfrak{Z} - \mathfrak{Z}'|) \Delta_{\mathfrak{Z}}^{-1}   , \end{equation}
for any $\mathfrak{Z}' \in {\cal{R}}\times {\cal{R}}$ such that $|\mathfrak{Z} - \mathfrak{Z}'| \leq  \kappa \Delta_{\mathfrak{Z}}^{2}.$ 
\item[(2)]  the inverse of the stability operator satisfies the bound 
\begin{equation} \label{eq:stabop}  \|   ( \bs{1} -\bs{\cal{C}}_{\bs{ M}^{\mathfrak{Z}}_\alpha(z)}\bs{\cal{S}})^{-1} \| \lesssim^\xi \Delta_{\mathfrak{Z}}^{-1}. \end{equation}
\item[(3)]  when $\alpha$ and $z$ are real, the solution $ {\bs{M}}^{\mathfrak{Z}}_\alpha(z)$ is self-adjoint.
\end{enumerate}
 The implicit constants in these statements depend 
 on the model parameters.  
\end{lemma}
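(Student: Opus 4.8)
The plan is to produce $\bs{M}^{\mathfrak{Z}}_\alpha(z)$ near $(\alpha,z)=(0,0)$ by a quantitative implicit function theorem applied to the MDE \eqref{eq:MDE}, written as the zero set of $\bs{J}_{\mathfrak{Z},\alpha,z}(\bs{R}):=\bs{R}-\Phi_{\mathfrak{Z},\alpha,z}(\bs{R})$ with $\Phi_{\mathfrak{Z},\alpha,z}(\bs{R}):=-(z\bs{I}+\bs{A}^{\mathfrak{Z}}+\bs{\alpha}+\bs{\cal{S}}[\bs{R}])^{-1}$, around the explicit base solution $\bs{M}^{\mathfrak{Z}}_0(0)$ from \eqref{eq:MDEat0}. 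At the base point the equation is solved by construction; the map $\Phi$ is analytic there, because $z\bs{I}+\bs{A}^{\mathfrak{Z}}+\bs{\cal{S}}[\bs{M}^{\mathfrak{Z}}_0(0)]=-\bs{M}^{\mathfrak{Z}}_0(0)^{-1}$ is invertible (using the lower bound $\abs{\frak{b}_i}\gtrsim^\xi 1$ from \eqref{bbound}); and its linearization in $\bs{R}$ at $\bs{M}^{\mathfrak{Z}}_0(0)$ is precisely the stability operator $\bs{\cal{L}}=\bs{1}-\bs{\cal{C}}_{\bs{M}^{\mathfrak{Z}}_0(0)}\bs{\cal{S}}$ of \eqref{gen stability operator}, which Lemma~\ref{lem:CMnorm} shows to be invertible with $\|\bs{\cal{L}}^{-1}\|\lesssim^\xi\Delta_{\mathfrak{Z}}^{-1}$. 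Combined with the a priori bounds $\|\bs{M}^{\mathfrak{Z}}_0(0)\|\lesssim^\xi 1$, $\|\bs{M}^{\mathfrak{Z}}_0(0)^{-1}\|\lesssim^\xi 1$ and $\|\bs{\cal{S}}\|\lesssim 1$ (the latter since the moment bound in Assumption~(B) gives $s_{ij},\abs{t_{ij}}\lesssim 1/N$ and hence bounded row sums), all higher derivatives of $\bs{J}$ in $\bs{R}$ and its first derivatives in $(\re\zeta_1,\im\zeta_1,\re\zeta_2,\im\zeta_2,\alpha,z)$ have norm $\lesssim^\xi 1$.

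First I would run the Newton--Kantorovich/contraction estimate: solvability of $\bs{J}_{\mathfrak{Z}',\alpha,z}(\bs{R})=0$ in a small ball around $\bs{M}^{\mathfrak{Z}}_0(0)$ requires the product of $\|\bs{\cal{L}}^{-1}\|$, the Lipschitz constant of the linearization (which is $\lesssim^\xi 1$), and the size of the residual $\|\bs{J}_{\mathfrak{Z}',\alpha,z}(\bs{M}^{\mathfrak{Z}}_0(0))\|$ to be small. The residual vanishes at the base point and depends smoothly on its arguments, so it is $\lesssim^\xi \abs{\alpha}+\abs{z}+\abs{\mathfrak{Z}-\mathfrak{Z}'}$; since $\|\bs{\cal{L}}^{-1}\|^2\lesssim^\xi\Delta_{\mathfrak{Z}}^{-2}$ enters the smallness condition quadratically, the condition holds as soon as $\abs{\alpha}+\abs{z}+\abs{\mathfrak{Z}-\mathfrak{Z}'}\le\kappa\Delta_{\mathfrak{Z}}^2$ with $\kappa\gtrsim^\xi 1$ chosen sufficiently small. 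This is exactly the set $\Upsilon_{\mathfrak{Z}}$ (and the permitted range of $\mathfrak{Z}'$) in the statement, and it fixes $\kappa$. The resulting unique fixed point $\bs{M}^{\mathfrak{Z}'}_\alpha(z)$ is analytic in the six real variables by the analytic implicit function theorem, and the first Newton step directly yields the continuity bound \eqref{eq:contofM}: $\|\bs{M}^{\mathfrak{Z}}_0(0)-\bs{M}^{\mathfrak{Z}'}_\alpha(z)\|\lesssim\|\bs{\cal{L}}^{-1}\|\,\|\bs{J}_{\mathfrak{Z}',\alpha,z}(\bs{M}^{\mathfrak{Z}}_0(0))\|\lesssim^\xi\Delta_{\mathfrak{Z}}^{-1}(\abs{\alpha}+\abs{z}+\abs{\mathfrak{Z}-\mathfrak{Z}'})$.

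For part~(2) I would perturb the stability operator, writing $\bs{1}-\bs{\cal{C}}_{\bs{M}^{\mathfrak{Z}'}_\alpha(z)}\bs{\cal{S}}=\bs{\cal{L}}\bigl(\bs{1}-\bs{\cal{L}}^{-1}(\bs{\cal{C}}_{\bs{M}^{\mathfrak{Z}}_0(0)}-\bs{\cal{C}}_{\bs{M}^{\mathfrak{Z}'}_\alpha(z)})\bs{\cal{S}}\bigr)$ and using $\|\bs{\cal{C}}_{\bs{P}}-\bs{\cal{C}}_{\bs{Q}}\|\lesssim^\xi\|\bs{P}-\bs{Q}\|$ for $\bs{P},\bs{Q}$ of norm $\lesssim^\xi 1$; by \eqref{eq:contofM} and the definition of $\Upsilon_{\mathfrak{Z}}$ the correction term then has operator norm $\lesssim^\xi\Delta_{\mathfrak{Z}}^{-1}\cdot\Delta_{\mathfrak{Z}}^{-1}\kappa\Delta_{\mathfrak{Z}}^2=\kappa\,\xi^{-C}<\tfrac12$ after further shrinking $\kappa$, and a Neumann series gives \eqref{eq:stabop}. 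For part~(3) and the identification of the extension with the genuine MDE solution when $\im z>0$ I would argue as in the proofs of Proposition~\ref{prp:Solution of vector equation} and Lemma~\ref{lmm:concentration of spectrum}: for real $\alpha,z$ the map $\Phi_{\mathfrak{Z},\alpha,z}$ preserves self-adjointness (the blocks $\bs{A}^{\mathfrak{Z}},\bs{\alpha}$ are self-adjoint, $\bs{\cal{S}}$ preserves self-adjointness, and inversion does too), so local uniqueness forces $\bs{M}^{\mathfrak{Z}}_\alpha(z)=\bs{M}^{\mathfrak{Z}}_\alpha(z)^*$ there; moreover $\frak{r}(\bs{\cal{C}}_{\bs{M}^{\mathfrak{Z}}_0(0)}\bs{\cal{S}})<1$, because this operator is block diagonal in the $4\times4$ structure with blocks of the form $\frak{D}_{\abs{\frak{b}_i}^2}S$, $\frak{D}_{\abs{\frak{b}_i}^2}S^*$, $\frak{D}_{\frak{b}_i\frak{b}_j}T$, $\frak{D}_{\ol{\frak{b}}_i\ol{\frak{b}}_j}T^*$, whose spectral radii are all $<1$ by Lemma~\ref{lem:specrad} and \eqref{eq:gennonherm}; hence $\bs{\cal{L}}^{-1}=\sum_{k\ge0}(\bs{\cal{C}}_{\bs{M}^{\mathfrak{Z}}_0(0)}\bs{\cal{S}})^k$ is positivity preserving, and differentiating the MDE in $\eta$ at $\eta=0$ gives $\bs{\cal{L}}[\partial_\eta\bs{M}^{\mathfrak{Z}}_0(\ii\eta)|_{\eta=0}]=\ii\,\bs{M}^{\mathfrak{Z}}_0(0)^2$ with $\bs{M}^{\mathfrak{Z}}_0(0)^2$ strictly positive definite, so $\im\bs{M}^{\mathfrak{Z}}_0(\ii\eta)>0$ for small $\eta>0$ (and the same for small real $\alpha$ by continuity of the spectral radius). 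By uniqueness of the MDE solution with positive imaginary part and analyticity in $z$, the extension coincides with the genuine solution on $\Upsilon_{\mathfrak{Z}}\cap\{\im z>0\}$, and uniqueness of the smooth extension on all of $\Upsilon_{\mathfrak{Z}}$ then follows from local uniqueness together with the connectedness of $\Upsilon_{\mathfrak{Z}}$.

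The main obstacle I expect is the careful bookkeeping of the powers of $\Delta_{\mathfrak{Z}}$ and of $\xi$ so that the quadratic radius $\kappa\Delta_{\mathfrak{Z}}^2$ comes out consistently in all three conclusions: the single power of $\Delta_{\mathfrak{Z}}$ lost in $\|\bs{\cal{L}}^{-1}\|$ must be absorbed both by the $\|\bs{\cal{L}}^{-1}\|^2$ appearing in the Newton--Kantorovich smallness condition and by the extra $\|\bs{\cal{L}}^{-1}\|$ in the Neumann series for the perturbed stability operator, which is why the neighborhood shrinks like $\Delta_{\mathfrak{Z}}^2$ rather than $\Delta_{\mathfrak{Z}}$. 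A secondary technical point is verifying $\frak{r}(\bs{\cal{C}}_{\bs{M}^{\mathfrak{Z}}_0(0)}\bs{\cal{S}})<1$ via the block structure and the positivity-preservation it entails, which is what makes the identification of the IFT extension with the genuine MDE solution rigorous.
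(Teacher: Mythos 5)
Your proposal is correct and takes essentially the same route as the paper: the paper's proof of Lemma~\ref{lem:nearzero} is a short reference to the implicit function theorem argument of Subsection~3.2 of \cite{EKR}, anchored on the stability bound \eqref{eq:stabopbound} at $(\alpha,z)=(0,0)$ from Lemma~\ref{lem:CMnorm} and the bound $\|\bs{M}_0^{\mathfrak{Z}}(0)\|\lesssim^\xi 1$ from Lemma~\ref{lmm:bbound}. Your Newton--Kantorovich implementation of the IFT, the Neumann-series perturbation for \eqref{eq:stabop}, and the self-adjointness/positivity-preservation argument identifying the extension with the genuine MDE solution are precisely the details that reference encapsulates.
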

\begin{proof} The proof follows by an application of the implicit function theorem exactly as in Subsection~3.2 of \cite{EKR} using the new definition of $\Delta_\mathfrak{Z}$ from \eqref{definition of Delta Zeta} and the stability bound at $(\alpha, z)=(0,0)$ in \eqref{eq:stabopbound}. 
 The parameter $\mathfrak{Z}$ here plays the same role as the variable denoted by $\zeta$ in \cite{EKR}.   
In \cite{EKR} we explicitly have that $\|\bs{M}_0^\mathfrak{Z}(0) \|$ is bounded by $1$, this was used after (3.8).  In the present case we have the bound $\|\bs{M}_0^\mathfrak{Z}(0) \| \sim \max\{\norm{\frak{b}_1}_{\infty},\norm{\frak{b}_2}_{\infty}\}\lesssim^\xi 1$ from Lemma~\ref{lmm:bbound}; this modification only causes values of the  constants to change.
\end{proof}

The bound on the stability operator in \eqref{eq:stabop} (cf. its general definition in \eqref{gen stability operator}) implies stability of the MDE locally around any point $(\alpha,z)\in \Upsilon_\mathfrak{Z}$ as explained in Section~\ref{sec:Hermitian random matrices and the matrix Dyson equation}.

Bounding the support of the deterministic self-consistent density  of states $\rho_\alpha^\mathfrak{Z}$ from \eqref{def of DOS} away from zero is a key step in the proof of Theorem \ref{thm:mainthmgen} because it allows to apply the local law from \cite{EKS} in the regime away from the asymptotic spectrum. The following proposition is a consequence of Lemma~\ref{lem:nearzero} and  provides such a bound.

\begin{proposition} \label{prop:gapineig}  Let $\mathfrak{Z}=(\zeta_1,\zeta_2)$ with  $ \zeta_1,\zeta_2 \in{\cal{R}}\setminus\D_\xi$ and $\alpha$ such that  $(\alpha,0)\in \Upsilon_\mathfrak{Z}$. Here,  $\xi \in[ N^{-c},1]$ with some universal constant $c>0$. Then
\[ \dist(\supp  \rho_\alpha^\mathfrak{Z} ,0) \geq \kappa \Delta_\mathfrak{Z}^2,\]
where $\kappa\gtrsim^\xi 1$ stems from the definition of $\Upsilon_\mathfrak{Z}$ in \eqref{def of Upsilon}.
\end{proposition}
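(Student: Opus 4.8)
The plan is to read the gap off directly from the analytic continuation of the MDE solution furnished by Lemma~\ref{lem:nearzero}. Recall from \eqref{def of DOS} that $\rho^{\mathfrak{Z}}_\alpha$ is the positive measure on $\R$ whose Stieltjes transform is $m(z):=\tr_N\bs{M}^{\mathfrak{Z}}_\alpha(z)$, which is holomorphic with positive imaginary part on the open upper half-plane. A real point $x_0$ fails to belong to $\supp\rho^{\mathfrak{Z}}_\alpha$ as soon as $m$ extends continuously from $\C^+$ to a real neighborhood of $x_0$ and takes real values there, since the Stieltjes--Perron inversion formula then forces $\rho^{\mathfrak{Z}}_\alpha$ to put no mass on that neighborhood. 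Thus I would reduce the proposition to exhibiting such a continuation across a real interval around $0$ of half-width comparable to $\kappa\Delta_{\mathfrak{Z}}^2$.

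To produce it, I would first fix $\alpha$ with $(\alpha,0)\in\Upsilon_{\mathfrak{Z}}$, so that $|\alpha|<\kappa\Delta_{\mathfrak{Z}}^2/4$ by \eqref{def of Upsilon}; for this fixed $\alpha$ the admissible set of spectral parameters in $\Upsilon_{\mathfrak{Z}}$ is the full disk $D:=\{z\in\C:|z|<\kappa\Delta_{\mathfrak{Z}}^2/4\}$. By Lemma~\ref{lem:nearzero} the physical solution $z\mapsto\bs{M}^{\mathfrak{Z}}_\alpha(z)$, initially defined for $\im z>0$, extends to a function on $D$ that is analytic in $(\re z,\im z)$ and still solves \eqref{eq:MDE}; in particular it is continuous up to $D\cap\R$, so $m$ extends continuously from $\C^+$ to $D$. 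Moreover, by part~(3) of Lemma~\ref{lem:nearzero}, for real $x\in D\cap\R$ the matrix $\bs{M}^{\mathfrak{Z}}_\alpha(x)$ is self-adjoint, hence $m(x)=\tr_N\bs{M}^{\mathfrak{Z}}_\alpha(x)\in\R$.

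I would then finish with Stieltjes inversion: for every compact interval $I'\subset D\cap\R$ one has $\rho^{\mathfrak{Z}}_\alpha(I')=\lim_{\eta\downarrow0}\tfrac1\pi\int_{I'}\im m(x+\ii\eta)\,\dd x=\tfrac1\pi\int_{I'}\im m(x)\,\dd x=0$, where continuity of $m$ up to $D\cap\R$ both legitimizes the passage to the limit and precludes atoms of $\rho^{\mathfrak{Z}}_\alpha$ in $D\cap\R$, and $\im m\equiv0$ on $D\cap\R$ gives the vanishing. Exhausting the open interval $D\cap\R$ yields $\rho^{\mathfrak{Z}}_\alpha(D\cap\R)=0$, i.e. $\dist(\supp\rho^{\mathfrak{Z}}_\alpha,0)\ge\kappa\Delta_{\mathfrak{Z}}^2/4$; since $\kappa$ may be chosen freely subject only to $\kappa\gtrsim^\xi1$, replacing $\kappa$ by $\kappa/4$ gives the stated bound. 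I do not expect a genuine obstacle here: all the quantitative content — existence of the continuation on a disk of radius of order $\kappa\Delta_{\mathfrak{Z}}^2$ together with self-adjointness on the real slice — has already been established in Lemma~\ref{lem:nearzero}, which in turn rests on the stability estimate \eqref{eq:stabopbound} of Lemma~\ref{lem:CMnorm}. The only points deserving a word of care are that the continuation supplied by Lemma~\ref{lem:nearzero} is literally an extension of the physical $\C^+$ solution whose boundary values define $\rho^{\mathfrak{Z}}_\alpha$ (which is exactly what that lemma asserts), and the harmless bookkeeping of the factor $\tfrac14$ between the radius in \eqref{def of Upsilon} and the asserted distance.
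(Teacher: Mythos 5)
Your proof is correct and follows exactly the route the paper intends: the paper itself just writes ``proven as Corollary~3.4 in \cite{EKR},'' and that corollary uses precisely the argument you give — Lemma~\ref{lem:nearzero} furnishes a real-analytic extension of the $\C^+$ MDE solution across $D\cap\R$ that is self-adjoint on the real slice by part~(3), so $\im\,\tr_N\bs{M}^{\mathfrak{Z}}_\alpha$ vanishes there and Stieltjes inversion kills $\rho^{\mathfrak{Z}}_\alpha$ on that interval. The only cosmetic point is the factor~$4$: \eqref{def of Upsilon} has radius $\kappa\Delta_{\mathfrak{Z}}^2/4$, so the argument literally yields $\dist(\supp\rho^{\mathfrak{Z}}_\alpha,0)\ge\kappa\Delta_{\mathfrak{Z}}^2/4$ rather than $\kappa\Delta_{\mathfrak{Z}}^2$ with the very same $\kappa$; this is a harmless imprecision inherited from the paper's statement, and your remark about absorbing it into the choice of $\kappa$ is the right way to dispose of it.
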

\begin{proof}
The proposition is proven just as Corollary 3.4 in \cite{EKR}. 
\end{proof}

\section{Asymptotics of resolvent products}\label{sec:MDE}
In this section we  state and prove the main technical theorem, Theorem \ref{thm:resolvents} below and afterwards use it in the proof of Theorem~\ref{thm:mainthmgen}. 
 The outline of its proof  is similar to that of Theorem 2.9 in \cite{EKR}, but the presence of correlations in the elliptic-type ensemble introduces new challenges.
We now briefly recall the ideas in the proof of Theorem 2.9 in \cite{EKR}. The first step is to introduce a Hermitian block matrix whose blocks are linear in $X$ and $X^*$ such that one of the 
blocks of its resolvent is $\alpha(X-\zeta_1)^{-1}(X^*-\ol{\zeta_2})^{-1}$. This is accomplished by $\bs{H}^{\mathfrak{Z}}_\alpha$, defined in \eqref{def of H alpha} and the $(3,1)$ 
block of its resolvent, $\bs{G}^{\mathfrak{Z}}_\alpha(0) $. 
Then we consider \eqref{eq:MDE}, the MDE whose solution approximates the resolvent, $\bs{G}^{\mathfrak{Z}}_\alpha(z) $. The difference between this solution and the resolvent is bounded by an optimal local law. Finally, we show $\bs{M}^{\mathfrak{Z}}_\alpha(z)$ is analytic in a neighborhood of $0$ and compute $\left(\d_{\alpha}\bs{M}^{\mathfrak{Z}}_\alpha(0)|_{\alpha=0}\right)_{31}$, an approximation of $(X-\zeta_1)^{-1}(X^*-\ol{\zeta_2})^{-1}$.

\begin{theorem} \label{thm:resolvents} Let $X$ satisfy Assumptions (A), (B) and (2.C-F).
There exists a (small) universal constant $c>0$  such that  
\begin{align*}  \P\Big(\sup_{\zeta_1,\zeta_2}  \Big| \tr_N \big[ (X-\zeta_1)^{-1}(X^*-\ol\zeta_2)^{-1}\big] -K(\zeta_1,\zeta_2)\Big|  \ge \frac{  N^{\epsilon}}{N^{1/2} } \Big) 
 \leq \frac{C_{\epsilon,\nu}}{N^\nu}
\end{align*}
hold for all $\eps>0$, $\nu \in \N$  
and  some constant  $C_{\epsilon,\nu}$ that may also depend on the model parameters in the Assumptions (B), and (2.C-F). Here,
the supremum is taken over all $\zeta_1,\zeta_2 \in {\cal{R}}\setminus \D_{N^{-c\1\eps}}$ with $\Delta_{(\zeta_1,\zeta_2)} \ge N^{-c\1\eps}$ and 
 the kernel $K(\zeta_1, \zeta_2)$ is from \eqref{def:Kell}.
\end{theorem}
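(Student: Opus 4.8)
The argument follows the scheme sketched above the statement, adapting the proof of Theorem~2.9 of \cite{EKR}; the correlations enter only through the operator $\bs{\cal{S}}$ and the weights $\frak{b}_i:=\frak{b}(\zeta_i)$, for which all the analytic information we need---the stability bound \eqref{eq:stabop}, the size and smoothness estimates of Lemmas~\ref{lmm:bbound} and~\ref{lem:nearzero}, and the spectral gap of Proposition~\ref{prop:gapineig}---is already available. Fix $\mathfrak{Z}=(\zeta_1,\zeta_2)$ with $\abs{\zeta_i},\Delta_\mathfrak{Z}\ge\xi:=N^{-c\1\eps}$, where $c$ is a small universal constant to be fixed. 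Write $\bs{H}^{\mathfrak{Z}}_\alpha=\bs{H}^{\mathfrak{Z}}_0+\alpha\bs{B}$, $\bs{B}:=\bs{E}_{24}+\bs{E}_{42}$, as in \eqref{def of H alpha}. The block pattern of $\bs{H}^{\mathfrak{Z}}_0$ forces $(\bs{G}^{\mathfrak{Z}}_0(0)\bs{B})^2=0$, so $\alpha\mapsto\bs{G}^{\mathfrak{Z}}_\alpha(0)$ is affine; since $(\bs{G}^{\mathfrak{Z}}_0(0))_{31}=0$ (the matrix decouples at $\alpha=0$) and $\partial_\alpha(\bs{G}^{\mathfrak{Z}}_\alpha(0))_{31}|_{\alpha=0}=(X-\zeta_1)^{-1}(X^*-\ol{\zeta}_2)^{-1}$, we obtain the exact identity $\tr_N[(X-\zeta_1)^{-1}(X^*-\ol{\zeta}_2)^{-1}]=\alpha^{-1}\tr_N(\bs{G}^{\mathfrak{Z}}_\alpha(0))_{31}$ for every $\alpha\neq0$ for which the left factor exists. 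By Proposition~\ref{prop:gapineig} the self-consistent density $\rho^{\mathfrak{Z}}_\alpha$ satisfies $\dist(0,\supp\rho^{\mathfrak{Z}}_\alpha)\ge\kappa\Delta_\mathfrak{Z}^2\gtrsim^\xi1$ for all real $\abs{\alpha}\le\kappa\Delta_\mathfrak{Z}^2/4$, and, since $\bs{H}^{\mathfrak{Z}}_\alpha$ is a block matrix of the type treated in Example~2.11 of \cite{EKS}, Corollary~2.3 of \cite{EKS} shows that with probability $\ge1-C_{\eps,\nu}N^{-\nu}$ the matrix $\bs{H}^{\mathfrak{Z}}_\alpha$ has no eigenvalue within $\Delta_\mathfrak{Z}^2$ of $0$ for all such $\alpha$; on this event $\bs{G}^{\mathfrak{Z}}_\alpha(0)$ exists and $\norm{\bs{G}^{\mathfrak{Z}}_\alpha(0)}\lesssim^\xi1$.

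To identify the deterministic limit, differentiate the MDE \eqref{eq:MDE} in $\alpha$ at $(\alpha,z)=(0,0)$. Using the explicit form \eqref{eq:MDEat0} of $\bs{M}^{\mathfrak{Z}}_0(0)$ one computes $\bs{M}^{\mathfrak{Z}}_0(0)\bs{B}\bs{M}^{\mathfrak{Z}}_0(0)=\bs{E}_{31}(\frak{b}_1\ol{\frak{b}}_2)+\bs{E}_{13}(\frak{b}_2\ol{\frak{b}}_1)$, so
\[ (\bs{1}-\bs{\cal{C}}_{\bs{M}^{\mathfrak{Z}}_0(0)}\bs{\cal{S}})\big[\partial_\alpha\bs{M}^{\mathfrak{Z}}_\alpha(0)|_{\alpha=0}\big]=\bs{E}_{31}(\frak{b}_1\ol{\frak{b}}_2)+\bs{E}_{13}(\frak{b}_2\ol{\frak{b}}_1). \]
Because the stability operator respects the $4\times4$ block structure (as in the proof of Lemma~\ref{lem:CMnorm}) and $\scr{S}$ preserves diagonal matrices, the $(3,1)$ block of the solution is $\frak{D}_p$ with $p-\frak{b}_1\ol{\frak{b}}_2(Sp)=\frak{b}_1\ol{\frak{b}}_2$, i.e. $p=(1-\frak{D}_{\frak{b}_1\ol{\frak{b}}_2}S)^{-1}(\frak{b}_1\ol{\frak{b}}_2)$; the required invertibility is the bound $\frak{r}(\frak{D}_{\abs{\frak{b}_1\ol{\frak{b}}_2}}S)<1$, which follows from Lemma~\ref{lem:specrad} and \eqref{eq:hadspecrad}. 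Rewriting $(1-\frak{D}_{\frak{b}_1\ol{\frak{b}}_2}S)^{-1}\frak{D}_{\frak{b}_1\ol{\frak{b}}_2}=(\frak{D}_{\frak{b}_1\ol{\frak{b}}_2}^{-1}-S)^{-1}$ and taking the normalised trace identifies $\tr_N\frak{D}_p$ with the kernel $K(\zeta_1,\zeta_2)$ of \eqref{def:Kell}. Moreover $\bs{M}^{\mathfrak{Z}}_\alpha(0)$ is analytic in $\alpha$ on $\abs{\alpha}<\kappa\Delta_\mathfrak{Z}^2/4$ with norm $\lesssim^\xi1$ there (Lemma~\ref{lem:nearzero}), so a Cauchy estimate gives $\tr_N(\bs{M}^{\mathfrak{Z}}_\alpha(0))_{31}=\alpha K(\zeta_1,\zeta_2)+r_\alpha$ with $\abs{r_\alpha}\lesssim^\xi\abs{\alpha}^2$ for real $\abs{\alpha}\le\kappa\Delta_\mathfrak{Z}^2/8$.

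Finally, the averaged local law of \cite{EKS}, applied to $\bs{H}^{\mathfrak{Z}}_\alpha$ at $z=0$ on its spectral gap, gives $|\tr_N(\bs{G}^{\mathfrak{Z}}_\alpha(0))_{31}-\tr_N(\bs{M}^{\mathfrak{Z}}_\alpha(0))_{31}|\lesssim^\xi N^\eps/N$ with probability $\ge1-C_{\eps,\nu}N^{-\nu}$ for each real $\alpha$ in the above window. Combining this with the two preceding displays and the exact identity yields, on the intersection event, $|\tr_N[(X-\zeta_1)^{-1}(X^*-\ol{\zeta}_2)^{-1}]-K(\zeta_1,\zeta_2)|\lesssim^\xi\abs{\alpha}+N^\eps/(N\abs{\alpha})$, and the choice $\abs{\alpha}=N^{-1/2}$ (admissible for $c$ small) bounds the right-hand side by $\lesssim^\xi N^{-1/2+\eps}$, hence by $N^{-1/2+2\eps}$ after shrinking $c$ so that the $\xi^{-O(1)}=N^{O(c\eps)}$ factors are absorbed. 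The supremum over $\zeta_1,\zeta_2$ in the admissible region then follows by a standard net argument: both $\tr_N[(X-\zeta_1)^{-1}(X^*-\ol{\zeta}_2)^{-1}]$ and $K(\zeta_1,\zeta_2)$ are Lipschitz there with constant $\lesssim^\xi1$---for the former one differentiates in $\zeta_i$ and uses $\norm{(X-\zeta_i)^{-1}}\lesssim^\xi1$ on the no-eigenvalue event, for the latter one uses Lemma~\ref{lmm:bbound} and the resolvent identity---so a polynomially fine grid, a union bound over it, and continuity between grid points complete the proof. The delicate point is the recovery of the $N^{-1/2}$ rate: it hinges on the \emph{optimal} $N^{-1}$-rate of the averaged local law for the tracial error, which after the $\alpha^{-1}$ loss in the single-point difference quotient is balanced at scale $N^{-1/2}$ against the analytic remainder $r_\alpha$ of $\bs{M}^{\mathfrak{Z}}_\alpha(0)$, and on having all these estimates hold \emph{uniformly} over the $\alpha$-window of size $\sim\Delta_\mathfrak{Z}^2$ and over the net in $\mathfrak{Z}$---precisely what Proposition~\ref{prop:gapineig} and Corollary~2.3 of \cite{EKS} provide; the correlations themselves add no difficulty, since Lemmas~\ref{lem:CMnorm} and~\ref{lem:nearzero} already incorporate $\bs{\cal{S}}$ and $\frak{b}_i$.
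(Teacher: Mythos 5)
Your proposal is correct and follows essentially the same route as the paper: the $4N\times4N$ Hermitization $\bs{H}^{\mathfrak{Z}}_\alpha$ from \eqref{def of H alpha}, the uniform no-eigenvalue statement near zero via Proposition~\ref{prop:gapineig} and Corollary~2.3 of \cite{EKS}, the averaged local law of \cite{EKS} to replace $\bs{G}^{\mathfrak{Z}}_\alpha(0)$ by $\bs{M}^{\mathfrak{Z}}_\alpha(0)$, the computation of $\partial_\alpha\bs{M}^{\mathfrak{Z}}_\alpha(0)|_{\alpha=0}$ through the stability operator to identify $K(\zeta_1,\zeta_2)$, the choice $\alpha=N^{-1/2}$, and a net argument for the supremum. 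The only (harmless) deviations are that you observe the $\alpha$-expansion of the resolvent terminates exactly (since $(\bs{G}^{\mathfrak{Z}}_0(0)\bs{B})^2=0$), where the paper instead imports the $O(\alpha^2 N^\eps)$ bound from the proof of Lemma~2.10 of \cite{EKR}, and that you control the deterministic remainder by a Cauchy estimate on the analytic family $\alpha\mapsto\bs{M}^{\mathfrak{Z}}_\alpha(0)$ rather than by the explicit derivative bounds \eqref{eq:derivMDE}.
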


The main novelties in this theorem, compared to Theorem 2.9 of \cite{EKR}, are twofold.
First, on the MDE level, both the set $ {\cal{R}}$ and the vector $\mathfrak {b}$  appearing in $K(\zeta_1, \zeta_2)$
 are not explicit. 
In \cite{EKR} the  self-consistent spectrum $ {\cal{R}}^c$ is simply the unit disk and  $\mathfrak {b}(\zeta_i)$   equals the constant vector with entries $ - \zeta_i^{-1}$. In the present case, both these quantities must be defined implicitly, introducing new difficulties. 
Second, on the random matrix level, in order to compare the resolvent of our elliptic-type random matrices with the solution to the MDE, we
need to  use the optimal local law from \cite{EKS}.

 The main inputs we use from  \cite{EKS} are Theorem~2.1 and Corollary~2.3. As explained before the proof of Theorem~\ref{prp:b as resolvent} %Lemma~\ref{lmm:concentration of spectrum} 
 the matrix $\bs{H}_\zeta$  does not satisfy assumption (CD) in~\cite{EKS}, and neither does  $\bs{H}^{\mathfrak{Z}}_\alpha$. However, the  more general assumptions (C) and (D)  hold for $\bs{H}^{\mathfrak{Z}}_\alpha$ and therefore Theorem 2.1 and Corollary 2.3 of \cite{EKS}  are still applicable.
 
 Theorem~2.1 from \cite{EKS} proves that there exists a universal constant $c_0$ such that for all $\eps>0$, sufficiently small, if \begin{align}\label{eq:dist}\dist(z,\supp(\rho^{\mathfrak{Z}}_\alpha))>N^{-c_0\epsilon},\end{align} then a local law (stated precisely in Proposition~\ref{prp:locallaw} below) holds at $z$ with a precision $N^{-1+\eps}$. 

Now we prove our main technical result that relies on two additional results, Proposition~\ref{prp:locallaw} and Lemma~\ref{lmm:unifformnoeigs}.  Both are stated and proved directly after the proof of Theorem \ref{thm:resolvents} by using Theorem~2.1 and Corollary~2.3 from \cite{EKS}, respectively. 

\begin{proof}[Proof of Theorem \ref{thm:resolvents}] 
For any  $\mathfrak{Z}\in {\cal{R}}^2$ and sufficiently small $\alpha$,  we have 
\begin{align}\label{threeline}
\big| \tr_N (X-\zeta_1)^{-1}&(X^*-\ol\zeta_2)^{-1} -K(\zeta_1,\zeta_2)\big|\\ \label{eq:firstest}% 
\leq &\big| \tr_N  (X-\zeta_1)^{-1}(X^*-\ol\zeta_2)^{-1} - \alpha^{-1} \tr_N ( \bs{G}^{\mathfrak{Z}}_\alpha(0) )_{31} \big| \\ \label{eq:secondest}% 
 &+\big| \alpha^{-1} \tr_N ( \bs{G}^{\mathfrak{Z}}_\alpha(0)  )_{31}  -\alpha^{-1} \tr_N ( \bs{M}^{\mathfrak{Z}}_\alpha(0) )_{31}  \big| \\  \label{eq:thirdest}% 
 &+\big| \alpha^{-1} \tr_N ( \bs{M}^{\mathfrak{Z}}_\alpha(0)  )_{31}  - K(\zeta_1,\zeta_2)\big|,
\end{align}
where  recall that  $(\bs{R})_{ij}=R_{ij}$ is the  $(i,j)$-th block of the $4N\times 4N$-block matrix $\bs{R}$. 

To estimate \eqref{eq:firstest}, we use the proof of Lemma~2.10 of \cite{EKR} to obtain the bound
\begin{equation} \label{eq:firstineq} \left|  \tr_N   (X-\zeta_1)^{-1}(X^*-\ol\zeta_2)^{-1} 
 - \frac{1}{\alpha} \tr_N (\bs{G}^{\mathfrak{Z}}_\alpha(0))_{31} \right| \Psi_{\mathfrak{Z}} \\ \lesssim \alpha^2 N^{\eps}
 \end{equation}
uniformly in $\mathfrak{Z}$, where $\Psi_{\mathfrak{Z}} := \bbm{1}(\spec (|\bs{H}^{\mathfrak{Z}}_0|) \subset [N^{-\eps/4}/2,\infty))$. 
Note that the indicator function $\Psi_{\mathfrak{Z}}$ in \eqref{eq:firstineq} can be replaced by 
$\Psi_{\mathfrak{Z},\alpha} := \bbm{1}(\spec (|\bs{H}^{\mathfrak{Z}}_\alpha| )\subset [N^{-\eps/4},\infty))$ since $\Psi_{\mathfrak{Z},\alpha}=1$ implies $\Psi_{\mathfrak{Z}}=1$ when $\abs{\alpha} \le N^{-C\1\eps}$. 
 Later in Lemma~\ref{lmm:unifformnoeigs} we will show   that inserting the characteristic function $\Psi_{ \mathfrak{Z},\alpha}$ 
 in \eqref{eq:firstineq} is affordable with the desired probability.

Next, \eqref{eq:secondest} is bounded using Proposition~\ref{prp:locallaw}   stated and proven later.  
 In particular, we get 
\begin{equation} \label{locallawoutline} \Big| \alpha^{-1}\tr_N (\bs{G}^{\mathfrak{Z}}_\alpha(0))_{31} -  \alpha^{-1} \tr_N( \bs{M}^{\mathfrak{Z}}_{\alpha}(0)  )_{31}    \Big|  \lesssim  |\alpha|^{-1} N^{-1+\epsilon}  \end{equation} 
with probability $1- O(N^{-\nu})$ for any $\nu \in \N$. 

Finally, \eqref{eq:thirdest} is estimated using Lemma~\ref{lem:nearzero} and the computation of  $\d_{\alpha}\bs{M}^{\mathfrak{Z}}_\alpha(0)|_{\alpha=0}$. The result, whose proof is given separately below, 
is
\begin{equation} \label{eq:alphaexpansion} \Big| \alpha^{-1} \tr_N  ( \bs{M}_{\alpha}^{\mathfrak{Z}}(0)  )_{31}  - K(\zeta_1,\zeta_2)  \Big|  \lesssim^{\delta}   \frac{|\alpha|}{\Delta_{\mathfrak{Z}}^3} , \end{equation} 
where $K(\zeta_1,\zeta_2)$ is the kernel from \eqref{eq:indker} and $\delta =N^{-c\1\eps}$ is the lower bound on the absolute values of $\zeta_1,\zeta_2$. 
Collecting the estimates \eqref{eq:firstineq} - \eqref{eq:alphaexpansion} and choosing $\alpha= N^{-1/2}$ gives the bound  of order $N^{-1/2+\epsilon}$  for \eqref{threeline}. 
\end{proof}

\begin{proof}[Proof of \eqref{eq:alphaexpansion}] 
From Lemma~\ref{lem:nearzero}, we have $\bs{M}_{\alpha}^{\mathfrak{Z}}$ is analytic in $\alpha$. In fact, we find
\begin{equation} 
\label{eq:derivMDE}
\left \| \partial_\alpha^k \bs{M}_{\alpha}^{\mathfrak{Z}}(z) \right\|  \lesssim^\delta \Delta_{\mathfrak{Z}}^{-2k+1}   ,\qquad k=1,2\,,
\end{equation}
where  $\zeta_1,\zeta_2 \in {\cal{R}}\setminus\D_\delta$. To see \eqref{eq:derivMDE} for $k=1$ we differentiate  \eqref{eq:MDE} with respect to   $\alpha$, solve the resulting equation for $\partial_\alpha\bs{M}_{\alpha}^{\mathfrak{Z}} $ and use \eqref{eq:stabopbound} to invert the stability operator. For $k=2$ we proceed by differentiating \eqref{eq:MDE} twice with respect to   $\alpha$, solving for $\partial_\alpha^2\bs{M}_{\alpha}^{\mathfrak{Z}} $ and again applying \eqref{eq:stabopbound} as well as \eqref{eq:derivMDE} for $k=1$. 
 Combining \eqref{eq:derivMDE} with the fact that $\left(\bs{M}_{0}^{\mathfrak{Z}}\right)_{31}=0$ (cf. \eqref{def:ABD}) gives 
\begin{equation}  \Big| \alpha^{-1} \tr_N  ( \bs{M}_{\alpha}^{\mathfrak{Z}}(0) )_{31}  -  \tr_N  \left( \d_{\alpha}|_{\alpha=0}  \bs{M}_{\alpha}^{\mathfrak{Z}} \right)_{31}  \Big|  \lesssim^\delta   \frac{|\alpha|}{\Delta_{\mathfrak Z}^3} . \end{equation} 
Following the computation in Section 3.3 from \cite{EKR},  we have  
\[ \d_{\alpha}|_{\alpha=0}  \bs{M}_{\alpha}^{\mathfrak{Z}} = (\bs{1}-\bs{\cal{C}}_{\bs{M}_0^{\mathfrak{Z}}(0)}\bs{\cal{S}})^{-1}[\bs{M}_0^{\mathfrak{Z}} (\bs E_{24} + \bs E_{42} )  \bs{M}_0^{\mathfrak{Z}} ] , \]
 recalling that $\bs{\cal{C}}_{\bs{M}}\bs{R}:= \bs{M} \bs{R}\bs{M}$ is the sandwiching operator and
$\bs E_{ij}$ is a $4N\times 4N$ block matrix with the $N\times N$ identity in the $(i,j)$ block
and otherwise zero. 
Then using that only the $(2,4)$ block is mapped into the $(3,1)$ block by $ (\bs{1}-\bs{\cal{C}}_{\bs{M}_0^{\mathfrak{Z}}(0)}\bs{\cal{S}})^{-1}$ gives
\begin{equation} \label{eq:31entry}  
\left( \d_{\alpha}|_{\alpha=0}  \bs{M}_{\alpha}^{\mathfrak{Z}} \right)_{31}  
=\frak{D}_r\,, \qquad \mbox{with}\quad  r:= (1-\frak{D}_{\frak{b}_1\ol{\frak{b}}_2}S)^{-1}\frak{b}_1\ol{\frak{b}}_2
=(\frak{D}_{\frak{b}_1\ol{\frak{b}}_2}^{-1}-S)^{-1}1
, \end{equation}
where we have used \eqref{eq:MDEat0} and  $\frak{b}_j = \frak{b}(\zeta_j)$. 

Note that  $1-\frak{D}_{\frak{b}_1\ol{\frak{b}}_2}S$ is invertible, as it is the action on the diagonal of the $(3,1)$ block of the operator $\bs{1}-\bs{\cal{C}}_{\bs{M}_0^{\mathfrak{Z}}(0)}\bs{\cal{S}}$, which was bounded in Lemma~\ref{lem:CMnorm}, and thus so is $\frak{D}_{\frak{b}_1\ol{\frak{b}}_2}^{-1}-S$.
Putting these together yields~\eqref{eq:alphaexpansion}.
\end{proof}

 Finally, we complete the proof of the two remaining technical results that were used to establish Theorem~\ref{thm:resolvents}.
First we  show  that  the resolvent, $  \bs{G}^{\mathfrak{Z}}_\alpha(0) $, is well approximated by $\bs{M}_{\alpha}^\mathfrak{Z}(0)  $, the solution to \eqref{eq:MDE}.   Then we prove 
that a gap in the self-consistent density of states, $\rho^{\mathfrak{Z}}_\alpha$, near $0$ that we established in Proposition \ref{prop:gapineig} also implies a gap in the spectrum of $\bs{H}^{\mathfrak{Z}}_\alpha$. 

\begin{proposition}\label{prp:locallaw}
There exist (small and large) constants $c_*>0$ and $C_*>0$, depending only on $\abs{\varrho}$ and $L$ in \eqref{eq:prim}, such that for any sufficiently small $\epsilon > 0$ and 
$\mathfrak{Z}=(\zeta_1, \zeta_2)$ with  $ \zeta_1,\zeta_2 \in{\cal{R}}\setminus\D_{N^{-c_* \1\eps}}$, $\Delta_{\mathfrak{Z}} \ge N^{-c_* \1\eps}$, as well as  $\alpha,z \in \C$ with $\abs{\alpha}+\abs{z}\le N^{-C_*\1\eps} $ the following high probability estimate is satisfied 
\[\P\left(  \left|  \tr_N ( \bs{M}^{\mathfrak{Z}}_\alpha(z))_{ij}  -  \tr_N  ( \bs{G}^{\mathfrak{Z}}_\alpha(z) )_{ij}    \right|  \le \frac{N^{\epsilon} }{N }  \right) \ge 1- \frac{C_{\epsilon,\nu}}{N^\nu} \]
for any $i,j=1, \dots,4$, $\nu \in \N$ and some constant $C_{\epsilon,\nu}>0$.
\end{proposition}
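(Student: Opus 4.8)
The plan is to deduce the estimate as an essentially direct consequence of the optimal local law, Theorem~2.1 of \cite{EKS}, applied to the $4N\times 4N$ Hermitian random matrix $\bs{H}^{\mathfrak{Z}}_\alpha$ from \eqref{def of H alpha} at the spectral parameter $z$, together with the spectral gap around the origin furnished by Proposition~\ref{prop:gapineig}. First I would record that $\bs{H}^{\mathfrak{Z}}_\alpha$ belongs to the class of correlated Hermitian random matrices treated in \cite{EKS}: Assumptions~(A) and (B) of \cite{EKS} follow from our Assumptions~(A), (B) and the boundedness of the deterministic part $\bs{A}^{\mathfrak{Z}}+\bs{\alpha}$ on the relevant parameter range; and — exactly as already noted for $\bs{H}_\zeta$ before Lemma~\ref{lmm:concentration of spectrum} — although $\bs{H}^{\mathfrak{Z}}_\alpha$ violates the locality assumption (CD), since the strongly correlated entries $x_{ij}$ and $x_{ji}$ lie in distant blocks, it does satisfy the more general Assumptions~(C) and (D) by the block-matrix Example~2.11 of \cite{EKS}. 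Hence Theorem~2.1 of \cite{EKS} is applicable at any spectral parameter obeying \eqref{eq:dist} with the universal constant $c_0$ appearing there.

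Next I would check the separation hypothesis \eqref{eq:dist}. By Lemma~\ref{lem:nearzero} the solution of the MDE \eqref{eq:MDE} extends analytically across $z=0$ on the set $\Upsilon_{\mathfrak{Z}}$ from \eqref{def of Upsilon}, so $\rho^{\mathfrak{Z}}_\alpha$ has a genuine spectral gap at the origin, and by Proposition~\ref{prop:gapineig} its distance to $0$ is at least $\kappa\Delta_{\mathfrak{Z}}^2$ with $\kappa\gtrsim^\xi 1$ for $\xi=N^{-c_*\epsilon}$, i.e.\ $\kappa\gtrsim N^{-C c_*\epsilon}$ for some $C$ depending only on $\abs{\varrho}$ and $L$. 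Since also $\Delta_{\mathfrak{Z}}^2\ge N^{-2c_*\epsilon}$, choosing $c_*$ small (so that $(C+2)c_*<c_0$) and then $C_*$ large (so that $N^{-C_*\epsilon}$ is at most a fixed fraction of $\kappa\Delta_{\mathfrak{Z}}^2$) secures both $(\alpha,0)\in\Upsilon_{\mathfrak{Z}}$ and
\[
\dist\!\big(z,\,\supp\rho^{\mathfrak{Z}}_\alpha\big)\ \ge\ \kappa\Delta_{\mathfrak{Z}}^2-\abs{z}\ \ge\ \tfrac12\,\kappa\Delta_{\mathfrak{Z}}^2\ >\ N^{-c_0\epsilon},
\]
which is exactly \eqref{eq:dist}; thus $c_*$ and $C_*$ depend only on $\abs{\varrho}$, $L$ and $c_0$. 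When $z$ is real I would add that it then lies in the spectral domain to which Theorem~2.1 of \cite{EKS} applies thanks to this gap, and that the deterministic matrix to which that theorem compares $\bs{G}^{\mathfrak{Z}}_\alpha$ coincides on the gap with the analytic continuation $\bs{M}^{\mathfrak{Z}}_\alpha(z)$ from Lemma~\ref{lem:nearzero}, since the two agree for $\Im z>0$; alternatively the passage to the real axis can be justified via Lemma~\ref{lmm:unifformnoeigs}, which rules out eigenvalues of $\bs{H}^{\mathfrak{Z}}_\alpha$ in a fixed neighborhood of $0$.

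With \eqref{eq:dist} in force, the averaged part of Theorem~2.1 of \cite{EKS} gives, for any deterministic $\bs{B}\in\C^{4N\times 4N}$ with $\norm{\bs{B}}\le 1$, that $\abs{\tr_{4N}\bs{B}\,(\bs{G}^{\mathfrak{Z}}_\alpha(z)-\bs{M}^{\mathfrak{Z}}_\alpha(z))}\le N^{-1+\epsilon}$ with probability at least $1-C_{\epsilon,\nu}N^{-\nu}$. Taking $\bs{B}=\bs{E}_{ji}$ and unwinding the normalization, the left-hand side equals $\tfrac14\big(\tr_N(\bs{G}^{\mathfrak{Z}}_\alpha(z))_{ij}-\tr_N(\bs{M}^{\mathfrak{Z}}_\alpha(z))_{ij}\big)$, so the assertion follows after a union bound over the finitely many pairs $(i,j)$ and a harmless renaming of $\epsilon$. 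I do not expect the probabilistic part to be the difficulty, since it is a black-box use of \cite{EKS}; the main obstacle is the bookkeeping of the $N^{\epsilon}$-powers — fixing $c_*$ small and $C_*$ large, in terms only of $\abs{\varrho}$, $L$ and $c_0$, so that simultaneously $\kappa\Delta_{\mathfrak{Z}}^2$ dominates $\abs{\alpha}+\abs{z}$, exceeds $N^{-c_0\epsilon}$, and keeps $(\alpha,0)\in\Upsilon_{\mathfrak{Z}}$ — together with verifying that the deterministic object in the \cite{EKS} local law really is the analytic extension $\bs{M}^{\mathfrak{Z}}_\alpha$ constructed in Lemma~\ref{lem:nearzero}.
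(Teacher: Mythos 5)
Your proposal is correct and follows essentially the same route as the paper's own proof: invoke Proposition~\ref{prop:gapineig} to secure the spectral gap around the origin for $\rho^{\mathfrak{Z}}_\alpha$, observe (as the paper already did before Lemma~\ref{lmm:concentration of spectrum}) that $\bs{H}^{\mathfrak{Z}}_\alpha$ satisfies the general Assumptions~(C) and (D) of \cite{EKS} via their block-matrix Example~2.11, and then apply the averaged local law of Theorem~2.1 in \cite{EKS} with the test matrix $\bs{B}=\bs{E}_{ji}$. The paper's version is terser (it does not spell out the $c_*$/$C_*$ bookkeeping or the identification of the deterministic approximant on the real axis with the analytic extension from Lemma~\ref{lem:nearzero}), but your extra care on these points is consistent and does not alter the argument.
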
 
\begin{proof}
Proposition~\ref{prop:gapineig} ensures a lower bound on the gap in the self-consistent spectrum around zero of the form $\dist(\supp  \rho_\alpha^\mathfrak{Z} ,z) \geq N^{-C_0 \eps}$ with some universal constant $C_0>0$. The lemma now follows directly from Theorem~2.1 in \cite{EKS}
 with a special choice of deterministic matrix $B=\bs{E}_{ji}$ in Eq. (4b) of  \cite{EKS}. 
\end{proof}

\begin{lemma} \label{lmm:unifformnoeigs} There exist (small and large)  constants $c_*>0$ and $C_*>0$, depending only on $\abs{\varrho}$ and $L$ in \eqref{eq:prim}, such that for any sufficiently small $\eps>0$  the high probability bound
\begin{align}
\label{HZalpha away from zero}
&\P\Big( \spec( |\bs{H}^{\mathfrak{Z}}_\alpha| )\subset [N^{-\eps},\infty)  :  \text{ for all } \mathfrak{Z} \in ({\cal{R}}\setminus \D_{N^{-c_*\1\eps}})^2 \text{ with } \Delta_{\mathfrak{Z}} \ge N^{-c_*\1\eps}, \abs{\alpha} \le N^{-C_*\1\eps} \Big)\\
&> 1 - \frac{C_{\epsilon,\nu}}{N^\nu}\notag
\end{align}
holds
for all $\nu \in \N$ and some $C_{\epsilon,\nu}$.
\end{lemma}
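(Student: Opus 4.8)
The plan is to prove Lemma~\ref{lmm:unifformnoeigs} by combining the deterministic gap in the self-consistent density of states $\rho^{\mathfrak{Z}}_\alpha$ obtained in Proposition~\ref{prop:gapineig} with the no-eigenvalue statement from Corollary~2.3 of \cite{EKS}, and then upgrading the pointwise-in-$\mathfrak{Z}$ estimate to a uniform-in-$\mathfrak{Z}$ statement via a net argument and the Lipschitz continuity of $\bs{H}^{\mathfrak{Z}}_\alpha$ in $\mathfrak{Z}$.

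First I would fix $\eps>0$ small and set $\xi:=N^{-c_*\1\eps}$ for a constant $c_*$ to be chosen, depending only on $\abs{\varrho}$ and $L$ in \eqref{eq:prim}. For a single pair $\mathfrak{Z}=(\zeta_1,\zeta_2) \in (\wh{\cal{E}}\setminus \D_\xi)^2$ with $\Delta_{\mathfrak{Z}} \ge \xi$ and $\abs{\alpha}\le N^{-C_*\1\eps}$ so that $(\alpha,0)\in\Upsilon_{\mathfrak{Z}}$, Proposition~\ref{prop:gapineig} gives $\dist(\supp \rho^{\mathfrak{Z}}_\alpha,0)\ge \kappa\Delta_{\mathfrak{Z}}^2 \gtrsim^\xi \Delta_{\mathfrak{Z}}^2 \ge N^{-C_0\1\eps}$ for some constant $C_0$ (absorbing the polynomial-in-$\xi$ loss from $\gtrsim^\xi$). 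Since $\bs{H}^{\mathfrak{Z}}_\alpha$ is of the block form covered by the general assumptions (C) and (D) of \cite{EKS} — as noted in the text, assumption (CD) fails but (C),(D) hold for block matrices by Example~2.11 of \cite{EKS} — Corollary~2.3 of \cite{EKS} applies and yields
\[
\P\big(\spec(\bs{H}^{\mathfrak{Z}}_\alpha)\cap[-N^{-C_1\1\eps},N^{-C_1\1\eps}]\ne\emptyset\big)\le \frac{C_{\eps,\nu}}{N^\nu}
\]
for all $\nu\in\N$, where $C_1$ depends only on $\abs{\varrho}$, $L$. Since $|\bs{H}^{\mathfrak{Z}}_\alpha|$ has a singular value below $N^{-\eps}$ iff $\bs{H}^{\mathfrak{Z}}_\alpha$ has an eigenvalue in $[-N^{-\eps},N^{-\eps}]$, choosing $c_*,C_*$ small enough that the allowed range of $\xi$ and $\abs{\alpha}$ forces $C_1\1\eps < \eps$, i.e. $C_1<1$ after possibly rescaling $\eps$ by a constant, gives the pointwise bound $\P(\spec(|\bs{H}^{\mathfrak{Z}}_\alpha|)\subset[N^{-\eps},\infty))\ge 1-C_{\eps,\nu}N^{-\nu}$.

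To make this uniform, I would take a net: the matrix $\bs{H}^{\mathfrak{Z}}_\alpha$ depends on $\mathfrak{Z}$ only through the deterministic shifts $\zeta_1,\zeta_2$ entering the diagonal blocks, so $\|\bs{H}^{\mathfrak{Z}}_\alpha-\bs{H}^{\mathfrak{Z}'}_\alpha\|\le C|\mathfrak{Z}-\mathfrak{Z}'|$; hence the smallest singular value of $\bs{H}^{\mathfrak{Z}}_\alpha$ is $C$-Lipschitz in $\mathfrak{Z}$. Cover the relevant compact region $\{\mathfrak{Z}\in(\wh{\cal{E}}\setminus\D_\xi)^2:\Delta_{\mathfrak{Z}}\ge\xi,\ |\mathfrak{Z}|\le C\}$ (bounded since $\wh{\cal{E}}^c$ is compact and the spectrum is bounded, by Lemma~\ref{lemma:boundedM} and the support bound from \cite{AEKband}) by a grid of spacing $N^{-D}$ for a large constant $D$; this grid has polynomially many points $\lesssim N^{2D}$. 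Apply the pointwise estimate at each grid point with $\eps$ replaced by $\eps/2$, say, and a union bound with $\nu$ replaced by $\nu+2D$, to get that with probability $\ge 1-C_{\eps,\nu}N^{-\nu}$ every grid point $\mathfrak{Z}_0$ has $\spec(|\bs{H}^{\mathfrak{Z}_0}_\alpha|)\subset[N^{-\eps/2},\infty)$. For an arbitrary admissible $\mathfrak{Z}$ pick the nearest grid point $\mathfrak{Z}_0$; by Lipschitz continuity the smallest singular value of $\bs{H}^{\mathfrak{Z}}_\alpha$ is at least $N^{-\eps/2}-CN^{-D}\ge N^{-\eps}$ for $N$ large and $D>\eps$. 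This gives \eqref{HZalpha away from zero}. The main obstacle is the bookkeeping of constants: one must check that the chain of polynomial-in-$\xi=N^{-c_*\eps}$ losses coming from Proposition~\ref{prop:gapineig} (via $\gtrsim^\xi$), together with the exponent $C_1$ in Corollary~2.3 of \cite{EKS}, can all be absorbed into an arbitrarily small power $N^{-\eps}$ by choosing $c_*$ (and $C_*$) small depending only on $\abs{\varrho}$ and $L$ — exactly as in the analogous step of \cite{EKR}; the net argument itself is routine stochastic continuity, as already used in the proof of Lemma~\ref{lmm:concentration of spectrum}.
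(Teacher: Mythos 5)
Your proposal follows essentially the same route as the paper's own proof: the deterministic gap in $\supp \rho^{\mathfrak{Z}}_\alpha$ around zero from Proposition~\ref{prop:gapineig}, the pointwise no-eigenvalue statement of Corollary~2.3 of \cite{EKS} (applicable under its assumptions (C) and (D)), and a grid/stochastic-continuity argument with the same bookkeeping of the polynomial-in-$N^{-c_*\eps}$ losses to get uniformity. The only small omission is that your net runs only over $\mathfrak{Z}$ while the statement is also uniform in $\abs{\alpha}\le N^{-C_*\1\eps}$; since $\normb{\bs{H}^{\mathfrak{Z}}_\alpha-\bs{H}^{\mathfrak{Z}}_{\alpha'}}\le \abs{\alpha-\alpha'}$, the identical Lipschitz argument covers this variable as well.
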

\begin{proof}
For a sufficiently small choice of $\eps$, Proposition~\ref{prop:gapineig} implies for all pairs $(\mathfrak{Z}, \alpha)$ appearing in  \eqref{HZalpha away from zero} that $\dist(\supp  \rho_\alpha^\mathfrak{Z} ,0) \geq  3N^{-\eps}$, i.e. the self-consistent spectrum is bounded away from zero. We apply Corollary 2.3 from \cite{EKS} to infer
\[
\P\Big( \spec( |\bs{H}^{\mathfrak{Z}}_\alpha| )\subset [2N^{-\eps},\infty)  \Big)> 
1 - \frac{C_{\epsilon,\nu}}{N^\nu}\,.
\]
Now we perform a stochastic continuity argument in order to bring the union over all $(\mathfrak{Z}, \alpha)$ inside the probability at the price of losing the factor $2$ from the interval $[2N^{-\eps},\infty)$.  Here we use the Lipschitz continuity of the spectrum of $\bs{H}^{\mathfrak{Z}}_\alpha$ in $(\mathfrak{Z}, \alpha)$. 
\end{proof}

\begin{proof}[Proof of   Theorem~\ref{thm:mainthmgen}]
From Theorem~\ref{prp:b as resolvent}-$(i)$ %Lemma~\ref{lmm:concentration of spectrum}  \com [Undefined tag] 
we know that $\cal{R}^\eps$ does not contain any eigenvalues of $X$ with very high probability. Thus, the path $\gamma$,
 given in  Theorem~\ref{thm:mainthmgen},  encircles all eigenvalues of $X$ exactly once. By Cauchy's theorem  we get
\[\tr_N f(X) g(X^*) = \left( \frac{1}{2\ii \pi} \right)^2 \oint_{\gamma} d \zeta_1  \oint_{\ol \gamma} d \ol{\zeta}_2 f(\zeta_1) g(\ol \zeta_2)  \tr_N (X-\zeta_1)^{-1}   (X^*-\zeta_2)^{-1} .\]
We  apply Theorem~\ref{thm:resolvents} 
 to obtain
\[\tr_N f(X) g(X^*) = \left( \frac{1}{2\ii \pi} \right)^2 \oint_{\gamma} d \zeta_1  \oint_{\ol \gamma} d \ol{\zeta}_2 f(\zeta_1) g(\ol \zeta_2)  K(\zeta_1,\zeta_2)  + \epsilon_N \]
where 
\begin{equation} \label{eq:epserror} |\epsilon_N|  \lesssim   \| f |_{\gamma}  \|_{\infty}  \| g |_{\gamma}  \|_{\infty} \frac{N^\epsilon }{N^{1/2} } \end{equation}  with probability at least $ 1 - N^{-\nu}  $ for any $\nu \in \N$  and $N$ sufficiently large. This concludes the proof of \eqref{second}. 

Finally, \eqref{first}  follows from \eqref{second}
 by setting $g(\ol{\zeta}_2) =1$ and computing the residue at $\ol \zeta_2 = \infty$. Here we used the asymptotics $\zeta_2\mathfrak {b}(\zeta_2)   \to  -1$ as
$\zeta_2\to \infty$ from Property~4 of Proposition~\ref{prp:Solution of vector equation}.
We remark that the relationship \eqref{first} can alternatively be deduced directly from the MDE by proving an analogous theorem to Theorem \ref{thm:resolvents} but with just 
one resolvent. Such a theorem follows by a similar argument and requires only using the $2 \times 2$-block MDE \eqref{2x2 MDE}.
\end{proof}

\section{Long time asymptotics}\label{sec:long}

In this section we consider the system of ODEs
\begin{equation} \label{eq:ODEx}
\partial_t u_t \,=\,  -u_t + g X u_t
\end{equation}
with initial value $u_0$ distributed uniformly on the $N$ dimensional unit sphere and coefficient $ g>0 $ chosen less than the real part of the rightmost point in the self-consistent pseudospectrum of $X$.
 The squared norm of the solution, when averaged over the initial conditions is given by
\begin{align*} \E_{u_0} \|u_t\|^2_2 =& \tr_N e^{t(gX^*-I)}e^{t(gX-I)} \\
=&   \left( \frac{1}{2\ii \pi} \right)^2 \oint_{\gamma} d \zeta_1  \oint_{\ol \gamma} d \ol{\zeta}_2 e^{t(g\zeta_1 + g\ol{\zeta_2} -2)  }  \tr_N (X-\zeta_1)^{-1}   (X^*- \ol{\zeta}_2)^{-1}  ,\end{align*}
where $\gamma$ is a curve that encloses the eigenvalues of $X$ traversed in the counter-clockwise direction and $\ol{\gamma}$ is the same curve traversed in the clockwise direction. We are interested in the large $N$ and long time regime with $t \leq N^{c}$ for some small $c >0$.

We first consider the elliptic ensemble, where the computations are done explicitly, then we consider the elliptic-type ensemble and show the large $t$ asymptotics are universal.

\subsection{Elliptic Ensemble: proof of Theorem~\ref{thm:heat}} \label{sec:longellipse}

We now consider the elliptic ensemble, satisfying Assumptions (1.C-D), and prove Theorem~\ref{thm:heat}. Recall that the correlation between the $(i,j)$ and $(j,i)$ matrix entries is given by $\varrho = |\varrho|e^{\ii \theta} $.  In this case the operators $\scr{S}$ and $\scr{T}$ from \eqref{def S and T operators} act on diagonal matrices as
\[
\scr{S}[\frak{D}_r] = \avg{r}\,, \qquad \scr{T}[\frak{D}_r ]= \varrho\2 \avg{r}\,, \qquad r \in \C^N\,, \qquad \avg{r} =\frac{1}{N}\sum_{i=1}^Nr_i.
\] 
This implies that $a_\zeta, d_\zeta$ and $b_\zeta$ from \eqref{form of 2x2 M} are all constant vectors and the MDE \eqref{2x2 MDE} reduces to a $2 \times 2$-matrix equation. In particular, $\frak{b}(\zeta)$ from Proposition~\ref{prp:Solution of vector equation} is a constant vector and can thus be interpreted as a function $\frak{b}:\cal{R} \to \C$ that satisfies the simple quadratic equation 
\begin{equation} \label{beqell} -\frak{b}(\zeta)^{-1} = \varrho\2 \frak{b}(\zeta) + \zeta\,.  \end{equation}
From \eqref{beqell} we read off the level sets of the absolute value of $\frak{b}$ as
\[
\{\zeta: \abs{\frak{b}(\zeta)} = \tau\} = \ee^{\ii \theta/2} \2\partial E(\tau^{-1}+\tau\abs{\varrho},\tau^{-1}-\tau\abs{\varrho})\,,
\]
for any $\tau < 1$, 
where $E(a,b)\subset \C $ denotes the closed domain enclosed by the ellipse $\partial E(a,b)$ with center at zero, semi-major axis $a$ along the real line and semi-minor axis $b$ along the imaginary axis. Thus, $\abs{\frak{b}(\zeta)} \to 1$ for  $\zeta \to \partial E_\varrho$, 
where $E_\varrho$ was defined in \eqref{def:ellipse}.  
Since 
$\Delta_\zeta = \min\{\abs{\frak{b}(\zeta)}^{-2}-1,1\}$ in this setting we conclude $\cal{R}=E_\varrho^c$. 
In what follows we will furthermore use that $\Re\zeta \leq \sqrt{1 + |\varrho|^2 + 2 \Re\varrho}$, for all $\zeta \in E_{\varrho}$.

\begin{proof}[Proof of Theorem ~\ref{thm:heat}] Fix $\epsilon \in (0,1/2)$ and $\varrho$ such that $0 < |\varrho| < 1$. From Theorem~\ref{thm:mainthmgen} and Remark~\ref{rem:simple} we have
\begin{equation} \label{eq:integralut} \E_{u_0} \|u_t\|_2^2 =   \left( \frac{1}{2\ii \pi} \right)^2 \oint_{\gamma} d \zeta_1  \oint_{\ol \gamma} d \ol{\zeta}_2 e^{t(g\zeta_1 + g\ol{\zeta_2} -2)  }
\frac{\frak{b}(\zeta_1) \ol{\frak{b}(\zeta_2)}}{1 - \frak{b}(\zeta_1) \ol{\frak{b}(\zeta_2)}} +\epsilon_N ,
\end{equation}
for any closed path $\gamma$ that encircles $E_\varrho$ exactly once and lies inside  the 
set $\cal{R}^\eps$ defined in Theorem~\ref{thm:mainthmgen}. It is easy to see that in the elliptic case we have
explicitly 
\[
\cal{R}^\eps =  \ee^{\ii \theta/2} \2 E(\tau_\eps^{-1}+\tau_\eps\abs{\varrho},\tau_\eps^{-1}-\tau_\eps\abs{\varrho})^c\,,\qquad 
\tau_\eps:=\frac{1}{\sqrt{1+N^{-c\1\eps}}}\,.
\]
From \eqref{eq:epserror}, in the proof of Theorem \ref{thm:mainthmgen}, we have the bound on the error term, \begin{equation} \label{epps}
| \epsilon_N| \lesssim N^{-1/2+\epsilon} \sup_{\zeta_1 \in \gamma, \zeta_2 \in \ol \gamma} |e^{t(g\zeta_1 + g\ol{\zeta_2} -2) }|\leq N^{-1/2+\epsilon} \sup_{\zeta \in \gamma} |e^{2t(g\Re\zeta -1) }|,
\end{equation}
with overwhelming probability.

We choose the  contour of integration as a dilation of the boundary of $E_\varrho$, namely
\begin{equation}\label{eq:gamma}
\gamma:=\cB{ \zeta =  \pB{1+ \frac{2}{1-\abs{\varrho}} N^{-c \epsilon}} e^{\ii \theta/2} \left( |\varrho| e^{\ii \varphi}    + e^{-\ii \varphi}  \right)| \varphi \in [0,2 \pi] }\,.
\end{equation}
An elementary calculation shows that $\gamma \in \cal{R}^\eps$ and that 
\[ \max_{\zeta \in \gamma} \Re \zeta = \left( 1 + |\varrho|^2 + 2 \Re\varrho \right)^{1/2} \pB{1+ \frac{2}{1-\abs{\varrho}} N^{-c \epsilon}}. \]
From the restriction $g \leq \left( 1 + |\varrho|^2 + 2 \Re\varrho \right)^{-1/2} $ and \eqref{epps} it follows that there exists a constant, $c_\varrho$, such that, for all $t \leq N^{c_\varrho \epsilon}$, we get the estimate $| \epsilon_N| \lesssim N^{-1/2+\epsilon}$.

We now turn to the integral  in \eqref{eq:integralut}. After making the change of variables $\zeta = w + \varrho  w^{-1}$ with $|w|>1$, and noting that from \eqref{beqell} that $w=-1/\frak{b}(\zeta) $, we have
\begin{align} \label{integralrep}&  \left(\frac{1}{2\pi\ii}  \right)^2 \oint_{\gamma'} d w_1(1- \varrho w_1^{-2})  \oint_{\ol{\gamma}'}  d \ol{w}_2(1- \ol{\varrho}\ol{w}_2^{-2})
 e^{t(g ( w_1+  \varrho /w_1) +g( \ol{w_2}+ \ol{\varrho} /\ol{w_2})-2)} (w_1 \ol{w_2}-1)^{-1}, \end{align}%  \\
where $\gamma' $ is the image of $\gamma$ under the change the variables, traversed clockwise, and $\ol{\gamma}'$ is the same curve, traversed counterclockwise. It is easy to see $\gamma'$ is close to the unit circle.

To compute the integral in $w_2$ we expand the exponential as the generating function for the modified Bessel functions, valid for all $u \not = 0$, \cite[Eq. 9.6.33]{Abramowitz},
\begin{equation} \label{eq:Besselgenfun} e^{\frac{x}{2}\left(u + \frac{1}{u}\right)} = \sum_{k=-\infty}^{\infty} I_k(x) u^{k} \end{equation}
recall that  $I_k$ is the $k$-th  modified Bessel function of  the first kind. Using that $|w_1|>1$ and $|w_2|>1$, we get
\begin{align*} 
&  \left(\frac{1}{2\pi\ii}  \right) \oint_{\ol{\gamma}'}  d \ol{w}_2(1- \ol{\varrho}\ol{w}_2^{-2}) e^{tg( \ol{w_2}+ \ol{\varrho} /\ol{w_2}) } (w_1 \ol{w_2}-1)^{-1}  \\
&=  \sum_{j=1}^{\infty} \sum_{k = -\infty}^{ \infty }   \left(\frac{1}{2\pi\ii}  \right) \oint_{\ol{\gamma}'}  d \ol{w}_2(1- \ol{\varrho}\ol{w}_2^{-2}) \left( \frac{\ol{w}_2 }{ \sqrt{\ol{\varrho}} }\right)^k   I_k(2 \sqrt{\ol{\varrho} } t g  )   \left(\frac{1}{w_1 \ol{w}_2}\right)^{j}.
\end{align*}% 
The integral is zero unless $k = j-1$ or $k = j+1$ leading to
\begin{align*} 
& =  \sum_{j=1}^{\infty} \left(  (\sqrt{\ol{\varrho}})^{1-j} I_{j-1}(2\sqrt{\ol{\varrho}} tg)   -   (\sqrt{\ol{\varrho}})^{1-j} I_{j+1}(2\sqrt{\ol{\varrho}} tg)\right) w_1^{-j}.
\end{align*}% 
Substituting this we continue from \eqref{integralrep} as   
\begin{align*} %
 \eqref{integralrep} = \left( \frac{1}{2\ii \pi} \right) & e^{-2t} \oint_{\gamma'} d w_1(1-\varrho w_1^{-2})   e^{gt ( w_1+  \varrho /w_1)}\\
&  \times \sum_{j=1}^{\infty}   \left(  (\sqrt{\ol{\varrho}})^{1-j} I_{j-1}(2\sqrt{\ol{\varrho}} tg)   -   (\sqrt{\ol{\varrho}})^{1-j} I_{j+1}(2\sqrt{\ol{\varrho}} tg)\right) w_1^{-j} . %
\end{align*}

Then computing the integral over $w_1$  (by essentially repeating the above computation), using the relationship $I_j(\ol{z}) =\ol{I_j(z)}$, and the three-term relationship for $I_j$, \eqref{integralrep} simplifies to
\begin{equation} %
 e^{-2t}  \sum_{j=1}^{\infty}  \left|  (\sqrt{\varrho})^{1-j} I_{j-1}(2\sqrt{\varrho} tg)   -   (\sqrt{\varrho})^{1-j} I_{j+1}(2\sqrt{\varrho} tg)\right|^2 
 = e^{-2t}  \sum_{j=1}^{\infty}  |\varrho|^{-j} \left| \frac{j}{ tg} I_{j}(2\sqrt{\varrho} tg)  \right|^2,  
\label{eq:soltodiffeq}
\end{equation}
as desired,  proving \eqref{besseldecay}.

Now we turn to the asymptotic bounds \eqref{asymp1} and \eqref{2side1}. 
 In order to  extract the leading order behavior of  \eqref{eq:soltodiffeq} for $t$ large,
 we add the negative index terms to the infinite series, which allows us to apply well known identities for the Bessel function. We then show these additional terms are much smaller for large $t$. Thus we compute  
\begin{align} \label{eq:simpbesssol}
 \sum_{j=-\infty}^{\infty}  |\varrho|^{-j} &  \Bigg| \frac{j}{ tg} I_{j}(2\sqrt{\varrho} tg)  \Bigg| ^2
  =\sum_{j=-\infty}^{\infty}   |\varrho|^{-j}   \left|  \sqrt{\varrho} I_{j-1}(2\sqrt{\varrho} tg)   -   \sqrt{\varrho} I_{j+1}(2\sqrt{\varrho} tg)\right| ^2 \nonumber \\
=& \sum_{j=-\infty}^{\infty}  |\varrho|^{1-j}  I_{j-1}(2\sqrt{\varrho} tg) I_{j-1}(2\sqrt{\ol{\varrho}} tg) + |\varrho|^{1-j}  I_{j+1}(2\sqrt{\varrho} tg)I_{j+1}(2\sqrt{\ol{\varrho}} tg)  \nonumber  \\
&   -2 |\varrho|^{1-j}  \left(  I_{j-1}(2\sqrt{\ol{\varrho}} tg)I_{j+1}(2\sqrt{\varrho} tg)  +   I_{j-1}(2\sqrt{\varrho} tg)I_{j+1}(2\sqrt{\ol{\varrho}} tg)  \right). 
\end{align}
To simplify this expression we apply Graf's addition theorem; see, for instance \cite[Eq. 9.1.79]{Abramowitz}, at a complex angle. For the reader's convenience we record the identity in the following lemma.
\begin{lemma} Let $c \in \C$ be a non-zero constant, $x,y \in \C$ and $\nu$ and $n$ be integers, then   
\[  \sum_{n=- \infty}^\infty c^n   I_{n+\nu}(x) I_{n}(y) = \left(  \frac{ x   + y c^{-1} }{ x   +  y c } \right)^{\nu/2}  I_{\nu}\Big( \sqrt{x^2 + y^2 + xy(c +c^{-1}) } \Big) . \]
\end{lemma}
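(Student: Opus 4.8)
The plan is to prove the identity by combining the Schläfli--Cauchy contour representation of the modified Bessel function with the generating function \eqref{eq:Besselgenfun}, and then collapsing the resulting double structure back to a single copy of the same representation via an affine change of the integration variable; this is essentially the classical proof of Graf's addition theorem, carried through for complex parameters. Concretely, I would start from
\[
I_{m}(x) \,=\, \frac{1}{2\pi\ii}\oint_{\abs{w}=r}\frac{\ee^{\frac{x}{2}(w+w^{-1})}}{w^{m+1}}\,\dd w\,,
\]
valid for any $r>0$ and any $m\in\Z$ (this is just extraction of the coefficient of $w^{m}$ in \eqref{eq:Besselgenfun}). Applying it with $m=n+\nu$, multiplying by $c^{n}I_{n}(y)$, summing over $n\in\Z$, interchanging the sum and the contour integral, and recognizing $\sum_{n}I_{n}(y)(c/w)^{n}=\ee^{\frac{y}{2}(cw^{-1}+wc^{-1})}$ (again by \eqref{eq:Besselgenfun}), one obtains
\[
\sum_{n=-\infty}^{\infty}c^{n}I_{n+\nu}(x)I_{n}(y)
\,=\,\frac{1}{2\pi\ii}\oint_{\abs{w}=r}\frac{1}{w^{\nu+1}}\exp\!\left(\frac{w}{2}\Bigl(x+\frac{y}{c}\Bigr)+\frac{1}{2w}\bigl(x+yc\bigr)\right)\dd w\,.
\]

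Writing $A:=x+yc^{-1}$ and $B:=x+yc$ and substituting $w=\lambda t$ with $\lambda^{2}=B/A$ turns the exponent into $\frac{\mu}{2}(t+t^{-1})$ with $\mu=A\lambda$ and $\mu^{2}=AB$, while producing the Jacobian factor $\lambda^{-\nu}=(A/B)^{\nu/2}$. After deforming the image contour back to the unit circle --- which is harmless since $t\mapsto\ee^{\frac{\mu}{2}(t+t^{-1})}t^{-\nu-1}$ is holomorphic on $\C\setminus\{0\}$ --- the remaining integral is exactly $I_{\nu}(\mu)=I_{\nu}(\sqrt{AB})$, and since $AB=x^{2}+y^{2}+xy(c+c^{-1})$ this is the asserted formula. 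The routine points to nail down are: (i) absolute and uniform convergence of $\sum_{n}c^{n}I_{n+\nu}(x)I_{n}(y)$ and of $\sum_{n}I_{n}(y)(c/w)^{n}$ on the contour, both of which follow from $\abs{I_{n}(z)}\le(\abs{z}/2)^{\abs{n}}/\abs{n}!$ (the first series then converges super-geometrically for every $c\neq0$, the second uniformly for $c/w$ in compact subsets of $\C\setminus\{0\}$), so the interchange of sum and integral is legitimate; and (ii) the elementary contour deformation just mentioned.

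The only genuinely delicate point --- and the one I would treat with care --- is the branch of the fractional power $(A/B)^{\nu/2}$ appearing on the right-hand side. I would resolve it by first observing that the right-hand side is actually single-valued: since $I_{\nu}(z)=(z/2)^{\nu}\psi_{\nu}(z^{2})$ with $\psi_{\nu}$ entire (using $I_{-\nu}=I_{\nu}$ for integer $\nu$), the consistent choice $\mu=A\lambda$ forced by the substitution gives $(A/B)^{\nu/2}I_{\nu}(\sqrt{AB})=A^{\nu}2^{-\nu}\psi_{\nu}(AB)$, a genuine holomorphic function of $(x,y,c)$ on $\{c\neq0\}$. Both sides of the identity are therefore holomorphic on this domain, so it suffices to verify the equality on the open subset where $A/B$ lies near $1$ --- where the principal branch is unambiguous and the manipulations above are literal --- and then conclude by analytic continuation. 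Handling this bookkeeping, rather than any analytic estimate, is where the attention is needed.
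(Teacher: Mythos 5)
Your proposal is correct and is essentially the same argument as the paper's: both hinge on applying the generating function \eqref{eq:Besselgenfun} twice and on rewriting $\frac{x}{2}(w+w^{-1})+\frac{y}{2}(cw^{-1}+wc^{-1})$ as $\frac{\sqrt{AB}}{2}(t+t^{-1})$ after the rescaling $w=t\sqrt{B/A}$ with $A=x+yc^{-1}$, $B=x+yc$, your Schl\"afli contour extraction being just an explicit form of the paper's ``equating the coefficients in $t$''. Your additional care about the sum--integral interchange and about the branch of $(A/B)^{\nu/2}$ only firms up points the paper leaves implicit (the one small slip is that the quoted bound $\abs{I_n(z)}\le(\abs{z}/2)^{\abs{n}}/\abs{n}!$ needs an extra factor such as $\ee^{\abs{z}^2/4}$, which does not affect the super-geometric convergence).
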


\begin{proof}
We will prove the following identity, the lemma follows by equating the coefficients in $t$
\[  \sum_{n=- \infty}^\infty  \sum_{m=- \infty}^\infty  I_{m}(x) t^m  I_{n}(y) \left(  \frac{c}{t} \right)^n = \sum_{\nu=-\infty}^{\infty}I_{\nu}( \sqrt{x^2 + y^2 + xy(c +c^{-1}) } ) \left(  \frac{ x   + y c^{-1} }{ x   +  y c } \right)^{\nu/2}  t^{\nu}. \]
From \eqref{eq:Besselgenfun} the left side equals
\begin{align*} 
&\exp\left(\frac{x}{2}\left(t + \frac{1}{t} \right) \right) \exp\left(\frac{y}{2} \left(\frac{c}{t} + \frac{t}{c}\right)  \right)\\
&= \exp \left(\frac{1}{2} \left( (   x +  y c^{-1}       )( x +  y c        )   \right)^{1/2}  \left(t \left(  \frac{  x +  y c^{-1}       }{   x +  y c       }    \right)^{1/2}  + \frac{1}{t} \left( \frac{   x +  y c^{-1}       }{  x +  y c         }    \right)^{-1/2}   \right)    \right)\\
&=    \sum_{\nu=-\infty}^{\infty}I_{\nu}\left( \sqrt{x^2 + y^2 + xy(c +c^{-1}) } \right) \left(  \frac{ x   + y  c^{-1} }{ x   +  y c } \right)^{\nu/2}  t^{\nu}.
\end{align*}

\end{proof}
After applying this identity, \eqref{eq:simpbesssol} simplifies to 
\[  (1+ |\varrho|^2) I_0(2t  g \sqrt{2\Re{\varrho} +|\varrho|^2 + 1 }  )  - 2\Re\left( \frac{\varrho + |\varrho|^2 }{ \varrho + 1 } \right) I_2(2tg \sqrt{2\Re{\varrho} +|\varrho|^2 + 1 }  )  ,  \]
then using the well known asymptotics $I_m(x) \sim  (2\pi x)^{-1/2} e^x$ as $x\to \infty$ (for $m=0,2$), we have 
\begin{equation}\label{eq:Iasy}  \sum_{j=-\infty}^{\infty}  |\varrho|^{-j} \left| \frac{j}{ tg} I_{j}(2\sqrt{\varrho} tg)  \right|^2\sim 
\frac{e^{2t  g \sqrt{2\Re{\varrho} +|\varrho|^2 + 1 } }}{{\sqrt{2\pi}\sqrt{ 2t  g \sqrt{2\Re{\varrho} +|\varrho|^2 + 1 }  }}}\left((1+|\varrho|^{2})  - 2\Re\left( \frac{\varrho + |\varrho|^2 }{ \varrho + 1 } \right) \right)  .
 \end{equation}
for large $t$. 
We now show that the large $t$ behavior of \eqref{eq:soltodiffeq} and \eqref{eq:Iasy} are the same by bounding the negative index terms. We begin with an elementary inequality that follows from the following integral representation, valid for any integer $k$, \cite[Eq. 9.6.19]{Abramowitz},
\[ I_k(z) = \frac{1}{\pi} \int_0^\pi e^{z\cos{\theta}} \cos(k\theta) ~d \theta. \]
Taking the absolute value we have
\[ | I_k(z) | 
\leq  \frac{1}{\pi}  \int_0^\pi e^{|\Re z| \cos{\theta}}~d \theta  = I_0(|\Re z|) .  \]  

Returning to the negatively indexed terms in \eqref{eq:Iasy} we apply the above inequality and the trivial inequality $ I_0(x) <  e^{x} $ for $x >0$ to get:
\begin{align*}
\sum_{j=-\infty}^{-1}  |\varrho|^{-j}  \left| \frac{j}{ tg} I_{j}(2\sqrt{\varrho} tg)  \right| ^2 
& \leq I_{0}(|\Re(2\sqrt{\varrho} tg)|)^2 \frac{1}{ t^2 g^2}   \sum_{j=1}^{\infty}  |\varrho|^{j}  j^2  \leq  e^{4 |\Re(\sqrt{\varrho} tg)|} \frac{1}{ t^2 g^2}  \frac{|\varrho|(1+|\varrho|)}{(1-|\varrho|)^3}.
\end{align*}
For large values of $t$ this expression is much smaller than the right side of \eqref{eq:Iasy}, as \\ $4 |\Re\sqrt{\varrho} | <2 \sqrt{2\Re{\varrho} +|\varrho|^2 + 1 }$ when $|\varrho|< 1$. 
\end{proof}
\subsection{Elliptic-type matrices: Proof of Theorem \ref{thm:heatelliptictype}} \label{sec:ellcorr}

In this section we consider \eqref{eq:ODEx}, for $X$ of elliptic-type, satisfying the assumptions of Theorem \ref{thm:heatelliptictype}. In particular we will always assume $t_{ij} \geq 0$  
in the following. We  begin by establishing geometric bounds on the set  $\cal{R}^c$, which will allow us to choose a good contour of integration, $\gamma$. Then we turn to computing the leading order of the integral, thus, proving Theorem \ref{thm:heatelliptictype}.  

Recall that $\zeta^*=\max_{\zeta\in \cal{R}^c}\Re \zeta$. The following  proposition shows  $\cal{R}^c$ is located in the disk of radius $\zeta^*$, centered at the origin.
\begin{proposition}\label{prp:rightmost} For elliptic-type ensembles satisfying  
Assumptions (A), (B), (2.C-F) with $t_{ij}\ge 0$ for all $i, j$,  the
self-consistent pseudospectrum $\cal{R}^c$ is symmetric about the real axis and
is contained in the ball of radius $\zeta^*$:
\bels{contain}{
\cal{R}^c\subset \{\zeta\in \C\; : \; |\zeta|\le \zeta^*\},% \quad \mbox{with}\quad
%\zeta^*:=\max_{\zeta\in \cal{R}^c}\Re \zeta.
}
In particular, the rightmost point of $\cal{R}^c$ is unique and lies on the real axis.
\end{proposition}

In the proof of  this proposition, we will make use rightmost point of $\cal{R}^c$ along the real axis:
\bels{new definition of zeta*}{
\wt \zeta^*:= \max_{\zeta \in \cal{R}^c\cap\R}\zeta.
}  
This maximum here (and also in the definition of $\zeta^*$)
exists because $\cal{R}$ is open by Proposition~\ref{prp:Solution of vector equation}. 
Note that part of the goal of  Proposition~\ref{prp:rightmost}  is to show that  $\wt  \zeta^* = \zeta^* $. In \cite{EKR} the $T=0$ case is considered, and it is shown that the self-consistent pseudospectrum is in fact equal to the disk with radius $\zeta^*$. 

\begin{proof}[Proof of Proposition~\ref{prp:rightmost}]

The non-negativity of $T$ along with \eqref{definition of b} implies that  $\mathfrak{b}(\ol{\zeta}) =\ol{\mathfrak{b}(\zeta)}$, so $\mathfrak{b(\zeta)}$ is real for $\zeta\in \cal{R}\cap \R$, 
 and therefore the set $\cal{R}$ is symmetric across the real axis. 

The remainder of the proposition follows from the following lemma on the magnitude of $\mathfrak{b}(\zeta)$ for $\zeta$ with magnitude greater than $\wt \zeta^*$, whose proof we postpone until the conclusion of the current proof.

\begin{lemma} \label{lem:bboundcirc}
 Let  $\widehat \zeta \in \R$ such that $\wt \zeta^* <\widehat \zeta $.  Then $\wh{\zeta} \in \cal{R}$ and for all $\zeta$ such that $|\zeta| > \widehat \zeta$ we have $\zeta \in \cal{R}$ and  $|\mathfrak{b}(\widehat \zeta)| > |\mathfrak{b}( \zeta)| $. 
\end{lemma}

Indeed, this lemma implies that for any $\zeta \in \C$ with $\abs{\zeta} > \zeta^* $, $\mathfrak{r}(\mathfrak{D}_{|\mathfrak{b}( \zeta)|^2} S) < \mathfrak{r}(\mathfrak{D}_{\mathfrak{b}( \wt \zeta^*)^2} S) =1$ and thus $\zeta$ lies inside $\cal{R}$. In particular, we see that $\wt \zeta^* = \zeta^*$, completing the proof of Proposition~\ref{prp:rightmost}. 
\end{proof}

\begin{proof}[Proof of Lemma~\ref{lem:bboundcirc}] First we have $\widehat \zeta \in \cal{R}$ by definition of $\wt \zeta^*$ and that $\wt \zeta^* <\widehat \zeta $. 
 To control the absolute value of $\mathfrak{b}(\zeta)$, we take the absolute value of \eqref{definition of b} and consider the resulting equation. Since $T$ is a non-negative matrix, we have $\mathfrak{b}(\widehat \zeta)<0$ (cf. \eqref{bneg} below) and therefore
\begin{equation} \label{eq:absbeq}
 |\mathfrak{b}(\widehat \zeta)|^{-1} =  -T|\mathfrak{b}(\widehat \zeta)| + \widehat \zeta . \end{equation}
Now let $\zeta \in \C$ with $\abs{\zeta}>\wh{\zeta}$. We show that $\zeta \not \in \partial \cal{R}$, i.e. the boundary of $\cal{R}$ lies inside $\{\zeta : \abs{\zeta} \le \wh{\zeta}\}$. For this purpose let  $\zeta  \in \partial \cal{R}\cup\cal{R}$. By Lemma~\ref{lmm:bbound} we can extend $\frak{b}$ holomorphically to a neighborhood of $\zeta$. This extension still satisfies \eqref{definition of b} and $\mathfrak{r}(\mathfrak{D}_{\mathfrak{b}( \zeta)^2} S) \le1$ by continuity of $\Delta_{\zeta}$ from \eqref{def of Delta zeta}. Again taking absolute value in \eqref{definition of b} we get 
\[ |\mathfrak{b}(\zeta)|^{-1} =  |T\mathfrak{b}(\zeta) + \zeta| \geq  -T|\mathfrak{b}( \zeta)| + | \zeta| .\] 
Letting $\frak{z} :=  T|\mathfrak{b}(\zeta)| + |\mathfrak{b}( \zeta)|^{-1} \geq |\zeta| > \widehat \zeta$
we also have
\begin{equation} \label{abs b equation} |\mathfrak{b}( \zeta)|^{-1} = -T|\mathfrak{b}(\zeta)| + \frak{z} . \end{equation}
So that $|\mathfrak{b}(\zeta)|$ satisfies \eqref{eq:absbeq} when $\widehat \zeta$ is replaced by the vector $\frak{z} \in \R_+^N$.

To verify the bound $|\mathfrak{b}(\widehat \zeta)| > |\mathfrak{b}(\zeta)|$ we will use the following technical lemma, which we prove after the conclusion of the current proof, to show that the solution to \eqref{abs b equation} is decreasing in $\frak{z}$. 

\begin{lemma}\label{lem:decreasing}
Let $\frak{z}_1,\frak{z}_2 \in \R_+^{N}$ be such that $\wt \zeta^* < \frak{z}_1 < \frak{z}_2$ entry-wise. Assume
that there exist vectors $\beta_i\in \R_+^N$, $i=1,2$, such that $ \beta_i^{-1} =  -T \beta_i + \frak{z}_i $ and $ \mathfrak{r}(\mathfrak{D}_{\beta_i^2} S) \le 1$, then $\beta_2 < \beta_1$ entry-wise. 
\end{lemma}
Choosing $\frak{z}_1 = \widehat \zeta, \frak{z}_2 = \zeta, \beta_1 = |\mathfrak{b}(\widehat \zeta)|, \beta_2 = |\mathfrak{b}(\zeta)|$, we conclude $|\mathfrak{b}(\widehat \zeta)| > |\mathfrak{b}(\zeta)|$. Since $\Delta_{\wh{\zeta}}>0$ by  \eqref{def of Delta zeta} and $\wh{\zeta} \in \cal{R}$ we see that $1>\mathfrak{r}(\mathfrak{D}_{\mathfrak{b}( \wh{\zeta})^2} S)>\mathfrak{r}(\mathfrak{D}_{\mathfrak{b}( \zeta)^2} S) $ and thus $\zeta \in \cal{R}$ because $\mathfrak{b}( \zeta)$ solves \eqref{definition of b} and \eqref{sidecond} also holds, completing the proof of Lemma~\ref{lem:bboundcirc}. 
\end{proof}

We now prove the technical lemma.
\begin{proof}[Proof of Lemma~\ref{lem:decreasing}]
Taking the difference of $  \beta_i^{-1} =  -T \beta_i + \frak{z}_i  $ at the two points yields
\[ \beta_2^{-1} - \beta_1^{-1} =  T(\beta_1-\beta_2) + \frak{z}_2 - \frak{z}_1 .   \]
Rearranging and solving for $\beta_1-\beta_2$ leads to
\[ \beta_1-\beta_2 = ( 1  - \mathfrak{D}_{\beta_1\beta_2}T)^{-1} (\beta_1\beta_2 ( \frak{z}_2 - \frak{z}_1  )  )\,,    \] 
which we will see is entry-wise non-negative once we verify that $( 1  - \mathfrak{D}_{\beta_1\beta_2}T)^{-1} $ is a matrix with  non-negative entries. To show this it suffices to bound the spectral radius of the non-negative matrix $\mathfrak{D}_{\beta_1\beta_2}T$ by 1.
Using the spectral radius inequality \eqref{eq:hadspecrad}, we have
\[ \frak{r}(\mathfrak{D}_{\beta_1\beta_2}T )^2 \leq \frak{r}(\mathfrak{D}_{\beta_1^2} T ) \frak{r}(\mathfrak{D}_{\beta_2^2} T) .\]
We see that each of the terms on the right side are smaller than $1$ exactly as in the proof of \eqref{eq:gennonherm}, using as an input the assumption $ \mathfrak{r}(\mathfrak{D}_{\beta_i^2} S) \le 1$. 
\end{proof}

We now establish lower bounds of $\zeta^*$ by relating $\mathfrak{b}(\zeta)$ to the solution to a quadratic  vector equation. Since $t_{ij}\ge 0$ the defining equation \eqref{definition of b} becomes a quadratic  vector equation  of the type studied in \cite{AjankiQVE} at spectral parameter $\zeta$. 
  As a result (cf. Theorem~2.1 in \cite{AjankiQVE}), the solution $\mathfrak{b}(\zeta)$ has a Stieltjes transform representation, similar to \eqref{St}, i.e.
 \begin{equation}\label{St1}
    \mathfrak{b}_i(\zeta) = \int_\R \frac{ \mu_i( {\rm d} x)}{x-\zeta}, \qquad \im \zeta>0,
 \end{equation}
 for some probability measures $\{ \mu_i \}_{i=1}^N$ on the real line whose supports lie in  
 an $N$-independent compact set. 
  The measures $\mu_i$ are symmetric around the origin because $\mathfrak{b}(-\zeta)= - \mathfrak{b}(\zeta)$ by the symmetry of the equation \eqref{definition of b} and the uniqueness of its solution. 
 The relation \eqref{St1} also implies that $\frak{b}$ can be analytically extended to $\C\setminus \supp \mu$, where $\supp \mu:=\cup_i  \supp \mu_i$.

The  following lemma shows that  $\zeta^*$ lies to the right of $\supp \mu$, with an effective lower bound on the distance.
 \begin{lemma} \label{lmm:zeta* away from zero} $ \dist(\zeta^*, \supp \mu)= \zeta^*-\max \supp \mu \gtrsim 1$
 \end{lemma}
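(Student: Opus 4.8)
Write $\sigma:=\max\supp\mu$ and, for real $\zeta\ge\sigma$, set $g(\zeta):=\frak{r}(\frak{D}_{\frak{b}_\zeta^2}S)$, where $\frak{b}_\zeta$ is the real‑analytic extension of the self‑consistent pseudo‑resolvent through $(\sigma,\infty)$ given by the Stieltjes representation \eqref{St1}; in particular $\frak{b}_\zeta<0$ componentwise on $(\sigma,\infty)$, it extends continuously to $\zeta=\sigma$, and $|\frak{b}_i(\sigma)|\ge(2\sigma_0)^{-1}\gtrsim1$, where $\supp\mu\subset[-\sigma_0,\sigma_0]$ with $\sigma_0$ a model parameter. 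By the contrapositive of Lemma~\ref{lem:specrad}, any real $\zeta>\sigma$ with $g(\zeta)\ge1$ lies in $\cal E^c$. So the plan is to show $g(\zeta)>1$ on an interval $[\sigma,\sigma+\delta_0]$ with $\delta_0\gtrsim1$; then $\zeta^*\ge\sigma+\delta_0$ and the lemma follows.

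\textbf{The easy regime and the lower bound at the edge.} If $\sigma\le0$ the claim is elementary: for every real $\zeta>0$ we have $-1/\frak{b}_i(\zeta)=\zeta+(T\frak{b}(\zeta))_i\le\zeta$ (using $T\ge0$, $\frak{b}_\zeta<0$), hence $|\frak{b}_i(\zeta)|\ge\zeta^{-1}$ and $g(\zeta)\ge\frak{r}(S)\zeta^{-2}$; since $\frak{r}(S)\ge c_0^{1/L}\gtrsim1$ by \eqref{eq:prim}, $g(\zeta)>1$ for $0<\zeta<\sqrt{\frak{r}(S)}$, so $\zeta^*\ge\sqrt{\frak{r}(S)}\gtrsim1$ and $\zeta^*-\sigma\ge\zeta^*\gtrsim1$. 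Assume now $\sigma>0$ (so $T\not\equiv0$). The point is that $g(\sigma)\ge|\varrho|^{-1}>1$. Indeed, since $t_{ij}\ge0$, \eqref{definition of b} is a quadratic vector equation of the type in \cite{AjankiQVE}, and for real $\zeta>\sigma$ the stability operator is invertible, i.e.\ $\frak{r}(\frak{D}_{\frak{b}_\zeta^2}T)<1$ (Lemma~D.1 of \cite{AEKband} relating invertibility of the stability operator to being outside $\supp\rho$); letting $\zeta\downarrow\sigma$ gives $\frak{r}(\frak{D}_{\frak{b}_\sigma^2}T)\le1$, while $\frak{r}(\frak{D}_{\frak{b}_\sigma^2}T)<1$ would force $\sigma\notin\supp\rho=\supp\mu$, a contradiction; hence $\frak{r}(\frak{D}_{\frak{b}_\sigma^2}T)=1$. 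Combining this with the bound $\frak{r}(\frak{D}_wT)\le|\varrho|\,\frak{r}(\frak{D}_wS)$, valid for any non‑negative vector $w$ (the argument of \eqref{bound T by S}--\eqref{bound SSstar by S} applied to $w=\frak{b}_\sigma^2$), yields $g(\sigma)=\frak{r}(\frak{D}_{\frak{b}_\sigma^2}S)\ge|\varrho|^{-1}$.

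\textbf{Propagation to an interval.} Let $v>0$ be the Perron eigenvector of $\frak{D}_{\frak{b}_\sigma^2}S$ (it exists by the primitivity of $S$ from \eqref{eq:prim} and $\frak{b}_i(\sigma)\ne0$), so $\frak{b}_i(\sigma)^2(Sv)_i/v_i=g(\sigma)$ for all $i$. Then by the Collatz--Wielandt lower bound, for any real $\zeta\ge\sigma$,
\[
g(\zeta)=\frak{r}(\frak{D}_{\frak{b}_\zeta^2}S)\ \ge\ \min_i\frac{(\frak{D}_{\frak{b}_\zeta^2}Sv)_i}{v_i}=g(\sigma)\,\min_i\frac{\frak{b}_i(\zeta)^2}{\frak{b}_i(\sigma)^2}\,.
\]
The quantitative input here is the uniform edge regularity of $\frak{b}$: $\|\frak{b}_\zeta-\frak{b}_\sigma\|\le C_H|\zeta-\sigma|^{1/3}$ for $\zeta\in[\sigma,\sigma+1]$, with $C_H$ depending only on the model parameters (this uses Assumption~(2.F) for the regularity of $T$ and (2.D)--(2.E) for the attendant operator bounds; the extreme edge $\sigma$ is a regular square‑root edge, so exponent $1/2$ is in fact available). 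Since $|\frak{b}_i(\sigma)|\gtrsim1$, choosing $\delta_0\in(0,1]$ with $C_H\delta_0^{1/3}\le(1-|\varrho|^{1/4})\min_i|\frak{b}_i(\sigma)|$ — so $\delta_0\gtrsim1$, with implicit constant depending on $|\varrho|$ and the model parameters — gives $\min_i\frak{b}_i(\zeta)^2/\frak{b}_i(\sigma)^2\ge|\varrho|^{1/2}$ for all $\zeta\in[\sigma,\sigma+\delta_0]$, whence $g(\zeta)\ge|\varrho|^{-1}\cdot|\varrho|^{1/2}=|\varrho|^{-1/2}>1$ there. By Lemma~\ref{lem:specrad} each such $\zeta$ lies in $\cal E^c\cap\R$, so $\zeta^*\ge\sigma+\delta_0$, i.e.\ $\dist(\zeta^*,\supp\mu)=\zeta^*-\sigma\ge\delta_0\gtrsim1$.

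\textbf{Main obstacle.} The delicate point is the quantitative edge regularity of $\frak{b}$ invoked in the last step: the QVE \eqref{definition of b} need not satisfy the uniform flatness hypothesis of \cite{AjankiQVE} (indeed $T$ may be ``thin''), so one must verify that the square‑root behaviour at the rightmost edge $\sigma$, and in particular the $N$‑uniformity of the Hölder constant $C_H$, persist under only Assumptions~(2.D)--(2.F). Everything else (the Perron--Frobenius/Collatz--Wielandt manipulation, the comparison $\frak{r}(\frak{D}_wT)\le|\varrho|\frak{r}(\frak{D}_wS)$, and the edge identity $\frak{r}(\frak{D}_{\frak{b}_\sigma^2}T)=1$) is soft and already available from the earlier lemmas and from \cite{AEKband}.
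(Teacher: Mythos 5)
Your overall route is genuinely different from the paper's: you try to show that the spectral-radius condition $\frak{r}(\frak{D}_{\frak{b}_\zeta^2}S)<1$ is violated on a whole interval $[\sigma,\sigma+\delta_0]$ to the right of $\sigma=\max\supp\mu$, which would force $\zeta^*\ge\sigma+\delta_0$. However, the step you yourself flag as "delicate" is in fact a genuine gap, and it is the crux of the argument, not a verification task. Your propagation step needs (i) that the Stieltjes extension $\frak{b}_\zeta$ has a finite, continuous limit $\frak{b}_\sigma$ at the edge with $|\frak{b}_i(\sigma)|\sim 1$, (ii) the edge identity $\frak{r}(\frak{D}_{\frak{b}_\sigma^2}T)=1$, and (iii) an $N$-uniform H\"older (or square-root) bound $\|\frak{b}_\zeta-\frak{b}_\sigma\|\le C_H|\zeta-\sigma|^{1/3}$ up to the edge. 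All three are statements about the QVE \eqref{definition of b} whose coefficient matrix is $T$, and the paper's assumptions impose essentially nothing on $T$ from below: (2.D) is only an upper bound $|t_{ij}|\le|\varrho|\sqrt{s_{ij}s_{ji}}$, (2.E) concerns $S$, and (2.F) is regularity of the rows of $T$ in the index, not any non-degeneracy. The edge-regularity theory of \cite{AjankiQVE} that would give (i)--(iii) with $N$-independent constants requires flatness/primitivity-type hypotheses on the coefficient matrix (here $T$, which may be reducible, sparse, or vanish on large blocks), so it is not available off the shelf; indeed $\frak{b}$ could blow up at $\sigma$ (ruining (i) and (ii)) or have an edge whose H\"older constant degenerates with $N$ (ruining (iii)). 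Without (iii) in particular, your Collatz--Wielandt propagation only yields an interval of $N$-dependent length, which does not give $\zeta^*-\sigma\gtrsim 1$.

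The paper avoids all edge analysis of $\mu$ precisely because of this. It first shows $\zeta^*\gtrsim 1$ from the representation \eqref{St1} together with $\frak{r}(S)\gtrsim 1$, and then works at $\zeta_\delta=\zeta^*+\delta\in\cal{E}$, where Lemma~\ref{lmm:bbound} gives $|\frak{b}(\zeta_\delta)|\lesssim 1$ and where $\frak{r}(\frak{D}_{|\frak{b}(\zeta_\delta)|^2}S)<1$ combined with \eqref{bound T by S}--\eqref{eq:gennonherm} yields the cheap bound $\|(1-\frak{D}_{|\frak{b}(\zeta_\delta)|^2}T)^{-1}\|\le(1-|\varrho|)^{-1}$. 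The quantitative implication (iii) $\Rightarrow$ (v) of Lemma~D.1 in \cite{AEKband} then converts this bounded inverse stability operator at the real point $\zeta_\delta$ directly into $\dist(\zeta_\delta,\supp\mu)\gtrsim 1$, and letting $\delta\downarrow 0$ finishes the proof. If you want to salvage your approach you would need to supply the uniform edge regularity of the $T$-QVE under the stated assumptions (or add hypotheses on $T$); as written, the argument is incomplete at its key step. A smaller point: your use of the contrapositive of Lemma~\ref{lem:specrad} tacitly identifies the Stieltjes continuation of $\frak{b}$ through the gap $(\sigma,\infty)$ with the function $\frak{b}$ of Proposition~\ref{prp:Solution of vector equation}; this needs a short continuation argument along a real ray avoiding both $\supp\mu$ and $\cal{E}^c$ (it is fixable, but should be said).
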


  \begin{proof}[Proof of Lemma \ref{lmm:zeta* away from zero}]
 We first show a non-effective version of the lemma, namely that $\zeta^*$ lies to the right of $\supp \mu$, i.e.  $\zeta^* \ge \max \supp \mu$.  For this we view 
the extraspectral Dyson equation  \eqref{definition of b} 
 as a special case of the general Dyson equations studied in \cite{AEKband} and will apply Lemma~D.1 from \cite{AEKband} with the self-energy operator $r \mapsto T\2r$ and $a=0$. In the notation from \cite{AEKband} the Dyson equation is formulated on the commutative von Neumann algebra $\cal{A}=\C^N$ with entry-wise multiplication. 

 Let $\zeta \ge  \zeta^*$. Since $\cal{R}$ is open and $\frak{b}$ is analytic by definition 
 in Proposition~\ref{prp:Solution of vector equation}, 
we can apply the implication (iv) $\Rightarrow$ (v)   
of Lemma~D.1 from \cite{AEKband} 
and see that $\zeta \in \C \setminus \supp \mu$, and thus $\wt  \zeta^* \ge \max \supp \mu$. 
 Furthermore, $\mathfrak{b}_i(\zeta) \not = 0$ from \eqref{definition of b} and $\frak{b}(\zeta)<0$ for sufficiently large $\zeta>0$ by Property~4 in Proposition~\ref{prp:Solution of vector equation}.
Therefore,
   \begin{equation}\label{bneg}\mathfrak{b}(\zeta) < 0\,, \qquad \zeta > \zeta^*\,. \end{equation}

We will now show that $\zeta^* \gtrsim 1$. 
Since $\mu_i$ is a symmetric probability measure, we have $\supp \mu \subset [- \zeta^*, \zeta^*]$. By the representation \eqref{St1} this implies 
 \[
\abs{ \frak{b}_i(\zeta) + \zeta^{-1}}\le \int_\R \mu_i( {\rm d} x)\frac{ \abs{x}}{\zeta\abs{x-\zeta}} \le \frac{2\2 \zeta^*}{\zeta^2}\,, \qquad \zeta \ge 2 \2 \wt \zeta^*\,,
 \]
 and, thus, the lower bound $\abs{\frak{b}(\zeta)}\ge \frac{1}{2\1\zeta}$ for all $\zeta \ge 4 \2 \wt  \zeta^*$. Since $\zeta \in \cal{R}$, we have that $\frak{r}(\frak{D}_{4\abs{\zeta}^2}^{-1}S)\le\frak{r}(\frak{D}_{\abs{\frak{b}(\zeta)}^2}S)<1$. 
  Since $\frak{r}(\frak{D}_{4\abs{\zeta}^2}^{-1}S) = \frak{r}(S)/(4|\zeta|^2)$,  
 this implies the lower bound $\zeta \gtrsim 1$, and hence $ \wt  \zeta^* \gtrsim 1$. 
 
Finally, we prove the effective bound  on $\dist(\zeta^*, \supp \mu)$. First we note that $\abs{\frak{b}(\zeta_\delta)}\lesssim 1$ with $\zeta_\delta:=\zeta^*+\delta$ for any $\delta>0$ due to Lemma~\ref{lmm:bbound} and the lower bound on $\zeta^*$. Since $\zeta_\delta > \max \supp \mu$ the $\R^N$-valued function $\frak{b}$ is analytic around this point. Furthermore, $\zeta_\delta \in \cal{R}$ implies $\frak{r}(\frak{D}_{\abs{\frak{b}(\zeta_\delta)}^2}S)<1$ and, thus, $\frak{r}(\frak{D}_{\abs{\frak{b}(\zeta_\delta)}^2}T)<\abs{\varrho}$. Therefore, $\norm{(1-\frak{D}_{\abs{\frak{b}(\zeta_\delta)}^2}T)^{-1}} \le \frac{1}{1-\abs{\varrho}}$. From the implication (iii) $\Rightarrow$ (v) in  Lemma~D.1 of \cite{AEKband} we now conclude that $\dist(\zeta_\delta, \supp \mu)\gtrsim 1$. Since $\delta>0$ 
 was arbitrary, the statement of the lemma follows. 
 \end{proof}

Since Theorem \ref{thm:mainthmgen} is stated for $\zeta \in \cal{R}^{\eps}$  we will need to control the distance between these sets and $\cal{R}^c$. The following proposition will give an effective lower bound on this distance.

\begin{proposition}\label{prp:Delta ok on path}
 \[ \min_{\zeta: \abs{\zeta} \ge \zeta^*+\eps}\Delta_\zeta\ge C \eps\] for all $\eps \in (0,1/C)$, where $C>0$ is a constant, depending on 
the model parameters.  
\end{proposition}
\begin{proof}
Let  $\zeta \in \R \cap \cal{R}$ such that $\zeta > \zeta^*$.  Since $\mathfrak{b}(\zeta)= \ol{\mathfrak{b}(\zeta)}$, the matrix $\mathfrak{D}_{\mathfrak{b}(\zeta)^2} S$ has positive entries and, thus, its spectral radius coincides with its Perron-Frobenius eigenvalue.  
We denote by prime $'$ the derivative $\frac{d}{d\zeta}$
 and use first order perturbation theory to get  
\[ \mathfrak{r}(\mathfrak{D}_{\mathfrak{b}(\zeta)^2} S)' = \langle v_l , 2 \mathfrak{D}_{\mathfrak{b}(\zeta) \mathfrak{b}(\zeta)'} S   v_r \rangle , \] 
where $v_l,v_r$ and the left and right Perron-Frobenius eigenvectors of $\mathfrak{D}_{\mathfrak{b}(\zeta)^2} S$, respectively, normalized so that $\langle v_l,v_r \rangle = 1$. 
The bound for large values of $\abs{\zeta}$ is clear due to the behavior of $\frak{b}(\zeta)$ as $\zeta \to \infty$  in Proposition~\ref{prp:Solution of vector equation}. Thus we consider $\abs{\zeta}\lesssim 1$. By Lemma~\ref{lmm:zeta* away from zero} we also have $\zeta>\zeta^* \gtrsim 1$. 
From \eqref{bbound}  we have $-\mathfrak{b}(\zeta)=\abs{\mathfrak{b}(\zeta)}\sim 1$. 
Thus, the uniform primitivity assumption \eqref{eq:prim} also holds for $\mathfrak{D}_{\mathfrak{b}(\zeta)^2} S$. By standard Perron-Frobenius theory we therefore conclude $v_l,v_r \sim 1$.   Differentiating \eqref{definition of b} gives ${\mathfrak{b}(\zeta)'=(\mathfrak{D}_{\mathfrak{b}(\zeta)}^{-2}-T)^{-1}1}$. We note the inverse is well defined by \eqref{eq:gennonherm}.  Expanding the inverse yields 
\[\mathfrak{b}(\zeta)' = \sum_{k=0}^\infty (\mathfrak{D}_{\mathfrak{b}(\zeta)}^2 T)^k \mathfrak{b}(\zeta)^2 >  \mathfrak{b}(\zeta)^2 \sim 1 . \]
where we have used that $t_{ij}\geq 0$. Combining all these estimates, we see  $-\mathfrak{r}(\mathfrak{D}_{\mathfrak{b}(\zeta)^2} S)'  \gtrsim 1 $. 
The conclusion of the proof follows by noting that $\Delta_\zeta\ge \Delta_{\abs{\zeta}}$ from Lemma \ref{lem:bboundcirc} and that $\partial_\zeta\Delta_\zeta \gtrsim 1$ at $\zeta >\zeta^*$.

\end{proof}

%Additionally, from the previous lemmas, we also get the following effective bound on $\Delta_\zeta$:
%
%\begin{proposition}\label{prp:Delta ok on path}
% \[ \min_{\zeta: \abs{\zeta} \ge \zeta^*+\eps}\Delta_\zeta\ge C \eps\] for all $\eps \in (0,1/C)$, where $C>0$ is a constant, depending on 
%the model parameters.  
%\end{proposition}
%\begin{proof}
% \end{proof}

With the bounds on the location of $\cal{R}$ at hand, we now turn to the proof of Theorem \ref{thm:heatelliptictype}.

\begin{proof}[Proof of Theorem \ref{thm:heatelliptictype}]
As in Section~\ref{sec:longellipse} we have the squared norm of the solution to \eqref{eq:ODEx}, when averaged over the initial conditions  given by
\begin{align*} \E_{u_0} \|u_t\|_2^2 &= \tr_N e^{t(gX^*-I)}e^{t(gX-I)} \\
&=   \left( \frac{1}{2\ii \pi} \right)^2 \oint_{\gamma} d \zeta_1  \oint_{\ol \gamma} d \ol{\zeta}_2 e^{t(g\zeta_1 + g\ol{\zeta_2} -2)  }  \tr_N (X-\zeta_1)^{-1}   (X^*-\zeta_2)^{-1}  .\end{align*} 
We take $\gamma$ to be the curve $\zeta= \zeta^* (1+CN^{-c \eps})\ee^{\ii \varphi}$, $\varphi \in [0,2\pi]$ and $\ol{\gamma}$ is the same curve traversed in the clockwise direction. By Proposition~\ref{prp:Delta ok on path}  we have the lower bound $\Delta_{\zeta} \ge N^{-c\eps}$   for any $\zeta\in\gamma$  if $C>0$ is chosen large enough.

Applying Theorem~\ref{thm:mainthmgen} we have
\[ \E_{u_0} \|u_t\|_2^2 =  \left( \frac{1}{2\ii \pi} \right)^2 \oint_{\gamma} d \zeta_1  \oint_{\ol \gamma} d \ol{\zeta}_2 e^{t(g\zeta_1 + g\ol{\zeta_2} -2)  }  K(\zeta_1,\zeta_2) + \epsilon_N  \]
where $\epsilon_N \leq N^{-1/2+\epsilon}$ for any $\epsilon>0$ and $t \leq N^{c\epsilon}$ with high probability as in \eqref{epps}.
Computation of this integral relies on understanding the singularity of  $K(\zeta_1,\zeta_2)$ for $\zeta_1,\zeta_2$ near $\zeta^*$, which  is determined by the behavior of  the isolated eigenvalue of  $L(\zeta_1,\ol{\zeta}_2) := \frak{D}_{\frak{b}_1\ol{\frak{b}}_2}^{-1}-S $ near $0$, where $\frak{b}_i=\frak{b}(\zeta_i)$. 

In this proof we holomorphically extend  $\frak{b}(\zeta)$ to a neighborhood of $\zeta^*$. This is possible due to Lemma~\ref{lmm:zeta* away from zero} and the Stieltjes transform representation \eqref{St1}. We still denote the extension by $\frak{b}(\zeta)$. Since $\zeta^*$ is the unique point in $\cal{R}^c$ with maximal real part (cf. Proposition~\ref{prp:rightmost}) 
 we also have $\zeta^* \in \partial \cal{R}$. By  Proposition~\ref{prp:Solution of vector equation} we conclude that $\Delta_{\zeta^*}=0$. Otherwise the holomorphic extension of $\frak{b}$ around $\zeta^*$ would ensure solvability of \eqref{definition of b} with side condition \eqref{sidecond}  in a neighborhood of $\zeta^*$ and, thus, $\zeta^*$ would be in the interior of $\cal{R}$. Therefore, zero is an eigenvalue of $L(\zeta^*,\zeta^*) = \frak{D}_{\abs{\frak{b}(\zeta^*)}^2}^{-1}(1-\frak{D}_{\abs{\frak{b}(\zeta^*)}^2}S)$. 

Furthermore, due to Lemma~\ref{lemm:quanprim} applied to $R=\frak{D}_{\abs{\frak{b}(\zeta^*)}^2}S$ the eigenvalue at zero is isolated by a spectral gap  
 from the rest of the spectrum of $L(\zeta^*,\zeta^*)$.  In particular, there exists a contour separating the   eigenvalue at $0$
   from the rest of the spectrum, with the resolvent \eqref{eq:R resolvent bound}  effectively controlled along it. Using analytic perturbation theory we conclude that   $L(\zeta_1,\ol{\zeta}_2)$ still has an isolated eigenvalue $\lambda=\lambda(\zeta_1,\ol{\zeta}_2)$ that is closest to zero for $\zeta_1$ and $\zeta_2$ sufficiently close to $\zeta^*$ and satisfies 
\[
\lambda(\zeta^*,\zeta^*)=0\,.
\]
The eigenvalue  $\lambda$ and all derived quantities in the following depend analytically on $\zeta_1$ and $\ol{\zeta}_2$.    Let  $P(\zeta_1, \ol{\zeta}_2):= \scalar{v_l}{\1\cdot\1} v_r$ the corresponding rank $1$ projection, where $v_r=v_r(\zeta_1, \ol{\zeta}_2)$ and $v_l=v_l(\zeta_1, \ol{\zeta}_2)$ are the right and left eigenvectors of $L(\zeta_1, \ol{\zeta}_2)$ with respect to eigenvalue $\lambda$, respectively, normalized so that $\langle v_l,v_r \rangle =1$. We also set $Q(\zeta_1, \ol{\zeta}_2):=1-P(\zeta_1, \ol{\zeta}_2)$. For ease of readability we often drop the dependence on the argument from $v_l$, $v_r$, $P$, and $Q$. In \eqref{eq:Defelltypecoeff}, \eqref{eq:elltypecoeff} and in Lemma~\ref{lemma:derivativescorr}, below, these quantities are evaluated at $\zeta^*$. To study the singularity we consider the equation $\lambda(\zeta_1,\ol{z}_2(\zeta_1))=0$ with an analytic function  $\ol{z}_2$  around its solution $\lambda(\zeta^*,\zeta^*)=0$.  Analyticity of $\ol{z}_2$ is a consequence of the analyticity of $\lambda$ in both its arguments. 
In what follows we use $\partial_1= \partial_{\zeta_1}$ and  $\ol{\partial}_2= \partial_{\ol{\zeta_2}}$. 
Before proceeding we  define the coefficient  $A(S,T)$ that appears in \eqref{besseldecaygenell}. Let 
\begin{equation} \label{eq:Defelltypecoeff}   A(S,T)  :=   \frac{\avg{v_l}\avg{v_r} }{\ol{\partial}_{2}\lambda\scalar{v_l}{v_r}\sqrt{ \partial_1^2\ol{z}_2}} \,,   \end{equation} 
where the right side is evaluated at $\zeta_1=\ol{\zeta_2} = \zeta^*$. Here
 $v_l=v_l(\zeta^*,\zeta^*)$, $v_r=v_r(\zeta^*,\zeta^*)$ are the unique (up to scaling) left and right eigenvectors with
 positive entries of $L = \frak{D}_{\abs{\frak{b}}^2}^{-1}-{S}=\frak{D}_{\frak{b}}^{-2}-{S}$ corresponding to its zero eigenvalue, where $\frak{b} = \frak{b}(\zeta^*)$ is real. 
 In the following lemma we provide an alternative formula for \eqref{eq:Defelltypecoeff} that also shows it is positive.

\begin{lemma}  \label{lemma:derivativescorr} 
The coefficient $A(S,T)$ from \eqref{eq:Defelltypecoeff} satisfies
\begin{equation} \label{eq:elltypecoeff}   A(S,T)   =    \frac{\avg{v_l}\avg{v_r} }{ \scalar{v_l}{v_r} \sqrt{  2 \langle v_l v_r/\abs{\frak{b}}^2 , ( {1}+F )({1}-F)^{-1}  x^2   \rangle   \langle v_l v_r x /\abs{\frak{b}}^2\rangle }} ,    \end{equation} 
where the right side is again evaluated at $\zeta_1=\ol{\zeta_2} = \zeta^*$. Here we used the notation
\bels{def of F with T}{
F:=\frak{D}_{\abs{\frak{b}}}T\frak{D}_{\abs{\frak{b}}}\,, \qquad x:=(1-F)^{-1}\abs{\frak{b}}\,. 
}
 Furthermore, at  $\zeta_1=\ol{\zeta_2} = \zeta^*$ we have
\[ \ol{\d}_{2} \lambda  > 0, \qquad  \d_1 \ol{z}_2 = - 1, \qquad  \d_1^2 \ol{z}_2 > 0, \]
 as well as $\abs{\ol{\d}_{2} \lambda} \sim 1 $ and $\abs{\d_1^2 \ol{z}_2} \sim 1$. In particular, $A(S,T) \sim 1$. 
\end{lemma}

We will prove this lemma after the conclusion of the proof of Theorem \ref{thm:heatelliptictype}, which we now return to.
Let $\Gamma_{\eps,\delta}=\{ \tau_\eps\2\ee^{\ii \varphi}: \varphi \in [-\delta,\delta]\}$ with $\tau_\eps:=\zeta^* (1+C\1N^{-c \epsilon})$. Then since the operator $L(\zeta_1,\ol{\zeta}_2)$ is invertible for  $|\zeta_1|,|\zeta_2| > \zeta^*$ with the norm of its  inverse bounded by $\Delta_{(\zeta_1, \zeta_2)}^{-1}$ and by the lower bound on $\Delta_{(\zeta_1, \zeta_2)}$ from Proposition~\ref{prp:Delta ok on path} we have  
\bes{
 \oint_\gamma \frac{\dd \zeta_1}{2\pi \ii} \oint_{\ol{\gamma}} \frac{\dd \ol{\zeta}_2}{2\pi \ii}\ee^{t(g\zeta_1 +g \ol{\zeta}_2-2)}K(\zeta_1,\zeta_2)
=&
\int_{\Gamma_{\eps, \delta_1}} \msp{-8}\frac{\dd \zeta_1}{2\pi \ii} \int_{\ol{\Gamma}_{\eps,\delta_2}} \msp{-8}\frac{\dd \ol{\zeta}_2}{2\pi \ii}\ee^{t(g\zeta_1 +g \ol{\zeta}_2-2)}K(\zeta_1,\zeta_2)\\
&+\ord(N^{c \1\eps}\ee^{ t(g\tau_\eps(\cos\delta_1+\cos\delta_2)-2)}),
}
as long as $\delta_1,\delta_2 \sim 1 $. In particular, the error  is exponentially small in $t$.

We now break the kernel $K(\zeta_1,\zeta_2)$ into two parts, corresponding to the spectral projection $P$ associated to the eigenvalue $\lambda(\zeta_1,\overline{\zeta}_2)$ and the complement projection $Q=1-P$, yielding
\[ K(\zeta_1,\zeta_2)= \frac{\avg{P(\zeta_1,\ol{\zeta}_2)1}}{\lambda(\zeta_1,\overline{\zeta}_2)} + \scalar{1}{QL(\zeta_1,\ol{\zeta}_2)^{-1}Q1} \,.   \]
Here  ${L}^{-1}{Q}$ is uniformly bounded on an $\epsilon' \sim 1$ neighborhood of $\zeta^*$ and can  be analytically extended in $\zeta_1,\ol{\zeta}_2$ to $\D_{\eps'}(\zeta^*) = \{\zeta : \abs{\zeta-\zeta^*}<\eps'\}$  because of Assumption~(2.E). Thus the integral of the second term is $O(e^{-2t(1-g \zeta^* (1-\epsilon')})$.										

We now consider the leading term from the projection onto $\lambda(\zeta_1,\overline{\zeta}_2)$, namely
\[
\int_{\Gamma_{\eps, \delta_1}} \frac{\dd \zeta_1}{2\pi \ii} \int_{\ol{\Gamma}_{\eps, \delta_2}} \frac{\dd \ol{\zeta}_2}{2\pi \ii}\ee^{t(g\zeta_1 +g \ol{\zeta}_2-2)}\ \frac{\avg{P(\zeta_1,\ol{\zeta}_2)1}}{\lambda(\zeta_1,\overline{\zeta}_2)}\,.
\]

\begin{figure}[ht] 
\centering
  \includegraphics[width=2in]{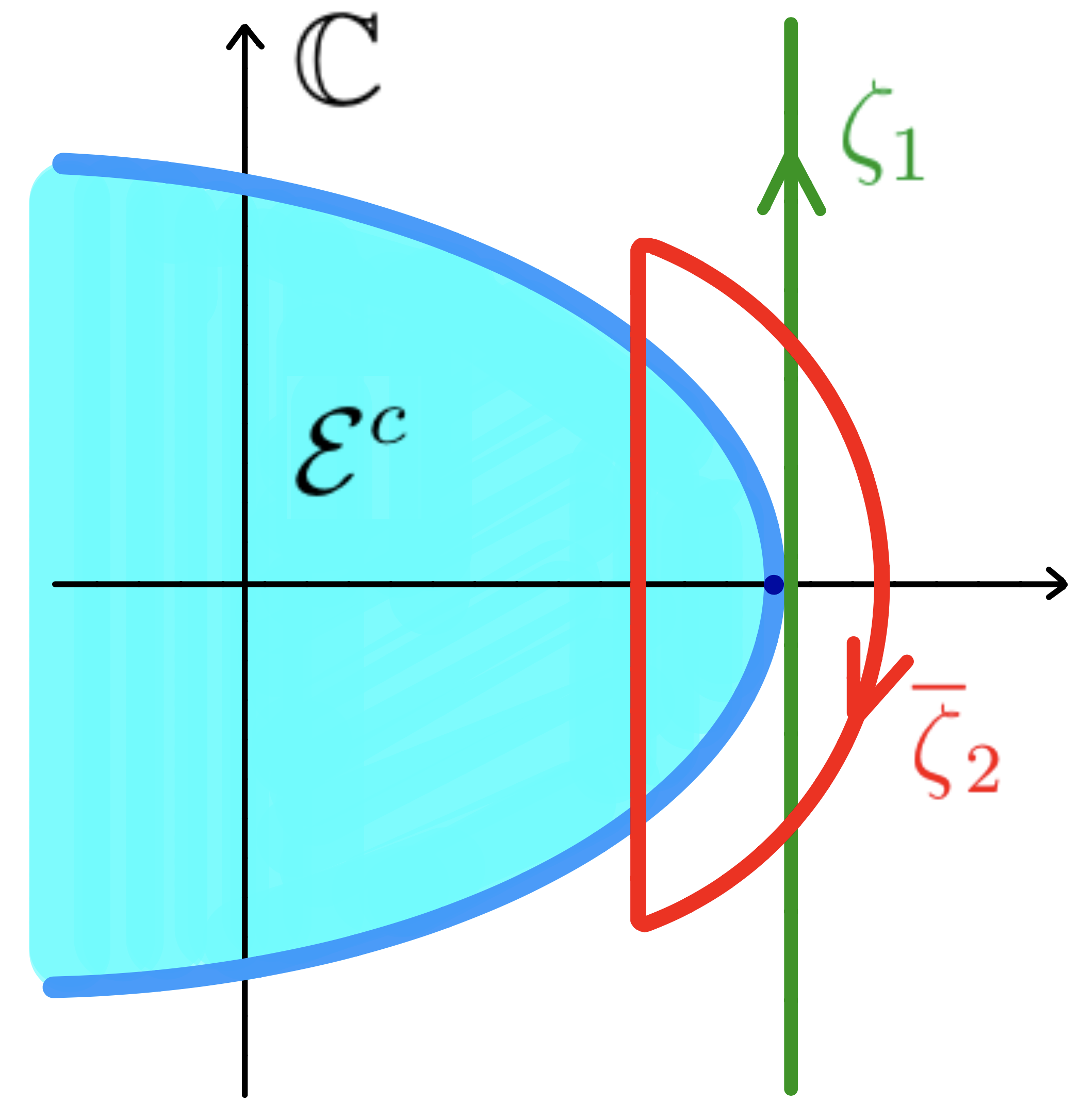}
\caption{Contours of integration}
\label{contour}
\end{figure}

We  set $\delta_1 = 3 \sqrt{\eps'}$ and will close the $\ol{\zeta}_2$-contour to pick up a residue. From Lemma~\ref{lemma:derivativescorr}, $\partial_1 \ol{z}_2(\zeta^*)=-1$. In particular, $\ol{z}_2(\zeta^*+\Delta) = \zeta^* -\Delta + \ord(\abs{\Delta}^2)$ and $\re( \ol{z}_2(\zeta_1)-\zeta^*) =\ord(\eps')$ for any $\zeta_1 \in \Gamma_{\eps, 3\sqrt{\eps'}}$. Thus  we choose $\delta_2\ge C \sqrt{\eps'}$ for sufficiently large $C>0$ and can close the $\ol{\zeta}_2$-contour by adding the line $\tau_\eps[\ee^{\ii \delta_2},\ee^{-\ii \delta_2}]$, once again adding an exponentially small in $t$ error term. Then the inner integral is computed by the residue formula
\[ 
\int_{\Gamma_{\eps, 3\sqrt{\eps'}}} \frac{\dd {\zeta_1}}{2\pi \ii}\ee^{t(g\zeta_1 +g \ol{z}_2(\zeta_1)-2)}\,\frac{\avg{P(\zeta_1,\ol{z}_2(\zeta_1))1}}{\partial_{\ol{\zeta}_2}\lambda(\zeta_1,\ol{z}_2(\zeta_1))}\,.
\]

Expanding $\ol{z}_2$ in the exponent up to second order, once again using that $\partial_1 \ol{z}_2(\zeta^*)=-1$ and then shifting the contour to $\zeta_1= \zeta^* +\ii\Delta$ with $\Delta \in \R$ gives 

\begin{align*}
\int_{\R} \frac{\dd {\Delta}}{2\pi}\ee^{t(2g\zeta^* -g(\partial_1^2\ol{z}_2)\Delta^2/2 -2 )}\,\frac{\avg{v_l}\avg{v_r}}{\ol{\partial}_{2}\lambda}
&+\ord\big((gt)^{-1}\ee^{2t(\zeta^*g-1)}\big)\\
&=  \frac{\avg{v_l}\avg{v_r}\ee^{2t(\zeta^* g-1)}}{\ol{\partial}_{2}\lambda \sqrt{2\pi \partial_1^2\ol{z}_2}}\Big(\frac{1}{\sqrt{gt}}+\ord((gt)^{-1}) \Big)\,,
\end{align*}
where  $\ol{\partial}_{2}\lambda$, and $\partial_1^2\ol{z}_2$ are evaluated at $\zeta^*$, and by Lemma~\ref{lemma:derivativescorr}, the coefficient $A(S,T)$ multiplying the function $\ee^{2t(\zeta^*g-1)}/\sqrt{2\pi gt}$ on the right hand side is $\sim 1$. Furthermore, we used the normalization $\scalar{v_l}{v_r}=1$. 
 We  applied a stationary phase approximation which uses smoothness of the phase and the integrand in $\zeta_1$, where all derivatives are bounded by $N$-independent constants. This effectively controlled smoothness follows from analytic perturbation theory of an isolated non-degenerate eigenvalue. 

This completes the proof of Theorem \ref{thm:heatelliptictype}.
\end{proof}

We conclude by proving the technical lemma used above.

\begin{proof}[Proof of Lemma~\ref{lemma:derivativescorr}]

At $\zeta^*$ we have $\mathfrak{b}=\ol{\mathfrak{b}}$ with $ \mathfrak{b}= \mathfrak{b}(\zeta^*)$ and also $\frak{b}'=\partial_1 \mathfrak{b}(\zeta_1)|_{\zeta_1=\zeta^*} = \ol{\partial_2} \2\ol{\mathfrak{b}(\zeta_2)}|_{\zeta_2=\zeta^*} $.  Dividing \eqref{definition of b} by $\frak{b}$ and then  differentiating with respect to $\zeta$  we find
\bels{B derivatives}{
\mathfrak {b}'=(\frak{D}_{\frak{b}}^{-2}-{T})^{-1}1\,, \qquad \mathfrak {b}'' = 2(\frak{D}_{\frak{b}}^{-2}-{T})^{-1}\big[(\frak{b}')^2/ \frak{b}^{3}\big]\,,
}
which we apply to compute the following derivatives of $L(\zeta_1, \ol{\zeta}_2)$ at $\zeta_1=\ol{\zeta}_2 = \zeta^*$:
\bels{derivatives of L}{
\partial_1{L}=\ol{\partial}_2{L}=-\frak{D}_{\frak{b}'/\frak{b}^{3}}\,, \quad
 \partial_1\ol{\partial}_2{L}=\frak{D}_{(\frak{b}')^2/\frak{b}^{4}} \,,\quad
 \partial_1^2{L}=\ol{\partial}_2^2{L}=\frak{D}_{(2(\frak{b}')^2-\frak{b}\2\frak{b}'')/\frak{b}^4}\,.
 }
We compute the derivatives of $\lambda$ from the  standard first and second order perturbation theory formulas  of an isolated eigenvalue, namely   
\bels{lambda derivatives}{ \begin{gathered}\partial_{a} \lambda = \langle v_l, \partial_{a} {L}   v_r\rangle\,,\\
\partial_{a} \partial_{b} \lambda = \langle  v_l, \partial_{a} \partial_{b} {L}  v_r \rangle + \langle  v_l, \partial_{a} {L} {Q}( \lambda - {L})^{-1} {Q}  \partial_{b} {L}   v_r \rangle + \langle  v_l, \partial_{b} {L} {Q}( \lambda - {L})^{-1} {Q}  \partial_{a} {L}   v_r \rangle\,, \end{gathered}
}
where $\partial_{a},\partial_{b}$ are either $\partial_{1}$ or $\ol{\partial}_{2}$.
The derivatives of $\ol{z}_2$ are then computed by the implicit function theorem applied to the equation $\lambda(\zeta_1, \ol{z}_2(\zeta_1))=0$ to get
\begin{equation} \label{eq:zderiv}
\partial_1\ol{z}_2 = -\frac{\partial_1\lambda}{\ol{\partial}_2\lambda}=-1\,,\qquad \partial_1^2\ol{z}_2 =-\frac{\partial_1^2\lambda+2 (\partial_1\ol{\partial}_2\lambda)\partial\ol{z}_2+(\ol{\partial}_2^2\lambda)(\partial\ol{z}_2)^2}{\ol{\partial}_2\lambda}=2\frac{\partial_1\ol{\partial}_2\lambda-\partial_1^2\lambda}{{\partial}_1\lambda}\,,
\end{equation}
where we used $\partial_1\lambda=\ol{\partial}_2\lambda$  and $\partial_1^2\lambda=\ol{\partial}_2^2\lambda$. Inserting \eqref{derivatives of L} and \eqref{lambda derivatives} yields 
\[
\ol{\partial}_2 \lambda = \avg{v_l v_r x/\abs{\frak{b}}^2}\,, \qquad 
\partial_1^2\ol{z}_2 = 2\frac{\scalar{v_l v_r/\abs{\frak{b}}^2}{(1+F)(1-F)^{-1}x^2}}{\avg{v_l v_r x/\abs{\frak{b}}^2}}\,,
\]
implying the formula \eqref{eq:elltypecoeff} for  the coefficient $A(S,T)$. The norm of the symmetric matrix $F$ from \eqref{def of F with T} is strictly smaller than $1$ since $\norm{F} \le \abs{\varrho} \norm{\frak{D}_{\abs{\frak{b}}}\wt{S}\frak{D}_{\abs{\frak{b}}}} \le \abs{\varrho} \frak{r}(\frak{D}_{\abs{\frak{b}}}^2 S) = \abs{\varrho} $ with $\wt{s}_{ij}= \sqrt{s_{ij}s_{ji}}$ (compare the argument for \eqref{estimate for b2 tilde S}). Together with $\abs{\frak{b}} \sim 1$ and $v_l \sim v_r \sim 1$ from Lemma~\ref{lemm:quanprim} we see that $x \sim 1$ and that $\abs{\ol{\partial}_2 \lambda} \sim \abs{\partial^2\ol{z}_2}\sim 1$, finishing the proof of the lemma.  
\end{proof}

\begin{appendix}

\section{Computation of $\zeta^*$ for Example 3 of Section \ref{sec:examples}}

In our block example~\eqref{Ex3}, 
 the solution $\frak{b}$ to~\eqref{definition of b}
  is constant along the indices corresponding to each block, so it suffices to consider the block-constant solution $\frak{b}(\zeta) = (\frak{b}^1 , \frak{b}^2) \in \C^{N/2} \oplus \C^{N/2}$,
  %  $\frak{b}^1 := \frac{2}{N} \sum_{i=1}^{N/2} \frak{b}_i$ and $\frak{b}^2 := \frac{2}{N} \sum_{i=N/2+1}^{N} \frak{b}_i$ which 
whose coordinates satisfy the equations:
\[ 1  + (\zeta + \frac{\rho}{2} \frak{b}^1(\zeta) )\frak{b}^1(\zeta) = 0 \]
and 
\[ 1 + \zeta \frak{b}^2(\zeta) = 0. \]

Note that the solution to the first equation is not unique, but Proposition \eqref{prp:Solution of vector equation}-\ref{prop:Asymptotics} specifies that $\zeta \frak{b}^1(\zeta) \to -1$ as $|\zeta| \to \infty$, so we choose the solution:
\[ \frak{b}^1(\zeta) = \frac{ -\zeta + \sqrt{\zeta^2 - 4\rho  }}{2 \rho}, \]
where the square root is chosen with a branch cut along the segment $[-2\sqrt{\varrho},2\sqrt{\varrho}]$ so that $\sqrt{\zeta^2-4 \varrho} - \zeta \to 0$ as $|\zeta| \to \infty$.

Then the set $\mathcal{R}$ is all $\zeta$ such that the operator $\frak{D}_{\abs{\frak{b}(\zeta)}^2} S$ has spectral radius less than 1, which is equivalent to the operator
\[  \begin{pmatrix} |\frak{b}^1(\zeta)|^2 & 0 \\ 0 & |\frak{b}^2(\zeta)|^2 \end{pmatrix}  
\begin{pmatrix} 1 &\sigma^2_{12} \\ \sigma^2_{21} & \sigma^2_{22} \end{pmatrix}\]
having spectral radius less than 1.%, where we set $s^{ij} := N \sigma^2_{ij}$. 

%Noting that $1/b_2(\zeta) = -\zeta = \rho b_1(\zeta) + b_1(\zeta)^{-1}$ 
By the continuity of $\frak{b}$, the boundary of $\mathcal{R}$ is determined by  all the $\zeta$'s
 such that 1 the largest eigenvalue of this operator, which furthermore implies
\[ \det\left(   \begin{pmatrix} |\frak{b}^1(\zeta)|^{-2} & 0 \\ 0 & |\frak{b}^2(\zeta)|^{-2} \end{pmatrix}  -
\begin{pmatrix} 1 &\sigma^2_{12} \\ \sigma^2_{21} & \sigma^2_{22} \end{pmatrix} \right) = 0\]
holds.
Rearranging gives:
%The eigenvalues of the this matrix are less than 1 if:
\begin{equation}\label{cub}
% \left(\frac{1}{2} - |\frak{b}^1(\zeta)|^{-2} \right)\left(\frac{\sigma^2_{22}}{2} - |\frak{b}^2(\zeta)|^{-2}\right) = \frac{1}{4} \sigma^2_{12}\sigma^2_{21}. 
 \left(\frac{1}{2} - |\frak{b}^1(\zeta)|^{-2} \right)\left( \sigma^2_{22} - |\frak{b}^2(\zeta)|^{-2}\right) =\sigma^2_{12}\sigma^2_{21}. 
\end{equation}

Noting that $1/\frak{b}^2(\zeta) = -\zeta = \frac{\rho}{2} \frak{b}^1(\zeta) + \frak{b}^1(\zeta)^{-1}$ we can solve for $\frak{b}^1(\zeta)$ and then determine which $\zeta$ correspond to the boundary. 
 
In Section~\ref{sec:ellcorr} we showed that if $\rho \geq 0$ then $\zeta^{*} = \max_{\zeta \in \cal{R}^c}\zeta$ is real. Furthermore from the symmetry of the equation we have that $\frak{b}(\zeta)$ is real and negative for $\zeta >0$.

Using that $|\frak{b}(\zeta^*)|= -\frak{b}(\zeta^*)$, we rearrange the equation~\eqref{cub} to:
\[-\frac{\rho^2}{8} |\frak{b}^1|^6  + (-\frac{\rho}{2}+\frac{\rho^2}{4} + \frac{\sigma^2_{22}}{2} - \sigma^2_{12} \sigma^2_{21} ) |\frak{b}^1|^4 + (\rho -\sigma^2_{22} -\frac{1}{2})|\frak{b}^1|^2   + 1 =0,
\]
%\[-\frac{\rho}{4} |\frak{b}_1|^6  + (\frac{\rho}{2} + \frac{s^{22}}{4} - \frac{s^{12} s^{21}}{4}) |\frak{b}_1|^4 - (  \frac{s^{22}}{2}-\frac{1}{2})|\frak{b}_1|^2   + 1 ==0\]
%\[ -\rho^2 |b|^6  + (\rho^2 + s^{22} - s^{12} s^{21}) |b|^4 + (2 \rho - s^{22}-1)|b|^2   + 1 ==0  \]
which is a cubic polynomial in $|\frak{b}^1|^2$. Then let $\frak{b}^*$ be the negative square root of the root of the polynomial:
\[-\frac{\rho^2}{8} x^3  + (-\frac{\rho}{2}+\frac{\rho^2}{4} + \frac{\sigma^2_{22}}{2} -  \sigma^2_{12} \sigma^2_{21}) x^2 + (\rho - \sigma^2_{22} -\frac{1}{2})x   + 1 =0\]
such that 
\[   \begin{pmatrix} (\frak{b}^*)^2 & 0 \\ 0 & (\frac{ \rho}{2} \frak{b}^* - \frac{1}{\frak{b}^{*}})^{-2} \end{pmatrix}  
\begin{pmatrix} 1 &\sigma^2_{12} \\ \sigma^2_{21} & \sigma^2_{22} \end{pmatrix}\]
has spectral radius 1. Note that
the other roots correspond to 1 being the smallest and not the largest (in modulus) eigenvalue of this operator.

The left most point of the support  of $\cal{R}^c$ is then given by \[\zeta^* = -\frac{ \rho}{2} \frak{b}^* - \frac{1}{\frak{b}^{*}}  .\]

\end{appendix}

\bibliographystyle{abbrv}
\bibliography{2pt}

\end{document}